
\documentclass[11pt]{amsart}

\usepackage[a4paper,hmargin=3.5cm,vmargin=4cm]{geometry}
\usepackage{amsfonts,amssymb,amscd,amstext}
\usepackage{graphicx}
\usepackage[dvips]{epsfig}
\usepackage{pstricks,pst-plot}


\usepackage{fancyhdr}
\pagestyle{fancy}
\fancyhf{}

\input xy
\xyoption{all}


\usepackage{times}

\usepackage{enumerate}
\usepackage{titlesec}
\usepackage{mathrsfs}

\pretolerance=2000
\tolerance=3000


\headheight=13.03pt
\headsep 0.25cm
\topmargin 0.5cm
\textheight = 49\baselineskip
\textwidth 14cm
\oddsidemargin 1cm
\evensidemargin 1cm

\setlength{\parskip}{0.5em}

\titleformat{\section}
{\filcenter\bfseries\large} {\thesection{.}}{0.2cm}{}
\titleformat{\subsection}[runin]
{\bfseries} {\thesubsection{.}}{0.15cm}{}[.]
\titleformat{\subsubsection}[runin]
{\em}{\thesubsubsection{.}}{0.15cm}{}[.]

\usepackage[up,bf]{caption}


\newtheorem{theorem}{Theorem}[section]
\newtheorem{proposition}[theorem]{Proposition}

\newtheorem{lemma}[theorem]{Lemma}
\newtheorem{corollary}[theorem]{Corollary}

\theoremstyle{definition}
\newtheorem{definition}[theorem]{Definition}
\newtheorem{remark}[theorem]{Remark}

\numberwithin{equation}{section}
\numberwithin{figure}{section}


\newcommand\Bcal{\mathcal{B}}

\newcommand\Ocal{\mathcal{O}}
\newcommand\Pcal{\mathcal{P}}



\newcommand\Ascr{\mathscr{A}}

\newcommand\Cscr{\mathscr{C}}

\newcommand\Oscr{\mathscr{O}}

\newcommand\Gscr{\mathscr{G}}

\newcommand\Pscr{\mathscr{P}}


\newcommand\C{\mathbb{C}}

\newcommand\CP{\mathbb{CP}}

\newcommand\N{\mathbb{N}}

\newcommand\R{\mathbb{R}}

\renewcommand\S{\mathbb{S}}

\newcommand\Z{\mathbb{Z}}

\renewcommand\c{\mathbb{C}}

\newcommand\cp{\mathbb{CP}}
\renewcommand\d{\mathbb D}

\newcommand\n{\mathbb{N}}
\renewcommand\r{\mathbb{R}}

\newcommand\z{\mathbb{Z}}


\newcommand\igot{\mathfrak{i}}

\renewcommand\igot{\mathfrak{i}}

\newcommand\pgot{\mathfrak{p}}
\newcommand\qgot{\mathfrak{q}}

\newcommand\Agot{\mathfrak{A}}

\newcommand\Mgot{\mathfrak{M}}

%
%

%
%

\renewcommand\imath{\igot}

\newcommand\Psf{\mathsf{P}}

%
%
\newcommand\hra{\hookrightarrow}
\newcommand\lra{\longrightarrow}

\newcommand\longhookrightarrow{\ensuremath{\lhook\joinrel\relbar\joinrel\rightarrow}}

%
%
\newcommand\wt{\widetilde}

\newcommand\di{\partial}
\newcommand\dibar{\overline\partial}

%
%

\renewcommand\span{\mathrm{span}}

\newcommand\Flux{\mathrm{Flux}}



\usepackage{color}

\begin{document}

\fancyhead[LO]{Every meromorphic function is the Gauss map of a conformal minimal surface}
\fancyhead[RE]{A.\ Alarc\'on, F.\ Forstneri\v c, and F.\ J.\ L\'opez}
\fancyhead[RO,LE]{\thepage}

\thispagestyle{empty}

\vspace*{7mm}
\begin{center}
{\bf \LARGE Every meromorphic function is the Gauss map of a conformal minimal surface}
\vspace*{5mm}

{\large\bf A.\ Alarc\'on, F.\ Forstneri\v c, and  F.\ J.\  L\'opez}
\end{center}

\vspace*{7mm}

\begin{quote}
{\small
\noindent {\bf Abstract}\hspace*{0.1cm}
Let $M$ be an open Riemann surface. We prove that every meromorphic function
on $M$ is the complex Gauss map of a conformal minimal immersion $M\to\r^3$ which may furthermore 
be chosen as the real part of a holomorphic null curve $M\to\c^3$. Analogous results are proved 
for conformal minimal immersions $M\to\r^n$ for any $n>3$. We also show that every conformal minimal immersion $M\to\r^n$ is 
isotopic through conformal minimal immersions $M\to\r^n$ to a flat one, and we identify the path connected components of 
the space of all conformal minimal immersions $M\to\R^n$ for any $n\ge 3$.

\vspace*{0.1cm}
\noindent{\bf Keywords}\hspace*{0.1cm} Riemann surface, complex curve, minimal surface, Gauss map.

\vspace*{0.1cm}

\noindent{\bf MSC (2010):}\hspace*{0.1cm} 49Q05, 30F99}

\vspace*{0.1cm}
\noindent{\bf Date: \today}

\end{quote}


\section{Introduction}\label{sec:intro}
Let $M$ be an open Riemann surface. The exterior differential on $M$ splits as the sum $d=\di+\dibar$ 
of the $\C$-linear part $\di$ and the $\C$-antilinear part $\dibar$.
Given a conformal minimal immersion $X=(X_1,\ldots,X_n)\colon M\to\r^n$ 
$(n\ge 3)$, the $1$-form $\di X=(\di X_1,\ldots,\di X_n)$ with values in $\C^n$ is holomorphic, 
it does not vanish anywhere on $M$, and it satisfies the nullity condition 
$\sum_{j=1}^n (\di X_j)^2=0$; we refer to Osserman \cite{Osserman-book} for these classical facts. Therefore, $\di X$ determines the Kodaira type holomorphic map
\begin{equation}\label{eq:GX}
	G_X\colon M\to \CP^{n-1},\quad G_X(p) = [\di X_1(p) \colon \cdots \colon \di X_n(p)]
	\ \ \ (p\in M).
\end{equation}
The map $G_X$ is known as the {\em generalized Gauss map} of $X$ and is of great importance
in the theory of minimal surfaces; see \cite{Osserman-book} and the papers \cite{Fujimoto1983JMSJ,Fujimoto1990JDG,HoffmanOsserman1980MAMS,LopezPerez2003IUMJ,OssermanRu1997JDG,Ros2002DG,Ru1991JDG}, 
among many others. Note that $G_X$ assumes values in the complex hyperquadric
\begin{equation}\label{eq:nullquadric-projected}
     Q_{n-2} = \bigl\{[z_1:\ldots : z_n]\in\CP^{n-1} : z_1^2+ \cdots + z_n^2 = 0\bigr\}. 
\end{equation}     

In this paper we prove the following result. 

%
%
\begin{theorem}\label{th:Gauss-n}
Let $M$ be an open Riemann surface and $n\geq 3$ be an integer.
For every holomorphic map $\Gscr\colon M\to Q_{n-2}\subset \cp^{n-1}$ into the quadric 
\eqref{eq:nullquadric-projected} there exists a conformal minimal immersion $X\colon M\to\r^n$ with
the generalized Gauss map $G_X=\Gscr$ and with vanishing flux. If in addition the map $\Gscr$ is full, 
then $X$ can be chosen to have arbitrary flux and 
to be an embedding if $n\ge 5$ and an immersion with simple double points if $n=4$. 
\end{theorem}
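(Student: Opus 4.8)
The plan is to reduce the problem to a period-vanishing (and then period-control) problem for the holomorphic one-form $\Phi=\di X$, and then to solve that problem by the now-standard techniques from the theory of holomorphic null curves and conformal minimal surfaces, in the spirit of the Oka principle applied to the punctured null quadric. Write $\mathbf{A}_*=\{z\in\c^n:z_1^2+\cdots+z_n^2=0\}\setminus\{0\}$ for the punctured null quadric, with projection $\pi\colon\mathbf{A}_*\to Q_{n-2}$. A conformal minimal immersion $X\colon M\to\r^n$ with $G_X=\Gscr$ is recovered, up to translation, from a nowhere-vanishing holomorphic one-form $\Phi=(\phi_1,\dots,\phi_n)$ on $M$ with $\pi\circ\Phi=\Gscr$ and with the \emph{period-vanishing condition} $\int_\gamma \Re\Phi=0$ for every closed loop $\gamma$ in $M$, by setting $X(p)=\Re\int^p\Phi$; the flux of $X$ is the map $\gamma\mapsto\int_\gamma\Im\Phi$ on $H_1(M;\z)$. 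So Theorem~\ref{th:Gauss-n} amounts to: (i) given $\Gscr$, find \emph{some} nowhere-vanishing holomorphic lift $\Phi_0$ of $\Gscr$ to $\mathbf{A}_*$ (a section of the pulled-back $\C^*$-bundle); (ii) modify $\Phi_0$ within its lifting class so that all real periods vanish (and, when $\Gscr$ is full, so that the imaginary periods realize any prescribed flux homomorphism); (iii) when $\Gscr$ is full and $n\ge 4$, further perturb to achieve the embedding/immersion-with-simple-double-points conclusion by a general-position argument.

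For step (i), the obstruction to lifting $\Gscr\colon M\to Q_{n-2}$ to $\mathbf A_*$ through the principal $\C^*$-bundle $\mathbf A_*\to Q_{n-2}$ lies in $H^1(M;\Ocal^*)$; but $M$ is an open Riemann surface, hence a Stein manifold of dimension one, so every holomorphic line bundle on $M$ is trivial (equivalently $H^2(M;\z)=0$ and Cartan's Theorem~B applies), and the lift $\Phi_0\colon M\to\mathbf A_*$ exists. (If one prefers to stay concrete: a lift of $\Gscr$ to $\mathbf A_*$ is the same as a holomorphic function $f$ with the right divisor, which exists by Weierstrass/Mittag-Leffler on $M$.) Multiplying $\Phi_0$ by a nowhere-vanishing holomorphic function does not change $\Gscr=\pi\circ\Phi_0$, so the search space in step (ii) is $\{h\,\Phi_0:h\in\Ocal^*(M)\}$ together with the possibility of adding to $h$ any holomorphic function whose differential lies in the appropriate sub-bundle; more flexibly, one works in the space of holomorphic maps $M\to\mathbf A_*$ that cover $\Gscr$, which is a \emph{nonempty} affine-type space stable under the deformations we need.

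The heart of the matter is step (ii), the period problem, and this is where I expect the main obstacle to lie. The standard approach: pick a finite homology basis $\delta_1,\dots,\delta_\ell$ of $H_1(M;\z)$ supported in a Runge domain, and consider the period map $\mathcal P(\Phi)=\bigl(\int_{\delta_j}\Phi\bigr)_{j=1}^\ell\in(\C^n)^\ell$. One needs enough ``directions'' of deformation of $\Phi$ inside the fibre over $\Gscr$ to make $\mathcal P$ a submersion onto $\C^\ell$ in the relevant real subspace, so that an implicit-function/fixed-point argument yields a $\Phi$ with $\mathcal P(\Phi)\in(i\r^n)^\ell$ prescribed (vanishing real part for zero flux; arbitrary imaginary part for prescribed flux). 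Here the tangent space to the fibre at $\Phi$ is spanned by $h\,\Phi$ with $h$ ranging over holomorphic functions on $M$ — an infinite-dimensional space — plus, crucially, deformations \emph{along the quadric directions}, i.e.\ sections of the kernel of $d\pi$ only, which is where fullness of $\Gscr$ enters: if $\Gscr$ is full then the image is not contained in any hyperplane, and one can produce, using multiplication by generically chosen meromorphic/holomorphic functions with controlled zeros, period vectors spanning all of $\C^n$ on each $\delta_j$ independently; if $\Gscr$ is not full one only controls the real periods, which still suffices for the zero-flux statement because the constraint ``$\Re$-periods vanish'' is $\ell n$ real equations and the available real-analytic deformation space surjects onto them (one still has enough room since fullness is \emph{not} needed to kill real periods — multiplication by $h\in\Ocal^*(M)$ already moves the period along a complex line, and finitely many such directions span). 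Technically this is carried out by a Mergelyan-type approximation on a suitable exhaustion $M_1\Subset M_2\Subset\cdots$ of $M$ by Runge compact sets, recursively correcting periods at each stage while keeping $\Phi$ nowhere vanishing and covering $\Gscr$; convergence is ensured by taking corrections small, and nowhere-vanishing is preserved because it is an open condition and the target $\mathbf A_*$ is an Oka manifold (so the sprays used for the corrections stay in $\mathbf A_*$).

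Finally, for step (iii), assume $\Gscr$ full and $n\ge4$. Having fixed the flux, one has a conformal minimal immersion $X\colon M\to\r^n$; to upgrade it to an embedding for $n\ge5$ (resp.\ an immersion with only simple, transverse double points for $n=4$) one applies a general-position argument: the space of conformal minimal immersions with $G_X=\Gscr$ and given flux is large enough (again by the period-dominating sprays from step (ii)) that a generic member avoids the ``bad'' set, exactly as in the standard dimension count where $\dim_\r M^{(2)}=4<n$ forces self-intersections to disappear for $n\ge5$ and to be isolated transverse points for $n=4$. One must check that the perturbations realizing general position can be taken to (a) preserve the Gauss map $\Gscr$ and (b) preserve the flux and the period-vanishing; both follow by combining the general-position perturbation with a simultaneous period-correcting perturbation, which is possible because the period-correcting deformations from step (ii) are themselves a submersion, hence survive composition with small generic perturbations. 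I expect the delicate point throughout to be maintaining \emph{three} constraints at once — nowhere vanishing of $\Phi$, the lift condition $\pi\circ\Phi=\Gscr$, and the period condition — under the recursive approximation; the Oka property of $\mathbf A_*$ and the existence of dominating sprays tangent to the fibres of $\pi$ is what makes it go through.
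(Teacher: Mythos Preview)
Your approach is essentially the paper's: lift $\Gscr$ to a holomorphic $f\colon M\to\Agot_*$ (the paper's Lemma~\ref{lem:lifting}), then find a holomorphic multiplier $h\colon M\to\C_*$ so that $hf\theta$ has the prescribed periods via period-dominating sprays and a Runge exhaustion (this is exactly the paper's Theorem~\ref{th:main1}, proved by the method you sketch), then integrate and invoke general position as in \cite{AlarconForstnericLopez2016MZ}. One correction worth making: in the non-full case your reasoning is off---it is not that one ``only controls the real periods,'' and there are no extra ``quadric directions'' beyond scalar multiplication (the fibre of $\pi$ is $\C_*$, so $\ker d\pi$ is already the span of $h\Phi_0$); the point is rather that all periods of $hf\theta$ lie in $\Lambda=\span(f(M))$, the period map surjects onto $\Lambda^\ell$ by the same spray argument applied inside $\Lambda$, and $0\in\Lambda^\ell$, which is how the paper handles it.
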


Recall that a map $\Gscr\colon M\to \cp^{n-1}$ is said to be {\em full} if its image
is not contained in any proper projective subspace. The {\em flux} of a conformal minimal immersion 
$X\colon M\to\R^n$ is the group homomorphism $\Flux_X\colon H_1(M;\z)\to\r^n$ given by
\begin{equation}\label{eq:flux}
     \Flux_X(\gamma)=\int_\gamma \Im(\di X)=-\imath\int_\gamma \di X\quad 
     \text{for every closed curve }\gamma\subset M.
\end{equation}
Here, $\imath=\sqrt{-1}$ and $\Re$, $\Im$ denote the real and the imaginary part, respectively.

We shall write $\C^n_*= \c^n\setminus\{0\}$ and $\C_*=\C\setminus\{0\}$.
Denote by $\pi\colon \c^n_* \to\cp^{n-1}$ the canonical projection
$\pi(z_1,\ldots,z_n)=[z_1:\cdots:z_n]$. Then, $Q_{n-2}=\pi(\Agot_*)$ where 
\begin{equation}\label{eq:nullquadric}
	\Agot=\big\{ (z_1,\ldots, z_n)\in \c^n : \sum_{j=1}^n z_j^2=0\big\}
\end{equation}
is the {\em null quadric} and $\Agot_*=\Agot\setminus\{0\}$. Fixing a nowhere vanishing holomorphic $1$-form
$\theta$ on $M$ (such exists by the Oka-Grauert principle), the holomorphic map $\di X/\theta\colon M\to \C^n$ 
assumes values in $\Agot_*$ and we have that $G_X= \pi\circ (\di X/\theta)$.

Since an open Riemann surface $M$ is homotopy equivalent to a wedge of circles,
every holomorphic map $\Gscr \colon M\to \cp^{n-1}$ lifts to a holomorphic map $f=(f_1,\ldots,f_n) \colon M\to \c^n_*$ 
such that $\Gscr = \pi\circ f= [f_1\colon\cdots\colon f_n]$
(see Lemma \ref{lem:lifting}). 
If $\Gscr(M)\subset Q_{n-2}$ then $f(M)\subset \Agot_*$.
Clearly, $\Gscr$ is full if and only if $\span(f(M))=\C^n$. 
The main idea behind the proof of Theorem \ref{th:Gauss-n}
is to find a nowhere vanishing holomorphic function $h\colon M\to \C_*$ such that
the $1$-form $\Phi=hf\theta$ with values in $\C^n$ integrates to a conformal minimal immersion $X\colon M\to\R^n$ 
with $\di X=\Phi$; hence $G_X=\Gscr$. This is the case if and only if the real periods of $\Phi$ vanish:
\begin{equation}\label{eq:period00}
	\int_\gamma \Re(\Phi)=0\quad \text{for all closed curves $\gamma$ in $M$}.
\end{equation} 
If this holds then, fixing a base point $p_0\in M$, $X$ is obtained by the formula 
\begin{equation}\label{eq:X}
   	X(p)=2\int_{p_0}^p \Re(\Phi)\quad \text{for all } p\in M.
\end{equation}
If $M$ is simply connected then \eqref{eq:period00} is satisfied for any holomorphic 
function $h$ on $M$, and hence in this case the first part of Theorem \ref{th:Gauss-n} is obvious.
However, if $M$ is topologically nontrivial, then the task becomes a fairly involved one; a suitable  
{\em multiplier} $h$ is provided by Theorem \ref{th:main1} which is the main technical result of the paper. 

In the case $n=3$, the quadric $Q_1\subset \CP^2$ \eqref{eq:nullquadric-projected} is the image of a quadratically
embedded Riemann sphere $\CP^1\hra \CP^2$, and the {\em complex Gauss map} of a conformal minimal
immersion $X=(X_1,X_2,X_3)\colon M\to\R^3$ is defined to be the holomorphic map 
\begin{equation}\label{eq:C-Gauss}
	g_X = \frac{\di X_3}{\di X_1-\imath \, \di X_2}  =  \frac{\di X_2-\imath\, \di X_1}{\imath\, \di X_3} : M \lra \CP^1.
\end{equation}
The function $g_X$ equals the stereographic projection of the real Gauss map
$N=(N_1,N_2,N_3) \colon M\to \S^2 \subset \r^3$ to the Riemann sphere $\CP^1$; explicitly,
\[
	g_X = \frac{N_1+\imath N_2}{1-N_3} : M \lra \C\cup\{\infty\}=\CP^1.
\]
We can recover the differential $\di X=(\di X_1,\di X_2,\di X_3)$ from the pair $(g_X,\phi_3)$
with $\phi_3=\di X_3$ by the classical Weierstrass formula
\begin{equation}\label{eq:EWR}
	\di X= \Phi=(\phi_1,\phi_2,\phi_3) =
	\left( \frac{1}{2} \left(\frac{1}{g_X}-g_X\right),  \frac{\imath}{2} \left(\frac{1}{g_X}+g_X \right),1\right) \phi_3.
\end{equation}
(See \cite[Lemma 8.1, p.\ 63]{Osserman-book}.) 
Conversely, given a pair $(g,\phi_3)$ consisting of a holomorphic map $g\colon M\to \CP^1$
and a meromorphic $1$-form $\phi_3$ on $M$, the meromorphic $1$-form 
$\Phi=(\phi_1,\phi_2,\phi_3)$ defined by \eqref{eq:EWR} satisfies $\sum_{j=1}^3 \phi_j^2=0$;
it is the differential $\di X$ of a conformal minimal immersion \eqref{eq:X} 
if and only if it is holomorphic, nowhere vanishing, and its real periods vanish.
Hence, Theorem \ref{th:Gauss-n} has the following immediate corollary.

\begin{corollary}\label{cor:Gauss-3}
Let $M$ be an open Riemann surface. Every holomorphic map $g\colon M\to\CP^1$ is the complex
Gauss map \eqref{eq:C-Gauss} of a conformal minimal immersion $X\colon M\to \R^3$ with vanishing flux.
If $g$ is nonconstant, then we can find $X$ with arbitrary given flux. 
\end{corollary}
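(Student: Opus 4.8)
The plan is to deduce Corollary \ref{cor:Gauss-3} directly from Theorem \ref{th:Gauss-n} in the case $n=3$, via the identification of $\CP^1$ with the conic $Q_1\subset\CP^2$ from \eqref{eq:nullquadric-projected}. Concretely, I would introduce the holomorphic embedding
\[
	\iota\colon\CP^1\hra\CP^2,\qquad \iota([u:v])=[v^2-u^2:\imath(v^2+u^2):2uv],
\]
that is, $\iota(g)=[1-g^2:\imath(1+g^2):2g]$ in the affine chart $g=u/v$. One checks that $(v^2-u^2)^2+(\imath(v^2+u^2))^2+(2uv)^2\equiv0$ and that the three quadratic forms have no common zero, so $\iota$ is a well-defined morphism with image contained in $Q_1$; being an injective rational parametrization of the smooth conic $Q_1$, it is a biholomorphism $\CP^1\to Q_1$. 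Comparing with the Weierstrass formula \eqref{eq:EWR}, one sees that $\iota(g)$ is exactly the projectivized Weierstrass vector, so that for every conformal minimal immersion $X\colon M\to\R^3$ the generalized and the complex Gauss maps are related by
\[
	G_X=\iota\circ g_X,
\]
equivalently $g_X=\iota^{-1}\circ G_X$, with $\iota^{-1}$ given by the two local formulas on the right-hand side of \eqref{eq:C-Gauss}.

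Now let $g\colon M\to\CP^1$ be holomorphic and set $\Gscr=\iota\circ g\colon M\to Q_1\subset\CP^2$. By Theorem \ref{th:Gauss-n} with $n=3$ there is a conformal minimal immersion $X\colon M\to\R^3$ with $G_X=\Gscr$ and with vanishing flux; then $g_X=\iota^{-1}\circ G_X=\iota^{-1}\circ\iota\circ g=g$, which proves the first assertion. If moreover $g$ is nonconstant, then $g(M)$ is infinite (it contains a nonempty open subset of $\CP^1$ by the open mapping theorem), hence so is $\Gscr(M)=\iota(g(M))\subset Q_1$. Since by B\'ezout the conic $Q_1$ meets every projective line of $\CP^2$ in at most two points, $\Gscr(M)$ is contained in no line, and a fortiori in no point, so $\Gscr$ is full. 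The second part of Theorem \ref{th:Gauss-n} then provides $X$ with $G_X=\Gscr$ and arbitrary prescribed flux, and again $g_X=g$. (The embedding and double-point conclusions of that theorem are vacuous for $n=3$.)

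The argument is purely formal once Theorem \ref{th:Gauss-n} is in hand, so I do not expect a genuine obstacle here: all the analytic content, in particular the construction of the period-killing multiplier $h$ of Theorem \ref{th:main1}, is absorbed into that theorem. The only points meriting a line of verification are the algebraic identity $G_X=\iota\circ g_X$, which is nothing but a restatement of the classical Weierstrass representation \eqref{eq:EWR}, and the fullness of $\iota\circ g$ for nonconstant $g$, which reduces to the elementary fact that a conic in $\CP^2$ meets a line in at most two points.
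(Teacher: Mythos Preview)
Your proposal is correct and follows exactly the route the paper takes: the paper derives the corollary immediately from Theorem \ref{th:Gauss-n} via the identification $\CP^1\cong Q_1\subset\CP^2$ given by the Weierstrass formula \eqref{eq:EWR}, and records separately that a nonconstant map $g\colon M\to\CP^1\cong Q_1$ is full in $\CP^2$. Your write-up simply makes the embedding $\iota$ and the fullness argument (via B\'ezout) explicit, which the paper leaves implicit.
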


Note that a nonconstant map $g \colon M\to \CP^1 \cong Q_1\subset \CP^2$ is full as a map into $\CP^2$.

The complex Gauss map of a minimal surface in $\r^3$ provides crucial information about its geometry. Several important
 properties of the surface depend only on its Gauss map, in particular, the Gauss curvature and the Jacobi operator 
(see e.g.\ \cite{MeeksPerez2004SDG,MeeksPerez2012AMS,Osserman1980DG,Osserman-book}). 
Thus, Corollary \ref{cor:Gauss-3} 
has applications to the theory of {\em stable minimal surfaces}. 
Recall that an immersed open minimal surface $S\subset\r^3$ is {\em stable} if any relatively compact smoothly bounded 
domain $D\subset S$ has the minimal area (in the induced metric) among all small variations of $\overline D$ which keep 
the boundary $bD$ fixed; equivalently, if the index of any such $D$ is zero. Let $X\colon M\to\r^3$ be a conformal 
minimal immersion and let $g_X$ denote its complex Gauss map \eqref{eq:C-Gauss}. It is classical 
(see Barbosa and do Carmo \cite[Theorem 1.2]{BarbosaDoCarmo1976AJM}) that the minimal surface $X(M)$ is stable
if the spherical image $g_X(M)\subset \CP^1$ of $X(M)$ has area less than $2\pi$. 
This holds for example if $g_X(M)$ lies in the unit disk $\d\subset\c$. 
Recall that every open Riemann surface $M$ carries a holomorphic function $h$ with no critical points 
(see Gunning and Narasimhan  \cite{GunningNarasimhan1967MA}). For any null vector $\nu \in \Agot^2_*$, the map 
$M\ni x\mapsto \Re(h(x)\nu) \in \r^3$ is a flat conformal minimal immersion with constant Gauss map, 
hence stable. In view of Barbosa and do Carmo \cite{BarbosaDoCarmo1976AJM}, 
Corollary \ref{cor:Gauss-3} gives the following more general result in this direction.

%
%
\begin{corollary}\label{co:stable}
If $M$ is an open Riemann surface and $g\colon M\to\cp^1$ is a holomorphic map whose image $g(M)$ has area less than $2\pi$, 
then there is a stable conformal minimal immersion $M\to\r^3$ with the complex Gauss map $g$.
\end{corollary}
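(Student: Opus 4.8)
The plan is to apply Corollary \ref{cor:Gauss-3} to the given map $g$, producing a conformal minimal immersion $X\colon M\to\r^3$ whose complex Gauss map $g_X$ equals $g$, and then invoke the Barbosa--do Carmo stability criterion. Concretely, the key observation is that the complex Gauss map $g_X$ is the stereographic projection of the real Gauss map $N\colon M\to\s^2$, and stereographic projection is conformal; hence the spherical area of $N(M)\subset\s^2$ equals the area of $g_X(M)\subset\cp^1$ with respect to the round (Fubini--Study) metric on $\cp^1$ that corresponds to $\s^2$ under stereographic projection. Since we are assuming the area of $g(M)$ is less than $2\pi$, and since Corollary \ref{cor:Gauss-3} gives us $X$ with $g_X=g$, the spherical image of $X(M)$ has area less than $2\pi$, and stability follows at once from \cite[Theorem 1.2]{BarbosaDoCarmo1976AJM}.

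First I would address a small bookkeeping point: Corollary \ref{cor:Gauss-3} distinguishes the constant and nonconstant cases, but this causes no difficulty here. If $g$ is nonconstant, apply Corollary \ref{cor:Gauss-3} directly (say with vanishing flux, though any flux works). If $g$ is constant, then its image is a single point, trivially of area less than $2\pi$, and Corollary \ref{cor:Gauss-3} still yields a conformal minimal immersion $X$ with $g_X=g$; such an $X$ is flat with constant Gauss map, hence automatically stable (indeed this is the flat example recalled in the paragraph preceding the statement). So in either case we obtain the desired $X$.

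The only genuinely substantive step — and the one I expect to be the main (though still minor) obstacle — is to pin down precisely which metric is meant by ``area'' on $\cp^1$ in the statement, and to confirm that, under this normalization, the threshold $2\pi$ transported through stereographic projection matches the $2\pi$ in Barbosa--do Carmo's theorem (which is phrased for the spherical Gauss image in $\s^2$, a sphere of area $4\pi$). Here one uses that the stereographic projection $N_1+\imath N_2\mapsto (N_1+\imath N_2)/(1-N_3)$ sends the round metric on $\s^2$ to the standard round metric on $\cp^1\cong\s^2$ of total area $4\pi$, and is an isometry up to this identification; thus ``area less than $2\pi$'' for $g(M)$ is exactly the hypothesis of \cite[Theorem 1.2]{BarbosaDoCarmo1976AJM} applied to $N(M)=g^{-1}$-transported image. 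Once this normalization is fixed, no further estimate is required: the conclusion is immediate from the cited stability criterion, so the corollary follows.
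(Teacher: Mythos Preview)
Your proposal is correct and follows exactly the approach indicated in the paper: the corollary is stated there as an immediate consequence of Corollary~\ref{cor:Gauss-3} combined with the Barbosa--do Carmo stability criterion \cite[Theorem 1.2]{BarbosaDoCarmo1976AJM}, with no further argument given. Your additional remarks on the constant case and on the metric normalization are sound elaborations, but the paper does not spell these out.
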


We also prove the following result concerning isotopies (i.e., smooth $1$-parameter families)
of conformal minimal immersions into $\R^3$.

%
%

\begin{theorem}\label{th:avoiding}
Given an open Riemann surface $M$ and  a conformal minimal immersion $X\colon M\to\r^3$,
there exists an isotopy $X_t\colon M\to \R^3$ $(t\in [0,1])$ of conformal minimal immersions such that
$X_0=X$ and the complex Gauss map $g$ of $X_1$ \eqref{eq:C-Gauss} is nonconstant and avoids any 
two given points of the Riemann sphere. There also exists an isotopy $X_t$ as above such that $X_0=X$ and $X_1$ is flat.
\end{theorem}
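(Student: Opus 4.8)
The plan is to work throughout with the Weierstrass representation \eqref{eq:EWR}. A conformal minimal immersion $Y\colon M\to\r^3$ is encoded by the pair $(g,\phi_3)=(g_Y,\di Y_3)$, and conversely a pair consisting of a holomorphic map $g\colon M\to\CP^1$ and a meromorphic $1$-form $\phi_3$ on $M$ yields such an immersion precisely when the $1$-form $\Phi$ of \eqref{eq:EWR} is holomorphic, nowhere vanishing, and has vanishing real periods \eqref{eq:period00}. Both assertions will be proved by building a smooth path $t\mapsto(g_t,\phi_{3,t})$, $t\in[0,1]$, of such \emph{admissible} Weierstrass data with $(g_0,\phi_{3,0})$ the data of the given $X$, such that $g_1$ (respectively $X_1$) has the required property. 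The engine is a parametric version of the period-killing technique behind Theorem \ref{th:main1}: having fixed a nowhere-vanishing holomorphic $1$-form $\theta$ on $M$ and a ``rough'' smooth family of candidate $1$-forms, one multiplies by a smooth family of nowhere-vanishing holomorphic functions $h_t\colon M\to\C_*$ with $h_0\equiv1$, chosen by a parametric period-domination argument so that all real periods vanish identically in $t$; note that multiplying $\Phi$ by such an $h$ alters neither the complex Gauss map nor the zero divisor of the third component, so admissibility is preserved.

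\emph{Isotopy to a flat immersion.} If $g_X$ is constant then $X$ is already flat; otherwise, first deform $X$ to an immersion $X'$ with Gauss map $g'=\E^{f}$ for a fixed nonconstant holomorphic function $f$ on $M$ (a nonconstant map $M\to\C_*$ of winding number zero). For this, choose a smooth path $g_t$ of holomorphic maps $M\to\CP^1$ from $g_X$ to $g'$, which may be taken nonconstant throughout (the space of holomorphic maps $M\to\CP^1$ is path connected); let $\phi_{3,t}$ be a holomorphic $1$-form whose zero divisor is exactly $g_t^{-1}(\{0,\infty\})$ counted with multiplicities (such a $1$-form exists by the Weierstrass theorem on open Riemann surfaces and may be chosen smooth in $t$ with $\phi_{3,0}=\di X_3$); and correct the real periods with multipliers $h_t$ as above, which is possible because each $g_t$, being nonconstant, is full in $\CP^2$. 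This gives the isotopy $X\rightsquigarrow X'$. Next run the path $g_t=\E^{(1-t)f}$, $t\in[0,1]$, which stays in $\C_*$ so that $\phi_{3,t}$ may be kept nowhere vanishing, and again correct the periods with multipliers $h_t$, $h_0\equiv1$. At $t=1$ we have $g_1\equiv1$, and choosing the endpoint multiplier so that $h_1\theta=dF$ is exact --- with $F$ a holomorphic function on $M$ having $dF$ nowhere zero, which exists by Gunning--Narasimhan \cite{GunningNarasimhan1967MA} --- yields $\Phi_1=(0,\imath,1)\,dF$, which is exact, hence period-free, and defines the flat conformal minimal immersion $X_1=2\Re((0,\imath,1)F)$. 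Concatenating proves the second assertion.

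\emph{Isotopy to a nonconstant Gauss map avoiding two points.} Let $a,b\in\CP^1$ be the given points. By the previous construction (after, if needed, a trivial sub-isotopy through flat immersions that moves the constant Gauss map, which entails no periods to correct) we may first deform $X$ to a flat immersion whose constant Gauss map $c$ lies in $\CP^1\setminus\{a,b\}$. Since $\CP^1\setminus\{a,b\}$ is biholomorphic to $\C_*$ and the path component of the constant map $c$ in $\Ocal(M,\C_*)$ contains nonconstant maps (for instance $\E^f$ with $f$ a nonconstant holomorphic function on $M$), there is a smooth path $g_t$ of holomorphic maps $M\to\CP^1\setminus\{a,b\}$ from $c$ to a nonconstant map $g_1$, nonconstant for $t>0$. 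Running the same procedure along this path --- compatible $\phi_{3,t}$ vanishing on $g_t^{-1}(\{0,\infty\})$, period correction by multipliers $h_t$ with $h_0\equiv1$, and initial data matching the flat immersion --- produces an isotopy ending at a conformal minimal immersion $X_1$ with $g_{X_1}=g_1$ nonconstant and omitting $a$ and $b$. Concatenation with the isotopy from $X$ to that flat immersion completes the first assertion.

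\emph{Main obstacle.} The substantive point is to carry out the period-killing of Theorem \ref{th:main1} in the parameter $t$: the multipliers $h_t$ must vary smoothly with $h_0\equiv1$ while the divisor $g_t^{-1}(\{0,\infty\})$ on which $\phi_{3,t}$ is forced to vanish moves with $t$, and --- crucially, in the flat case --- while the fullness of $g_t$, on which period domination relies, degenerates as $g_t$ approaches a constant. This calls for a careful patching of local period-dominating sprays over a Runge exhaustion of $M$, with all choices smooth in $t$, together with a limiting argument near the parameter values at which the Gauss map becomes constant; it is an elaboration of the non-parametric arguments already required for Theorems \ref{th:main1} and \ref{th:Gauss-n}.
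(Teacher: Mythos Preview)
Your strategy differs from the paper's in order and in substance, and it runs into a real gap that the paper's route avoids.

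You attempt the flat assertion first and derive the avoiding-two-points assertion from it; the paper does the reverse. More importantly, both legs of your argument require applying Theorem~\ref{th:main1} to a family $\Phi_t$ that fails to be full at a parameter value where $g_t$ is constant---at $t=1$ in your path $g_t=\E^{(1-t)f}$, and again at $t=0$ when you restart from the flat immersion. Theorem~\ref{th:main1} demands fullness for \emph{every} $t$, so it does not apply as stated. You correctly flag this as the ``main obstacle'' but offer only the sketch of a limiting argument; the period-domination of Lemma~\ref{lem:existence-sprays} genuinely degenerates when $\dim\span f_t(M)$ drops, and making the multipliers $h_t$ vary continuously across such a jump is not a routine elaboration of the existing machinery.

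The paper sidesteps this entirely. For the first assertion, Lemma~\ref{lem:2points} produces a homotopy $g_t$ with $g_t$ nonconstant for every $t\in(0,1]$ and $g_1(M)\subset\CP^1\setminus\{a,b\}$; Lemma~\ref{lem:lifting} lifts it to $f_t\colon M\to\Agot_*$ with $f_0=\di X/\theta$ (this Oka-theoretic lifting also replaces your construction of $\phi_{3,t}$ with prescribed moving zero divisor, whose continuity in $t$ you assert without proof); and Theorem~\ref{th:main1} then supplies the multipliers. For the second assertion, after arranging $g(M)\subset\C_*$ and vanishing flux via the first part, the paper writes down the explicit family
\[
\Phi_\lambda=\left(\tfrac12\Bigl(\tfrac1{g}-\lambda^2 g\Bigr),\ \tfrac{\imath}{2}\Bigl(\tfrac1{g}+\lambda^2 g\Bigr),\ \lambda\right)\phi_3,\qquad \lambda\in[0,1],
\]
which is exact for every $\lambda$ because $\phi_3$, $g\phi_3$ and $g^{-1}\phi_3$ are $\C$-linear combinations of the components of the exact form $\di X$. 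Hence no period correction whatsoever is needed on this leg, and the degeneration to a flat immersion at $\lambda=0$ poses no difficulty. This explicit one-parameter deformation is the device your argument is missing.
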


Theorem \ref{th:avoiding} shows in particular that every conformal minimal immersion $M\to\r^3$ 
of an open Riemann surface can be deformed to a stable one.

In Section \ref{sec:path} we prove a more precise result to the effect that for any $n\ge 3$
the path connected components of the space of all conformal minimal immersions $M\to\R^n$ are in bijective
correspondence with the path connected components of the space of all {\em nonflat} conformal minimal
immersions $M\to\R^n$; see Theorem \ref{th:pathconnected}. There is only one connected 
component when $n>3$, but the situation is more complicated in dimension $n=3$.

The results presented above are proved in Section \ref{sec:finalsec} by using complex analytic methods. 
We now explain the main underlying technical result.

Let $M$ be an open Riemann surface and let $n\in\n$. A holomorphic map 
$f\colon M\to\c^n$ is said to be {\em full} if the image $f(M)$ does not lie in any affine hyperplane of $\c^n$. 
A holomorphic $1$-form $\Phi=(\phi_1,\ldots,\phi_n)$ on $M$ with values in $\C^n$ is said to be {\em full} 
if the map $\Phi/\theta\colon M\to\c^n$ is full, where $\theta$ is a holomorphic $1$-form vanishing nowhere on $M$; 
clearly the definition is independent of the choice of $\theta$. 

The following is the main technical result of this paper; it is proved in Section \ref{sec:proof}.

%
%
%
%
\begin{theorem}\label{th:main1}
Let $M$ be an open Riemann surface and let $n\in\n$ be an integer. Let $\Phi_t=(\phi_{t,1},\ldots,\phi_{t,n})$, $t\in [0,1]$, 
be a continuous family of full holomorphic $1$-forms on $M$ with values in $\C^n$, 
and let $\qgot_t\colon H_1(M;\z)\to\c^n$, $t\in [0,1]$, be a continuous family of group homomorphisms. 
Then there exists a continuous family of holomorphic functions $h_t\colon M\to\c_*$, $t\in [0,1]$, such that
\begin{equation}\label{eq:periods}
	\int_\gamma h_t\, \Phi_t=\qgot_t(\gamma)\quad \text{for every closed curve $\gamma\subset M$ and $t\in [0,1]$}.
\end{equation}
Furthermore, if the condition \eqref{eq:periods} holds at $t=0$ with the constant function $h_0=1$, then the homotopy
$h_t\colon M\to\c_*$ can be chosen with $h_0=1$.
\end{theorem}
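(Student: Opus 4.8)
The plan is to reduce the statement to a period‑matching problem for a finite set of generating cycles and then solve it with a combination of the Gunning–Narasimhan‑type flexibility of holomorphic functions on open Riemann surfaces together with a controlled perturbation argument, all carried out continuously in the parameter $t$. Fix a nowhere vanishing holomorphic $1$‑form $\theta$ on $M$ and write $\Phi_t = f_t\,\theta$ with $f_t=(f_{t,1},\dots,f_{t,n})\colon M\to\C^n$ a continuous family of \emph{full} holomorphic maps. Pick a finite homology basis $\gamma_1,\dots,\gamma_\ell$ for $H_1(M;\Z)$ represented by smooth loops whose union $C$ is a deformation retract of a Runge domain $\Omega\Subset M$ (a wedge of circles sitting inside a compact bordered surface). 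The goal \eqref{eq:periods} is then the finite system $\int_{\gamma_j} h_t\,f_t\,\theta = \qgot_t(\gamma_j)$, $j=1,\dots,\ell$, for the unknown nowhere vanishing $h_t$. Writing $h_t=e^{g_t}$, this becomes a nonlinear system in the holomorphic function $g_t$, and the key point is that the \emph{period map} $g\mapsto\bigl(\int_{\gamma_j} e^{g} f_t\theta\bigr)_{j}$ is, at a suitable base point, a submersion onto $\C^\ell$ — this is where fullness of $f_t$ is used, exactly as in the period‑dominating arguments of Alarcón–Forstnerič–López for null curves.

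First I would establish the \emph{period domination lemma}: for a fixed full $f\colon M\to\C^n$ and a fixed $g_0$, one can find finitely many holomorphic functions $g_1,\dots,g_N$ on (a neighborhood of) $\overline\Omega$ such that the map
\begin{equation*}
	\C^N\ni\zeta\longmapsto \Bigl(\int_{\gamma_j} e^{g_0+\sum_k \zeta_k g_k}\, f\,\theta\Bigr)_{j=1}^{\ell}\ \in\ \C^\ell
\end{equation*}
has surjective differential at $\zeta=0$. Since $f$ is full, for each $j$ one can choose a point $p_{j}$ on $\gamma_j$ and a local holomorphic function that, after multiplication by a component $f_{\cdot,i}$ that is nonzero at $p_j$, is concentrated near $p_j$; by Runge approximation (the Mergelyan/Runge theorem for open Riemann surfaces, since $\Omega$ is Runge in $M$) these local data are approximated by global holomorphic functions on $M$, yielding the required $g_k$. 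The spanning/fullness condition $\span f(M)=\C^n$ guarantees that the resulting period variations can be made linearly independent over the $\ell$ cycles. This is the technical heart, and the main obstacle: one must arrange the domination \emph{simultaneously and continuously in $t\in[0,1]$}, which I would handle by a compactness argument on $[0,1]$ (cover by finitely many subintervals on which a single choice of auxiliary functions $g_k$ works) followed by a partition‑of‑unity patching in the parameter, or more cleanly by applying the parametric implicit function theorem directly to the map $(t,\zeta)\mapsto(\text{periods})$ once a $t$‑continuous choice of $g_k$'s with pointwise‑surjective differential is in hand.

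With domination available, the existence of $h_t$ follows from a standard implicit‑function‑theorem argument in the Banach space of holomorphic functions on $\overline\Omega$ (say $\mathcal A(\overline\Omega)$): starting from $h_t\equiv 1$, the periods of $f_t\theta$ form a continuous family in $\C^\ell$, the target values $\qgot_t(\gamma_j)$ form another continuous family, and the surjectivity of the differential lets one solve for $g_t=\log h_t$ depending continuously on $t$ — first on $\overline\Omega$, then extended to all of $M$ by Runge approximation (which changes the periods only slightly and can be corrected by one more application of domination). Finally, for the addendum: if \eqref{eq:periods} already holds at $t=0$ with $h_0=1$, then at $t=0$ the needed correction $g_0$ is $0$, so the continuous solution family produced by the implicit function theorem can be normalized to start at $g_0=0$, i.e.\ $h_0=1$; concretely, replace the family $h_t$ by $h_t\cdot(h_0)^{-1}$‑type corrections are unnecessary because the construction is already anchored at the trivial solution when $t=0$. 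The only care needed is that all approximations and corrections are performed with error going to $0$ as $t\to 0$, which is automatic from the continuity of the data and the openness of the submersion condition.
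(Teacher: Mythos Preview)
There are two genuine gaps. First, you fix a \emph{finite} homology basis $\gamma_1,\dots,\gamma_\ell$ of $H_1(M;\Z)$, but an open Riemann surface can have infinitely generated first homology, so this reduction is not available in general. The paper handles this by exhausting $M$ by sublevel sets $M_j$ of a Morse strongly subharmonic exhaustion function and solving the period problem inductively on each $M_j$, dealing with one new cycle at a time as the topology increases; this inductive exhaustion is essential and is missing from your sketch.

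Second, and more seriously, the implicit function theorem does not do what you need. Period domination gives you that the period map $\zeta\mapsto\bigl(\int_{\gamma_j} e^{g_0+\sum_k\zeta_k g_k} f_t\theta\bigr)_j$ is a submersion at $\zeta=0$, hence \emph{locally} surjective near the base value. But the target periods $\qgot_t(\gamma_j)$ can be arbitrarily far from the initial periods $\int_{\gamma_j} f_t\theta$, and a submersion from a Banach space to $\C^{n\ell}$ need not be globally surjective (think of $z\mapsto e^z$). You never explain how to bridge this large gap; ``surjectivity of the differential lets one solve'' is precisely the step that fails. The paper supplies the missing ingredient in its Lemma~\ref{lem:periods}: on each arc it constructs, by hand, a continuous multiplier $h\colon I\to\C_*$ achieving an \emph{arbitrary} prescribed period. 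The construction is of convex-integration type --- one first chooses a step function hitting the target (using that the $f_t$-values span $\C^n$), then smooths it into a continuous $\C_*$-valued function by connecting the steps through paths in $\C_*$ --- and this produces multipliers that are genuinely far from the constant $1$. Only after this large correction is made does the paper use period domination and the implicit function theorem to fix the small residual errors introduced by Mergelyan approximation. Your proposal has the second, small-correction step but not the first, large-correction step, and without it the argument does not close.
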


Flux-vanishing conformal minimal surfaces in $\r^3$ admit an elementary deformation through the family of associated 
surfaces which share the same complex Gauss map. On the other hand, conformal minimal surfaces with vertical flux admit 
the L\'opez-Ros deformation (see \cite{LopezRos1991JDG}) which homothetically deforms the complex Gauss map while
preserving the third component. Recently, the first two named authors
proved that every conformal minimal immersion $M\to\r^3$ is 
isotopic to the real part of a holomorphic null curve $M\to\c^3$ (see \cite[Theorem 1.1]{AlarconForstneric2016CR}). 
Theorem \ref{th:main1} allows one to lift isotopies of full holomorphic maps $\Gscr_t\colon M\to Q_{n-2}\subset\cp^{n-1}$ $(t\in[0,1])$ 
to isotopies of conformal minimal immersions $X_t\colon M\to\r^n$ with the generalized Gauss maps $\Gscr_t$ and prescribed flux maps 
$\pgot_t\colon H_1(M;\z)\to\r^n$. In particular, we obtain the following stronger form of 
\cite[Theorem 1.1]{AlarconForstneric2016CR} in which the generalized Gauss map is preserved.

\begin{corollary}\label{cor:Crelle-sameGaussmap}
Let $M$ be an open Riemann surface and let $n\ge 3$ be an integer. Every conformal minimal immersion $X\colon M\to\R^n$
is isotopic through conformal minimal immersions $X_t\colon M\to\R^n$ $(t\in [0,1])$ 
to the real part $X_1=\Re Z$ of a holomorphic null curve $Z\colon M\to\C^n$ such that
all maps $X_t$ in the family have the same generalized Gauss map $G_X\colon M\to\CP^{n-1}$. 
Furthermore, if the generalized Gauss map $G_X$ of $X$ is full, then there is an isotopy $X_t$ as above 
such that $X_0=X$ and $X_1$ has any given flux.
\end{corollary}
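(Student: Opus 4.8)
The plan is to realize the isotopy in the form $\di X_t = h_t\,\di X$ for a continuous family of zero\nobreakdash-free holomorphic functions $h_t\colon M\to\C_*$ with $h_0\equiv 1$, furnished by Theorem~\ref{th:main1}. Writing $\di X=(\phi_1,\dots,\phi_n)$, the $1$-form $h_t\,\di X$ is holomorphic, nowhere vanishing, and satisfies $\sum_j(h_t\phi_j)^2=h_t^2\sum_j\phi_j^2=0$; hence if its real periods vanish it is the differential of a conformal minimal immersion $X_t$ via \eqref{eq:X}, and, $h_t$ being a zero-free scalar, $G_{X_t}=[\,h_t\phi_1:\cdots:h_t\phi_n\,]=G_X$ for every $t$. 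Since $\Flux_{X_t}(\gamma)=\Im\int_\gamma h_t\,\di X$ and, the real periods of $\di X$ being zero, $\int_\gamma\di X=i\,\Flux_X(\gamma)$, it suffices to achieve $\int_\gamma h_t\,\di X=\qgot_t(\gamma)$ with $\qgot_t:=(1-t)\,i\,\Flux_X+t\,i\,\pgot$, where $\pgot\colon H_1(M;\Z)\to\R^n$ is the desired flux of $X_1$. As $\Re\,\qgot_t\equiv 0$, every $X_t$ is well defined; since $\qgot_0(\gamma)=\int_\gamma\di X$ is solved by the constant $h_0\equiv1$, Theorem~\ref{th:main1} yields such $h_t$ with $h_0\equiv1$, whence $X_0=X$; and $\Flux_{X_1}=\pgot$. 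Taking $\pgot=0$ gives $\int_\gamma\di X_1=0$, so $\di X_1=\frac12 dZ$ for a holomorphic $Z\colon M\to\C^n$, which is a null curve because $\sum_j(dZ_j)^2=4\sum_j(\di X_{1,j})^2=0$, and $X_1=\Re Z$ up to an additive constant; allowing $\pgot$ arbitrary gives the ``furthermore''.

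To apply Theorem~\ref{th:main1} to the constant family $\Phi_t\equiv\di X$ one needs $\di X$ to be a \emph{full} $1$-form, i.e.\ $\di X/\theta$ to avoid every affine hyperplane of $\C^n$. If $G_X$ is not full, then $\di X/\theta$ takes values in a proper linear subspace $V\subsetneq\C^n$, and $i\,\Flux_X(\gamma)=\int_\gamma\di X\in V$ forces $\Flux_X(\gamma)\in V\cap\R^n$ for all $\gamma$; choosing linear coordinates $V\cong\C^m$ one runs the argument with $n$ replaced by $m$ and target flux $0\in V\cap\R^n$ — which is exactly what the first assertion requires — and by minimality of $V$ the projective class $G_X\colon M\to\CP^{m-1}$ is now full. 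Thus one is reduced to the case of a full generalized Gauss map.

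The point I expect to be the main obstacle is that a full generalized Gauss map does \emph{not} by itself make $\di X$ a full $1$-form: $\di X/\theta$ spans $\C^n$ but may still lie in an affine hyperplane missing the origin. One therefore first deforms $X$, through conformal minimal immersions with unchanged Gauss map $G_X$, to an immersion $X'$ whose differential $\di X'$ is a full $1$-form, and then concatenates with the isotopy of the first paragraph applied to $X'$ (family $\Phi_t\equiv\di X'$). Such a preliminary deformation exists because the non-full $1$-forms in the projective class $G_X$ with vanishing real periods are confined to a finite-dimensional family of affine subspaces, so a generic zero-free multiplier $g$ near $1$ with $\Re\int_\gamma g\,\di X=0$ for all $\gamma$ makes $g\,\di X$ full, and such $g$ can be connected to $1$ inside the real-analytic set of zero-free multipliers satisfying this period constraint; this uses only the period-prescription mechanism underlying Theorem~\ref{th:main1}, not its full strength (and avoids circularity, since Theorem~\ref{th:main1} itself is not available for the non-full form $\di X$). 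Continuity in $t$, the nowhere-vanishing and nullity conditions, and $X_0=X$ are then immediate from the construction.
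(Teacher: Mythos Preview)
Your first two paragraphs are the paper's proof: with $\pgot=\Flux_X$ and target flux $\pgot'$, set $\qgot_t=\imath\bigl((1-t)\pgot+t\pgot'\bigr)$, apply Theorem~\ref{th:main1} to the constant family $\Phi_t=2\di X$ to obtain multipliers $h_t\colon M\to\C_*$ with $h_0=1$, and put $X_t(p)=X(p_0)+2\int_{p_0}^p\Re(h_t\,\di X)$; when the Gauss map is not full, replace $\C^n$ by $\Lambda=\span\bigl(\di X/\theta(M)\bigr)$ and take $\pgot'=0$. This is exactly what the paper does.

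Your third paragraph is not in the paper. The paper invokes Theorem~\ref{th:main1} directly once $\Lambda=\C^n$, with no preliminary deformation. You are right that the paper's \emph{definition} of a full $1$-form (image of $\Phi/\theta$ missing every affine hyperplane) is strictly stronger than $\span\bigl(\Phi/\theta(M)\bigr)=\C^n$, so there is a formal mismatch between the stated hypothesis of Theorem~\ref{th:main1} and what is available here. But if you trace through Lemmas~\ref{lem:spray-loops}, \ref{lem:periods}, and~\ref{lem:existence-sprays}, the only way fullness is ever used is to produce points $s_1,\dots,s_N$ on an arc with $\span\{f(s_i)\}=\C^n$; this depends only on the \emph{linear} span and is unaffected by multiplication by a nowhere-vanishing scalar, since $\span\{\varphi(s_i)f(s_i)\}=\span\{f(s_i)\}$. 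The identity principle transfers the linear-span condition from $M$ to any arc on which $f$ is holomorphic. So the machinery behind Theorem~\ref{th:main1} already runs on the weaker hypothesis, and the paper's direct application is justified in substance; your preliminary deformation is unnecessary. It is also only a sketch: the genericity-and-path-connectedness claim you make would itself require a period-control mechanism to be made rigorous, and you have not indicated how to build one without the very fullness you are trying to arrange.
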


It has been proved very recently in \cite{ForstnericLarusson2016} that the inclusion of the space of real parts
of all nonflat holomorphic null curves $M\to\C^n$ into the space of all nonflat conformal minimal immersions 
$M\to\R^n$ satisfies the parametric h-principle with approximation; in particular, it is a weak homotopy equivalence, 
and a strong homotopy equivalence if the homology group $H_1(M;\Z)$ is finitely generated. 
(Both spaces carry the compact-open topology.) However, the constructions in the papers 
\cite{AlarconForstneric2016CR,ForstnericLarusson2016} 
do not preserve the Gauss map, so this particular aspect of Corollary \ref{cor:Crelle-sameGaussmap} is new. 

In the proof of Theorem \ref{th:main1} we exploit the fact that $\C_*=\c\setminus\{0\}$ and the punctured
null quadric $\Agot_*$ \eqref{eq:nullquadric} are {\em Oka manifolds}. 
(See the survey \cite{ForstnericLarusson2011NY} for an introduction to Oka theory and the
monograph \cite{Forstneric2017E} for a comprehensive treatment.)
Furthermore, in order to achieve the correct periods of the $1$-forms $h_t\Phi_t$ (see \eqref{eq:periods}), 
we apply a technique similar to Gromov's {\em convex integration lemma} (compare to 
\cite[Section 3]{ForstnericLarusson2016} and the references therein), but using the 
$\C$-linear span instead of the convex hull.


\subsection*{Organization of the paper} 
In Sections \ref{sec:paths} and \ref{sec:sprays} we prove the technical lemmas
which are used to obtain the suitable family of multipliers $h_t$ in Theorem \ref{th:main1}.
In Section \ref{sec:proof} we prove Theorem \ref{th:main1}, and in Section \ref{sec:finalsec}
we show how it implies Theorems \ref{th:Gauss-n}, \ref{th:avoiding} and Corollary \ref{cor:Crelle-sameGaussmap}. 
In Section \ref{sec:structure} we prove that, for a compact bordered Riemann surface $M$, the space of all conformal minimal 
immersions $M\to\R^n$ with prescribed generalized Gauss map and flux carries the structure of a real analytic Banach manifold
(see Theorem \ref{th:structure}). Finally, in Section \ref{sec:path} we identify the path components of the space of all 
conformal minimal immersions $M\to\r^n$ for any $n\ge 3$ (see Theorem \ref{th:pathconnected}).


\subsection*{Notation} 
We shall use the notation $\c_*=\c\setminus\{0\}$, $\c^n_*=\c^n\setminus\{0\}$ $(n\ge 2)$, $\c^0=\{0\}$,
$\imath=\sqrt{-1}$, $\z_+=\{0,1,2,...\}$, $\N=\{1,2,\ldots\}$, and $\r_+=[0,+\infty)$. 

If $M$ is an open Riemann surface and $A\subset M$ is a subset, we denote by $\Ocal(A)$ the space of functions $A\to\c$ 
which are holomorphic on an open neighborhood (depending of the function) of $A$ in $M$. Similarly, by a holomorphic 
$1$-form on $A$ we mean the restriction to $A$ of a holomorphic $1$-form on an unspecified open neighborhood of $A$ in $M$. 

If $A$ is a compact smoothly bounded domain in $M$ and $r\in\z_+$, we denote by $\Ascr^r(A)$ the space of $\Cscr^r$ 
functions $A\to\c$ which are holomorphic in the interior $\mathring A=A\setminus bA$. 
Similarly, we define the spaces $\Ocal(A,Z)$ and $\Ascr^r(A,Z)$ of maps $A\to Z$ to a complex manifold $Z$. 
For simplicity we write $\Ascr(A)=\Ascr^0(A)$ and $\Ascr(A,Z)=\Ascr^0(A,Z)$.


\section{Multiplier functions on families of paths} 
\label{sec:paths}

In this section we prove a couple of technical lemmas which allow us to construct 
families of multipliers $h_t$ \eqref{eq:periods} in Theorem \ref{th:main1}. 
We first explain how to construct such multipliers on the interval $I=[0,1]\subset \R$; in the following section 
we use these results in the geometric setting which arises in the proof of Theorem \ref{th:main1}.
The main result of this section is Lemma \ref{lem:periods} whose proof proceeds in two steps: first we construct 
multipliers which give approximately correct values, and then we use Lemma \ref{lem:spray-loops} to correct
the error. 

Recall that a path $f\colon I=[0,1]\to\c^n$ is said to be {\em full} if the $\c$-linear span of its image
equals $\c^n$. The path is {\em nowhere flat} if for any proper affine subspace $\Sigma\subset \c^n$
the set $\{s\in I\colon f(s)\in \Sigma\}$ is nowhere dense in $I$. Note that a real analytic path which is
full is also nowhere flat; the converse holds for any continuous path.

For $n\in\n$ consider the period map $\Pcal\colon \Cscr(I,\c^n)\to\c^n$ defined by
\[
        \Pcal(f)=\int_0^1 f(s)\, ds\in\c^n,\quad f\in \Cscr(I,\c^n).
\]
We begin with the following existence result for {\em period dominating multiplier functions} for families of paths
$[0,1]\to\C^n$.

\begin{lemma}\label{lem:spray-loops}
Let $I'$ be a nontrivial closed subinterval of $I=[0,1]$,
let $Q$ be a compact Hausdorff space, and let $n\in\n$. 
Given a continuous map $f\colon Q\times I\to\c^n$ such that $f(q,\cdot)$ is full 
on $I'$ for every $q\in Q$, there exist finitely many continuous functions $g_1,\ldots,g_N\colon I\to\c$ $(N\ge n)$, 
supported on $I'$, such that the function $h\colon \c^N\times I\to\c$ given by
\[
             h(\zeta,s):=\prod_{i=1}^N \big(1+\zeta_i g_i(s) \big),\quad \zeta=(\zeta_1,\ldots,\zeta_N)\in\c^N,\; s\in I
\]
is a {\em period dominating multiplier of $f$}, meaning that the map
\begin{equation}\label{eq:period-domination}
      \frac{\di}{\di \zeta} \Pcal (h(\zeta,\cdot)f(q,\cdot)) \big|_{\zeta=0} : T_0\c^N\cong\c^N\to\c^n
      \ \ \text{is surjective for every $q\in Q$.}
\end{equation}
\end{lemma}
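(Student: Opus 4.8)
The plan is to build the multiplier functions $g_i$ from bump functions concentrated near finitely many points of $I'$, chosen so that the linearization of the period map is surjective. First I would observe that the derivative in \eqref{eq:period-domination} can be computed explicitly: since $h(0,s)=1$, we have $\frac{\di}{\di\zeta_i}h(\zeta,s)\big|_{\zeta=0}=g_i(s)$, so
\[
   \frac{\di}{\di\zeta_i}\Pcal\big(h(\zeta,\cdot)f(q,\cdot)\big)\Big|_{\zeta=0}=\int_0^1 g_i(s)\,f(q,s)\,ds\in\c^n.
\]
Thus \eqref{eq:period-domination} asks that the $N$ vectors $\int_0^1 g_i(s)f(q,s)\,ds$ span $\c^n$ for every $q\in Q$. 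So the whole task reduces to choosing finitely many continuous functions $g_1,\ldots,g_N$ supported in $I'$ such that, for each fixed $q$, these integrals span $\c^n$.

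Next I would fix a single parameter $q_0\in Q$. Since $f(q_0,\cdot)$ is full on $I'$, there exist points $s_1,\ldots,s_n$ in the interior of $I'$ such that $f(q_0,s_1),\ldots,f(q_0,s_n)$ are $\c$-linearly independent in $\c^n$ — indeed, if no $n$ values were independent, the image $f(q_0,I')$ would lie in a proper subspace, contradicting fullness. Choosing nonnegative bump functions $g_j$ supported in small disjoint neighborhoods $U_j\subset \mathring{I'}$ of $s_j$ with $\int g_j=1$, the integrals $\int g_j(s)f(q_0,s)\,ds$ are as close as we like to $f(q_0,s_j)$ by continuity, hence linearly independent; so with $N=n$ the map in \eqref{eq:period-domination} is surjective at $q_0$. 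Surjectivity is an open condition on $q$ (the relevant $n\times n$ minors are continuous in $q$ and some minor is nonzero at $q_0$), so it persists on an open neighborhood $V_{q_0}\subset Q$.

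Then I would run a compactness argument. Cover $Q$ by finitely many such open sets $V_{q_1},\ldots,V_{q_m}$, each with its own finite collection of bump functions $\{g^{(k)}_1,\ldots,g^{(k)}_n\}$ supported in $I'$ that makes \eqref{eq:period-domination} hold on $V_{q_k}$. Take $\{g_1,\ldots,g_N\}$ with $N=mn$ to be the union of all these collections. For any $q\in Q$, it lies in some $V_{q_k}$, and the subset of the $g_i$ coming from that collection already makes the $N$-by-$n$ map surjective; adjoining the remaining functions only enlarges the image, so surjectivity holds for every $q\in Q$, and trivially $N\ge n$. This gives the desired $h(\zeta,s)=\prod_{i=1}^N(1+\zeta_i g_i(s))$.

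\textbf{Main obstacle.} The only subtle point is making sure the bump functions can be chosen so that the spanning property is simultaneously robust in $q$; this is handled by noting that surjectivity of a continuous family of linear maps $\c^N\to\c^n$ is an open condition (equivalently, some $n\times n$ subdeterminant is nonzero, which is an open condition since the matrix entries $\int g_i(s)f(q,s)\,ds$ depend continuously on $q$), so a pointwise construction upgrades to a local one and then, by compactness of $Q$, to a global one by taking unions of the finitely many local families. A secondary technical point is keeping all supports inside the \emph{interior} of $I'$ (or at least within $I'$), which is automatic since the points $s_j$ can be taken in $\mathring{I'}$ and the bumps taken with sufficiently small support; I do not expect either point to cause real difficulty.
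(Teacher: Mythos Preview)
Your proposal is correct and follows essentially the same approach as the paper: compute the $\zeta$-derivative at $0$ as $\int_0^1 g_i(s)f(q,s)\,ds$, take the $g_i$ to be bump functions concentrated near points where the values of $f(q,\cdot)$ span $\c^n$, and use compactness of $Q$ to make this uniform. The only organizational difference is that the paper first invokes compactness to find a single finite set of points $s_1,\ldots,s_N\in\mathring{I'}$ with $\span\{f(q,s_i)\}=\c^n$ for all $q\in Q$ simultaneously, and then builds one family of bumps, whereas you build local families of bumps and take their union; both packagings of the compactness argument work equally well.
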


\begin{remark}\label{rem:stability}
Note that \eqref{eq:period-domination} is an open condition which remains valid with the same function $h$ 
if we replace $f$ by any $f' \in \Cscr(Q\times I, \c^n)$ sufficiently close to $f$.
\end{remark}

\begin{proof}
Let $N\ge n$ be an integer and, for each $i\in\{1,\ldots,N\}$, let $g_i\colon I\to\c$ be a continuous function supported 
on $I'$; both the number $N$ and the functions $g_i$ will be specified later. 
Let $\zeta=(\zeta_1,\ldots,\zeta_N)$ be holomorphic coordinates on $\c^N$. Set
\begin{equation}\label{eq:h-spray}
       h(\zeta,s):=\prod_{i=1}^N \big(1+\zeta_i g_i(s) \big),\quad (\zeta,s)\in \c^N\times I,
\end{equation}
and observe that
\begin{equation}\label{eq:dih}
       \left. \frac{\di h(\zeta,s)}{\di \zeta_i} \right|_{\zeta=0}=  g_i(s),\quad s\in I,\ i\in\{1,\ldots,N\}.
\end{equation}
Let $\wt\Pcal\colon Q\times \c^N\to \c^n$ be the map given by 
\[
	\wt\Pcal(q,\zeta)=\Pcal(h(\zeta,\cdot)f(q,\cdot))=\int_0^1 h(\zeta,s) f(q,s) \, ds,\quad (q,\zeta)\in Q\times\c^N.
\]
By \eqref{eq:dih} we have
\begin{equation}\label{eq:diQ}
     \left. \frac{\di \wt\Pcal(q,\zeta)}{\di \zeta_i}\right|_{\zeta=0}  = \int_0^1 \left. \frac{\di h(\zeta,s)}{\di \zeta_i}  \right|_{\zeta=0} f(q,s)\, ds  
     =   \int_0^1  g_i(s) f(q,s)\, ds.
\end{equation}

We now explain how to choose the functions $g_1,\ldots, g_N$.
Since $f(q,\cdot)$ is full 
on $I'$ for every $q\in Q$, compactness of $Q$ and continuity of $f$ ensure that 
there are distinct points $s_1,\ldots,s_N\in \mathring{I'}$ 
for a big $N$ such that
\begin{equation}\label{eq:span-N}
       \span\{f(q,s_1),\ldots, f(q,s_N)\}=\c^n\quad \text{for all $q\in Q$}.
\end{equation}
Let $\epsilon>0$ be small enough such that the intervals $[s_i-\epsilon,s_i+\epsilon]$ $(i=1,\ldots,N)$ are pairwise disjoint 
and contained in $I'$; the precise value of $\epsilon$ will be specified later. Let $g_i\colon I\to\c$ be any 
continuous function supported on $(s_i-\epsilon,s_i+\epsilon)\subset I'$ and satisfying 
\begin{equation}\label{eq:intg1}
  	 \int_0^1 g_i(s)\, ds=\int_{s_i-\epsilon}^{s_i+\epsilon} g_i(s)\, ds=1.
\end{equation}
To conclude the proof, it remains to show that the derivative
\[
	\frac{\di}{\di \zeta} \Pcal (h(\zeta,\cdot)f(q,\cdot)) \big|_{\zeta=0} : T_0\c^N\cong\c^N\to\c^n
\] 
is surjective for every $q\in Q$. Since $\Pcal(h(\zeta,\cdot)f(q,\cdot))=\wt\Pcal(q,\zeta)$, it suffices to prove that
\[
        \frac{\di}{\di \zeta} \wt\Pcal (q,\zeta) \big|_{\zeta=0} \colon T_0\c^N\cong\c^N\to\c^n\ \ 
       \text{is surjective for every $q\in Q$.}
\]
Indeed, for small $\epsilon>0$ we have in view of \eqref{eq:diQ} and \eqref{eq:intg1} that
\[
    \left. \frac{\di \wt\Pcal(q,\zeta)}{\di \zeta_i}\right|_{\zeta=0}=  \int_0^1 g_i(s) f(q,s)\, ds \approx f(q,s_i)\ \ 
    \text{for all $q\in Q$ and $i\in\{1,\ldots,N\}$.}
\]
Therefore, if $\epsilon>0$ is chosen small enough, condition \eqref{eq:span-N} guarantees that 
\[
	\span \left\{ \frac{\di \wt\Pcal(q,\zeta)}{\di \zeta_1}\Big|_{\zeta=0},\ldots, 
	\frac{\di \wt\Pcal(q,\zeta)}{\di \zeta_N}\Big|_{\zeta=0} \right\} = \C^n\quad 
	\text{for all $q\in Q$}.
\]
This concludes the proof of Lemma \ref{lem:spray-loops}.
\end{proof}

%
%

We now show the existence of multiplier functions for families of paths which enable us to prescribe 
the periods. Recall that $I=[0,1]$.

\begin{lemma}\label{lem:periods}
Let $f\colon I^2=I\times I \to \c^n$ and $\alpha\colon I\to \c^n$ be continuous maps.
Assume that the path $f_t:= f(t,\cdotp) \colon I\to\c^n$ is nowhere flat for every $t\in I$. 
Then there exists a continuous function $h\colon I^2\to \c_*$ such that $h(t,s)=1$ for $t\in I$ and $s\in  \{0,1\}$ and
\begin{equation}\label{eq:exact}
	\int_0^1 h(t,s) f(t,s)\, ds = \alpha(t), \quad t\in[0,1].
\end{equation}
If in addition we have that $\int_0^1 f(0,s)\, ds= \alpha(0)$,  then $h$ can be chosen such that
$h(0,s)=1$ for $s\in [0,1]$.
\end{lemma}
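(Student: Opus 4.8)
The strategy is a two-step approximation-and-correction scheme, using Lemma \ref{lem:spray-loops} as the correction device. First I would reduce to the case where $f$ is close to a real-analytic (hence, by the remark following the definition of nowhere flat, full) map. Since $f_t$ is nowhere flat for every $t$, it is in particular full on $I$ for every $t$; fixing a nontrivial subinterval $I'\subset \mathring I$, nowhere flatness of $f_t$ on $I$ gives fullness of $f_t$ on $I'$ after possibly shrinking $I'$ — more precisely, by a compactness argument (using that $Q=I$ is compact and $f$ continuous, as in the proof of Lemma \ref{lem:spray-loops}) one can choose a single subinterval $I'$ on which $f_t$ stays full for all $t$ in a neighborhood of any given parameter, and then patch. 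Applying Lemma \ref{lem:spray-loops} with $Q=I$ (the parameter space), $I'$ as above, and the family $f$, I obtain continuous functions $g_1,\ldots,g_N\colon I\to\c$ supported in $\mathring{I'}$ and the spray $h(\zeta,s)=\prod_{i=1}^N(1+\zeta_i g_i(s))$, which is a period-dominating multiplier: the map $\zeta\mapsto \Pcal(h(\zeta,\cdot)f(t,\cdot))$ has surjective differential at $\zeta=0$ for every $t\in I$. By Remark \ref{rem:stability} this domination persists under small perturbations of $f$.

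Next comes the approximation step. I would find a continuous function $h_0\colon I^2\to\c_*$ with $h_0(t,0)=h_0(t,1)=1$ such that the approximate period $\int_0^1 h_0(t,s)f(t,s)\,ds$ is as close as desired to $\alpha(t)$, uniformly in $t$. The simplest realization: take $h_0$ to be a product of functions of the form $e^{c(t)\psi(s)}$ where $\psi$ is a fixed bump supported in a tiny interval and $c(t)$ is chosen continuously; by concentrating the bump near a point $s_*(t)$ where $f(t,s_*(t))$ points in a suitable direction (here nowhere flatness guarantees, for each $t$, a dense set of usable directions spanning $\c^n$), one can steer the integral arbitrarily in $\c^n$ while keeping $h_0=1$ near the endpoints. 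One must be slightly careful to do this continuously in $t$ and to preserve the normalization at $s\in\{0,1\}$; arranging the bumps to be supported away from $\{0,1\}$ handles the latter automatically. The outcome is $h_0$ with $\|\int_0^1 h_0(t,\cdot)f(t,\cdot)\,ds - \alpha(t)\| < \delta$ for all $t$, with $\delta$ as small as we like.

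Finally, the correction step. Write $f'(t,s)=h_0(t,s)f(t,s)$; if $h_0$ is close to $1$ then $f'$ is close to $f$, so by Remark \ref{rem:stability} the spray $h(\zeta,\cdot)$ from Lemma \ref{lem:spray-loops} is still period-dominating for $f'$. Define $F\colon I\times\c^N\to\c^n$ by $F(t,\zeta)=\int_0^1 h(\zeta,s)f'(t,s)\,ds - \alpha(t)$. Then $F(t,0)=\int_0^1 h_0(t,s)f(t,s)\,ds-\alpha(t)$ has norm $<\delta$, and $\partial_\zeta F(t,0)$ is surjective with a uniform lower bound on $t\in I$; by the implicit function theorem (with parameters), provided $\delta$ is small enough there is a continuous $\zeta(t)$, small, with $F(t,\zeta(t))=0$ for all $t$, i.e. $\int_0^1 h(\zeta(t),s)h_0(t,s)f(t,s)\,ds=\alpha(t)$. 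Set $h(t,s):=h(\zeta(t),s)h_0(t,s)$; since $g_i$ vanish near $s\in\{0,1\}$ and $h_0(t,\cdot)=1$ there, we get $h(t,s)=1$ for $s\in\{0,1\}$, and $h$ is $\c_*$-valued for $\zeta(t)$ small. For the last assertion, when $\int_0^1 f(0,s)\,ds=\alpha(0)$ I would arrange the approximation step so that $h_0(0,\cdot)\equiv 1$ (choose the steering constants $c(0)=0$), whence $F(0,0)=0$ and, by uniqueness in the implicit function theorem, $\zeta(0)=0$, giving $h(0,s)=1$ for all $s$. The main obstacle is the approximation step — steering the integral $\int_0^1 h_0 f\,ds$ continuously in the parameter $t$ while respecting the endpoint normalization and only using that each $f_t$ is nowhere flat rather than genuinely full; this requires a careful, local-in-$t$ choice of the direction and location of the bumps, patched together with a partition of unity.
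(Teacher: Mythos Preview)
Your overall architecture---approximate, then correct via the spray of Lemma~\ref{lem:spray-loops} and the implicit function theorem---matches the paper's. The gap is that your scheme is implicitly perturbative and does not reach an arbitrary target $\alpha$. In the correction step you write ``if $h_0$ is close to $1$ then $f'$ is close to $f$'' and invoke Remark~\ref{rem:stability}; but nothing forces $h_0$ near $1$, since $\alpha(t)$ may lie far from $\int_0^1 f_t(s)\,ds$. Your approximation step has the same defect in disguise: differentiating $\int_0^1 e^{c(t)\psi}f_t$ in $c$ at $c=0$ reproduces exactly the spray mechanism, so it only moves the period within a neighbourhood of $\int_0^1 f_t$; for large $c$ you give no argument that the map $(c_1,\ldots,c_N)\mapsto \int_0^1\prod_k e^{c_k\psi_k}f_t$ is surjective onto $\c^n$, and it is not obvious. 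As written, the proof covers only $\alpha$ close to $\int_0^1 f_t$.

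The paper resolves this with two devices. First, it decouples approximation from correction by splitting the $s$-interval as $I=I_1\cup I_2$: the approximate multiplier is arbitrary on $I_1$ and identically $1$ on $I_2$, while the spray of Lemma~\ref{lem:spray-loops} is supported in $I_2$. Since the $\zeta$-derivative of the spray at $\zeta=0$ depends only on $(h_0 f)|_{I_2}=f|_{I_2}$, period domination for $h_0 f$ coincides with that for $f$, however large $h_0$ is on $I_1$; this breaks the circularity you run into. Second, the approximate multiplier is not built from small bumps but from a \emph{step-function} construction: partition $I$ finely enough that the averages $V_j(t)=\int_{s_{j-1}}^{s_j} f_t\,ds$ span $\c^n$ for every $t$; choose a continuous section $t\mapsto (g_1(t),\ldots,g_N(t))$ of the affine bundle $\{\sum_j g_j V_j(t)=\alpha(t)\}$; perturb so each $g_j(t)\in\c_*$ (here one-dimensionality of the parameter space $I$ is used); and then smooth the resulting $\c_*$-valued step function in $s$ to a continuous $\c_*$-valued function equal to $1$ at the division points via homotopies in $\c_*$. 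Because the map $(g_1,\ldots,g_N)\mapsto\sum g_jV_j(t)$ is linear and onto, this hits \emph{any} $\alpha$. A minor separate point: ``uniqueness in the implicit function theorem'' is not available since typically $N>n$; you should construct $\zeta(t)$ via a continuous right inverse of $\partial_\zeta F(t,0)$, which then yields $\zeta(0)=0$ automatically when $F(0,0)=0$.
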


\begin{proof}
It suffices to prove that for any $\epsilon>0$ there exists a function $h\colon I^2\to \c_*$ such that
\begin{equation}\label{eq:approximate}
	\left| \int_0^1 h(t,s) f(t,s)\, ds - \alpha(t)\right| <\epsilon, \quad t\in[0,1].
\end{equation}
The exact result \eqref{eq:exact} can then be obtained by writing the parameter interval for the $s$-variable as
a union $I=I_1\cup I_2$, where $I_1$ and $I_2$ are nontrivial subintervals with a common endpoint 
(for example, $I_1=[0,1/2]$ and $I_2=[1/2,1]$) and applying the approximate result \eqref{eq:approximate} with a 
sufficiently small $\epsilon>0$ on $I_1$ and a period dominating argument on  $I_2$ 
(see Lemma \ref{lem:spray-loops}) in order to correct the error. 

Since $f_t$ is nowhere flat and hence full for each fixed $t\in[0,1]$, there is a division $0=s_0<s_1<\cdots <s_N=1$ of $I$ such that 
\[
	\span\{f_t(s_1),\ldots, f_t(s_N)\}=\c^n. 
\]
The same condition then holds for each $t' \in I$ sufficiently close to $t$. 
By adding more division points and using compactness of $I$ we obtain a division satisfying the above condition for all $t\in I$. Set 
\[
	V_j(t) = \int_{s_{j-1}}^{s_j} f_t(s)\, ds,\quad  j=1,\ldots,N. 
\]
Note that $V_j(t)$ is close to $f_t(s_j)(s_j-s_{j-1})$ if the segments are short.
By passing to a finer division if necessary we may therefore assume that
\[
	\span\bigl\{V_1(t),\ldots, V_N(t)\bigr\} =\c^n,\quad t\in I. 
\]
For each $t\in I$ we let $\Sigma_t\subset \c^N$ denote the affine complex hyperplane defined by 
\[
	\Sigma_t= \biggl\{(g_1,\ldots,g_N) \in \c^N : \sum_{j=1}^N g_j V_j(t) = \alpha(t) \biggr\}.
\]
Clearly, there exists a continuous map $g=(g_1,\ldots, g_N)\colon I\to \c^N$ such that 
$g(t)\in \Sigma_t$ for every $t\in I$. (We may view $g$ as a section of the affine bundle
over $I$ whose fiber over the point $t$ equals $\Sigma_t$.) This can be written as follows:
\[
	\sum_{j=1}^N \int_{s_{j-1}}^{s_j} g_j(t) f_t(s)\, ds = \alpha(t),\quad t\in I.
\]
Note that $\sum_{j=1}^N V_j(t) = \int_0^1 f_t(s)\, ds$. Hence, if $\int_0^1 f(0,s)\, ds= \alpha(0)$ 
then $g$ can be chosen such that $g(0)=(1,\ldots,1)\in \c^N$.
We assume in the sequel that this holds since the proof is even simpler otherwise.

By a small perturbation we may assume that $g_j(t)\in \c_*$ for every $t\in I$ and $j=1,\ldots, N$.
(At this point we need that the parameter space $I$ is one-dimensional.) This changes the
exact condition in the above display to the approximate condition 
\begin{equation}\label{eq:epsilon2}
	\left| \, \sum_{j=1}^N \int_{s_{j-1}}^{s_j} g_j(t) f_t(s)\, ds - \alpha(t)\right| < \frac{\epsilon}{2}, \quad t\in I.
\end{equation}
We shall now view the vector $g(t)=(g_j(t))_j\in \C^N$ for every fixed $t\in I$ as a step function of the
variable $s\in I$ which equals the constant $g_j(t)$ on the $j$-segment $s\in [s_{j-1},s_{j}]$ for every $j=1,\ldots,N$. 
Next, we approximate this step function by a continuous function $h_t=h(t,\cdotp)\colon I\to\c_*$ which agrees with the step
function, except near the division points $s_0,s_1,\ldots,s_{N}$ where we modify it in order to make
it continuous and to assume the value $1$ at the endpoints $0,1$ of $I$. 
Replacing the step function in \eqref{eq:epsilon2} by this new function $h(t,s)$ will cause
an  error of size $<\epsilon/2$ provided  the modification is supported on sufficiently 
short segments around the division points. This will yield the estimate \eqref{eq:approximate}.

We now explain the details. Let $C>1$ be chosen such that 
\[
	\max_{(t,s)\in I^2} |f(t,s)|\le C,\quad \max_{t\in I,\, j=1,\ldots,N} |g_j(t)| \le C.
\]
Due to simple connectivity of $I$ we can find for every $j=1,\ldots,N$ a homotopy of maps
$g_{j,\tau} \colon I\to \c_*$ $(0\le \tau \le 1)$ such that the following conditions hold:
\begin{itemize}
\item $g_{j,0}(t)=1$ for all $t\in I$,
\vspace{1mm}
\item $g_{j,1}(t)=g_j(t)$  for all $t\in I$,
\vspace{1mm}
\item $g_{j,\tau}(0)=1$ for all $\tau\in [0,1]$ (the homotopy is fixed at $t=0$), and
\vspace{1mm} 
\item $|g_{j,\tau}(t)| \le C$ for all $t\in I$ and $\tau \in [0,1]$. 
\end{itemize}
This holds for example if $g_{j,\tau}(t)=g_j(\tau t)$. Pick a number $\eta>0$ such that 
\begin{equation}\label{eq:eta}
	4C(C+1) N \eta < \epsilon.
\end{equation}
For each $t\in I$ and $j=1,\ldots N$ we define the function $h(t,\cdotp)\colon [s_{j-1},s_j]  \to \c_*$ as follows:
\[
	h(t,s) = \begin{cases} 
			g_{j,(s-s_{j-1})/\eta}(t), & s\in [s_{j-1},s_{j-1}+\eta]; \\
			g_j(t),                           & s\in [s_{j-1}+\eta,s_j-\eta]; \\
			g_{j,(s_j-s)/\eta}(t),      & s\in [s_{j}-\eta,s_j].
                    \end{cases} 
\]
This means that $h(t,\cdotp)$ spends most of its time (the middle segment $[s_{j-1}+\eta,s_j-\eta]$) at the point $g_j(t)$, 
and it travels between the point $1\in \c_*$ (where it is at the endpoints $s=s_{j-1}$ and $s=s_j$) 
and the point $g_j(t)$ along the trace of the path $\tau\mapsto g_{j,\tau}(t)\in\c_*$.
This defines a continuous function $h\colon I^2\to \c_*$ satisfying 
\[
	|h(t,s)|\le C\quad \text{for all $(t,s)\in I^2$.}
\]
It follows easily from \eqref{eq:epsilon2}, \eqref{eq:eta}, the definition of $h$ and the last estimate that 
$h$ satisfies the condition \eqref{eq:approximate}. This completes the proof.
\end{proof}


\section{Period dominating families of multipliers on admissible sets}
\label{sec:sprays}

The main result of this section is Lemma \ref{lem:existence-sprays} which provides small deformations 
of families of multipliers that make small but arbitrary changes in their integrals. This replaces
Lemma \ref{lem:spray-loops} (which pertains to multipliers on the interval $[0,1]$) in the geometric setting 
that arises in proving the inductive step in Theorem \ref{th:main1}. 
The proof of Lemma \ref{lem:existence-sprays} uses the construction from 
Lemma \ref{lem:spray-loops} together with the Mergelyan approximation theorem on admissible
sets in a Riemann surface; see Definition \ref{def:admissible}.
In the proof of Theorem \ref{th:main1} we shall combine Lemmas \ref{lem:periods} and \ref{lem:existence-sprays}.

We begin with some preparations.

\begin{definition}\label{def:admissible}
A nonempty compact subset $S$ of an open Riemann surface $M$ is said {\em admissible} if it is Runge in $M$ 
and of the form $S=K\cup \Gamma$, where $K$
is the union of finitely many pairwise disjoint  smoothly bounded compact domains in $M$ 
and $\Gamma:=\overline{S\setminus K}$ is a finite union of pairwise disjoint smooth Jordan arcs
meeting $K$ only at their endpoints (or not at all) and such that their intersections with the boundary 
$bK$ of $K$ are transverse.
\end{definition}

Let $S=K\cup\Gamma$ be an admissible subset of an open Riemann surface $M$. 
Given an integer $r\in\z_+$ and a complex submanifold $Z$ of $\c^n$,  we denote by 
\begin{equation}\label{eq:Fscr}
	\Ascr(S,Z)
\end{equation}
the set of all continuous functions $S\to Z$ which are holomorphic on $\mathring S =\mathring K$. 

Given a basis $\Bcal=\{C_1,\ldots,C_l\}$ of the homology group $H_1(S;\z)$, a holomorphic $1$-form $\theta$ 
vanishing nowhere on $M$, and a function $f\in\Ascr(S,\c^n)$ for some $n\in \n$, we define the {\em period map associated 
to $(\Bcal,f,\theta)$} as the map 
\begin{equation}\label{eq:periodmap}
    \Pcal^f=(\Pcal^f_1,\ldots,\Pcal^f_l) : \Ascr(S)\to (\c^n)^l
\end{equation}
given by
\begin{equation}\label{eq:periodmapj}
    \Pcal^f_j(h)=\int_{C_j} hf\theta\in\c^n,\quad h\in \Ascr(S),\ j=1,\ldots,l.
\end{equation}
It is clear that $\Pcal^f_j(h)$ lies in $\span(f(S))$ and it only depends on the homology class of $C_j$ for $j=1,\ldots,l$.
If $S$ is connected but not simply connected, then it is easily seen that there is a collection $C_1,\ldots,C_l$ of smooth 
Jordan curves in $\mathring S\cup\Gamma$ forming a homology basis $\Bcal$ of $S$ such that the support 
$|\Bcal|=\bigcup_{j=1}^l C_j$ of $\Bcal$ is a Runge subset of $M$ and each curve $C_j$ contains a nontrivial arc 
$\wt C_j$ which is disjoint from $C_k$ for all $k\neq j$.

%
%

Given a compact Hausdorff space $Q$, we let $\Ascr(Q\times S,Z)$ denote the space of 
continuous maps $f\colon Q\times S\to Z$ such that $f(q,\cdot)\in\Ascr(S,Z)$ for all $q\in Q$.

\begin{lemma}\label{lem:existence-sprays}
Let $S=K\cup\Gamma$ be a connected admissible subset of an open Riemann surface $M$,
and denote by $l\in\z_+$ the dimension of the first homology group $H_1(S;\z)$. 
Also, let $\theta$ be a   nowhere vanishing holomorphic $1$-form on $M$, and let $Q$ be a compact Hausdorff space. 
Assume that $f\colon Q\times S\to \c^n$ is a map of class $\Ascr(Q\times S)$ such that $f(q,\cdot)$ is full on $K$ 
and nowhere flat on $\Gamma$ for all $q\in Q$. There exist finitely many holomorphic functions $g_1,\ldots,g_N\in\Oscr(M)$ 
$(N\ge nl)$ such that the function $\Xi\colon \c^N\times M\to\c$ given by
\[
      \Xi(\zeta,p)=\prod_{i=1}^N \big(1+\zeta_i g_i(p)\big),\quad \zeta=(\zeta_1,\ldots,\zeta_N)\in\c^N,\; p\in M
\]
is a {\em period dominating multiplier of $f$}, meaning that for every $q\in Q$ the map
\begin{equation}\label{eq:period2}
    \c^N\ni\zeta\longmapsto \Pcal^{f,q}(\Xi(\zeta,\cdot))\in (\c^n)^l
\end{equation}
has maximal rank equal to $ln$ at $\zeta=0$.
(Here, $\Pcal^{f,q}$ is the period map associated to a fixed basis $\Bcal$ of $H_1(S;\z)$, the map $f(q,\cdot)$, and the
$1$-form $\theta$; see \eqref{eq:periodmap}, \eqref{eq:periodmapj}.)
\end{lemma}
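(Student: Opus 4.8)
The plan is to reduce Lemma~\ref{lem:existence-sprays} to the interval situation of Lemma~\ref{lem:spray-loops} by using the special homology basis mentioned just before the statement, namely a collection $C_1,\ldots,C_l$ of smooth Jordan curves in $\mathring S\cup\Gamma$ forming a basis of $H_1(S;\z)$ such that each $C_j$ contains a nontrivial arc $\wt C_j$ disjoint from all $C_k$, $k\neq j$, and with Runge support. First I would fix such a basis $\Bcal$ together with the $1$-form $\theta$, and parametrize each arc $\wt C_j$ as a map $\gamma_j\colon I'_j\to \mathring S\cup\Gamma$ on a closed subinterval $I'_j$ of some interval $I_j\cong[0,1]$ on which $C_j$ is parametrized. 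The key point is that since $f(q,\cdot)$ is full on $K$ and nowhere flat on $\Gamma$ for every $q$, and since $\wt C_j$ contains either a nontrivial arc of $\Gamma$ or a nontrivial arc in $\mathring K$ (where holomorphic fullness on a domain forces fullness on any nontrivial subarc by the identity principle), the pulled-back path $s\mapsto f(q,\gamma_j(s))(\theta/ds)$ is full on $I'_j$ for every $q\in Q$.

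Next I would invoke Lemma~\ref{lem:spray-loops}, applied to the compact space $Q$ and to each arc $\wt C_j$ separately, to produce for each $j$ finitely many continuous functions supported on the interior of $I'_j$ whose associated product multiplier is period dominating for the single period $\int_{C_j} (\cdot)\, f(q,\cdot)\theta$, with surjectivity onto $\c^n$ at $\zeta=0$ for all $q\in Q$. Concatenating these over $j=1,\ldots,l$, and noting that the arcs $\wt C_j$ are pairwise disjoint so that a perturbation supported near $\wt C_j$ only affects the $j$-th period, I would obtain finitely many continuous functions $\tilde g_1,\ldots,\tilde g_N$, with $N\ge nl$, defined on a neighborhood of $\bigcup_j \wt C_j$ in $S$, extended by $0$ to the rest of $S$ (possible since the $\wt C_j$ are contained in the relative interior of the curves and meet $bK$ transversally, so we can cut off smoothly), such that the product multiplier $\tilde\Xi(\zeta,\cdot)=\prod_i(1+\zeta_i\tilde g_i)$ has the property that $\zeta\mapsto \Pcal^{f,q}(\tilde\Xi(\zeta,\cdot))$ has rank $ln$ at $\zeta=0$ for every $q\in Q$; here the block-diagonal structure of the differential (the $i$-th function contributes only to the period of the curve whose arc supports it) makes maximality of rank transparent.

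The remaining step is to pass from functions defined and continuous near $\bigcup_j\wt C_j\subset S$ to genuinely holomorphic functions $g_1,\ldots,g_N\in\Oscr(M)$ on all of $M$. Here I would use that $S$ is Runge in $M$ together with the Mergelyan approximation theorem on admissible sets: each $\tilde g_i$, being continuous on $S$ and holomorphic on $\mathring S=\mathring K$ (indeed, I can first approximate the merely continuous $\tilde g_i$ on the arcs by functions holomorphic near $S$, using that $S$ is admissible and $|\Bcal|$ is Runge), can be approximated uniformly on $S$ by a function $g_i\in\Oscr(M)$. Since the period map $h\mapsto \Pcal^{f,q}(h)$ and its $\zeta$-differential at $0$ depend continuously on the functions $g_i$ in the sup-norm on $|\Bcal|\subset S$, and uniformly in $q\in Q$ by compactness, the rank condition \eqref{eq:period2}—which is the open, lower-semicontinuous condition of having a nonvanishing $ln\times ln$ minor, valid for all $q$ simultaneously on the compact $Q$—persists under a sufficiently close approximation. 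This yields the desired $g_1,\ldots,g_N\in\Oscr(M)$.

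The main obstacle I anticipate is the bookkeeping that guarantees the block structure of the differential survives both the concatenation over the different curves $C_j$ and the global holomorphic approximation: one must ensure that the support of the perturbation coming from the $j$-th arc genuinely stays away from the other curves $C_k$ in the basis (which is exactly why the basis with disjoint distinguished arcs $\wt C_j$ and Runge support $|\Bcal|$ was arranged beforehand), and that the Mergelyan step does not destroy the maximal rank—this is handled by the standard observation that maximal rank of a family of linear maps, parametrized by a compact space $Q$, is an open condition in the sup-norm topology on the relevant data, so a close enough approximation works uniformly in $q$. The interior-versus-boundary subtlety for arcs of $\Gamma$ that touch $bK$, and the need to smoothly cut off the supports there, is a minor technical point subsumed in the definition of admissible set.
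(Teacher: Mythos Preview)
Your approach is essentially the same as the paper's: reduce to Lemma~\ref{lem:spray-loops} on each distinguished arc $\wt C_j$, extend the resulting functions by zero to the rest of $|\Bcal|$, and then approximate by global holomorphic functions using that $|\Bcal|$ is Runge in $M$, relying on openness of the maximal-rank condition uniformly in $q\in Q$ to conclude. One small cleanup: your phrasing about extending $\tilde g_i$ by zero ``to the rest of $S$'' and calling the result ``holomorphic on $\mathring K$'' is inconsistent (a compactly supported bump along an arc in $\mathring K$ is not holomorphic there); the paper, and your own parenthetical correction, avoid this by extending only to $|\Bcal|$ and approximating directly from the Runge $1$-complex $|\Bcal|$ rather than from $S$.
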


\begin{proof}
If $S$ is simply connected then $l=0$ and hence $(\c^n)^l=\{0\}$. In this case the integer $N=1$ and the 
function $g_1\equiv 0$ (hence $\Xi\equiv 1$) satisfy the conclusion of the lemma.

Assume now that $S$ is not simply connected and so $l>0$.
Choose a collection $C_1,\ldots,C_l$ of smooth Jordan curves in $\mathring S\cup\Gamma$ forming a Runge homology basis 
$\Bcal$ of $S$ such that each curve $C_j$ contains a nontrivial arc $\wt C_j$ which is disjoint from $C_k$ for all $k\neq j$. 
For each $j=1,\ldots,l$ we fix a parameterization  $\gamma_j \colon [0,1] \to C_j$ with $\wt C_j\subset \gamma_j((0,1))$.
The assumptions on $f$ imply that the map $f(q,\cdot)\circ\gamma_j\colon[0,1]\to\c^n$ is nowhere flat for every 
$q\in Q$ and $j\in\{1,\ldots,l\}$. Denote by $|\Bcal|=\bigcup_{j=1}^l C_j\subset \mathring S\cup\Gamma$ the support 
of $\Bcal$. For each $q\in Q$ we denote by $\Pcal^{f,q}=(\Pcal^{f,q}_1,\ldots,\Pcal^{f,q}_l)\colon \Cscr(M)\to(\c^n)^l$
the map whose $j$-th component is given by
\[
       \Pcal_j^{f,q}(g)=\int_{C_j} gf(q,\cdot)\theta =\int_0^1 g(\gamma_j(s)) 
       f(q,\gamma_j(s))\theta(\gamma_j(s),\dot\gamma_j(s))\, ds, \quad g\in \Cscr(C_j).
\]
For each $j\in\{1,\ldots,l\}$, Lemma \ref{lem:spray-loops} furnishes an integer $N_j\ge n$ and continuous functions 
$g_{j,k}\colon C_j\to \c$ $(k=1,\ldots,N_j)$ supported on $\wt C_j$ such that the function 
\[
       h_j(\zeta_j,p)=\prod_{k=1}^{N_j}\big(1+\zeta_{j,k}g_{j,k}(p)\big),\quad 
       \zeta_j=(\zeta_{j,1},\ldots,\zeta_{j,N_j})\in \c^{N_j},\; p\in C_j
\]
satisfies
\begin{equation}\label{eq:diPjt}
       \frac{\di} {\di \zeta_j}  \Pcal_j^{f,q} (h_j(\zeta_j,\cdot))\big|_{\zeta_j=0} : 
       T_0\c^{N_j}\cong\c^{N_j}\to\c^n\ \  \text{is surjective for every $q\in Q$.}
\end{equation}
We extend each function $g_{j,k}$ by $0$ to $|\Bcal|\setminus C_j$, approximate $g_{j,k}\colon |\Bcal|\to\c$ by a 
holomorphic function $\wt g_{j,k}\in\Oscr(M)$, and set
\[
         \Xi(\zeta,p)=\prod_{j=1}^l\prod_{k=1}^{N_j}\big(1+\zeta_{j,k}\, \wt g_{j,k}(p)\big),\quad 
         \zeta=(\zeta_1,\ldots,\zeta_l)\in \c^{N_1}\times\cdots \times \c^{N_l},\; p\in M.
\]
Set $N=\sum_{j=1}^l N_j \ge nl$ and identify $\c^N=\c^{N_1}\times\cdots \times \c^{N_l}$.
If the approximation of $g_{j,k}$ by $\wt g_{j,k}$ is close enough for each $(j,k)$, then \eqref{eq:diPjt} ensures that
\[
     \frac{\di}{\di \zeta} \Pcal_j^{f,q} (\Xi(\zeta,\cdot))\big|_{\zeta=0} : T_0\c^N\cong\c^N\to(\c^n)^l\quad 
     \text{is surjective for every $q\in Q$.}
\]
This concludes the proof of Lemma \ref{lem:existence-sprays}.
\end{proof}


\section{Proof of Theorem \ref{th:main1}}\label{sec:proof}

In this section we prove Theorem \ref{th:main1} as a consequence of the following approximation result for multiplier functions.
Recall that $I=[0,1]$.

\begin{theorem}\label{th:main}
Assume that $S=K\cup\Gamma$ is an admissible subset of a connected open Riemann surface $M$
(see Definition \ref{def:admissible}), 
$\theta$ is a nowhere vanishing holomorphic $1$-form on $M$, and $n\in\n$ is  an integer. 
Let $f_t\colon M\to \c^n$ $(t\in I)$ be a continuous family of  full holomorphic maps and 
$\qgot_t\colon H_1(M; \z)\to\c^n$  be a continuous family of group homomorphisms.
Then, every continuous family of functions $\varphi_t\colon S\to\c_*$ $(t\in I)$ of class $\Ascr(S)$ such that
\[
       \int_\gamma \varphi_tf_t\theta=\qgot_t(\gamma)\quad \text{for every closed curve $\gamma\subset S$ and $t\in I$}
\]
may be approximated uniformly on $I\times S$ by continuous families of holomorphic functions $\wt \varphi_t\colon M\to\c_*$ 
such that
\[
       \int_\gamma \wt \varphi_tf_t\theta=\qgot_t(\gamma)\quad \text{for every closed curve $\gamma\subset M$ and $t\in I$}.
\]
Furthermore, if $\varphi_0$ extends to a holomorphic function $M\to\c_*$ such that $\int_\gamma \varphi_0f_0\theta=\qgot_0(\gamma)$ 
for all closed curves $\gamma\subset M$, then the homotopy $\wt \varphi_t$ can be chosen with $\wt \varphi_0=\varphi_0$.
\end{theorem}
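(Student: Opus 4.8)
The plan is to prove the theorem by a standard induction over an exhaustion of $M$ by an increasing sequence of admissible (indeed, smoothly bounded Runge) compact sets $S=S_0\subset S_1\subset S_2\subset\cdots$ with $\bigcup_j S_j=M$, where each $S_{j+1}$ is obtained from $S_j$ either by a \emph{noncritical} enlargement (no change of topology, so $H_1(S_{j+1};\z)=H_1(S_j;\z)$) or by a \emph{critical} enlargement that attaches a single handle or a single new component. At each stage we will have a continuous family $\varphi_t^{(j)}\colon S_j\to\c_*$ of class $\Ascr(S_j)$ with the correct periods $\int_\gamma \varphi_t^{(j)}f_t\theta=\qgot_t(\gamma)$ for all closed curves $\gamma\subset S_j$, and we will arrange that $\varphi_t^{(j+1)}$ approximates $\varphi_t^{(j)}$ as closely as we wish uniformly on $I\times S_j$. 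Choosing the approximation errors to decrease fast enough (a geometric series), the limit $\wt\varphi_t=\lim_j \varphi_t^{(j)}$ exists, is a continuous family of holomorphic functions $M\to\c$, has the prescribed periods on every closed curve in $M$ (each such curve lies in some $S_j$), and is nowhere zero provided we keep each correction uniformly small relative to $\min_{I\times S_j}|\varphi_t^{(j)}|$; this uses that $\c_*$ is open. The ``furthermore'' clause is handled by simply not moving the $t=0$ slice: since $\varphi_0$ already extends to $M$ with correct periods, we run the whole induction keeping $\varphi_t^{(j)}=\varphi_0$ at $t=0$ throughout, which is possible because all the deformation tools below can be taken to be the identity at $t=0$.

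The heart of the argument is the inductive step, and here the two cases use the two lemmas of Sections~\ref{sec:paths}--\ref{sec:sprays}. For a \emph{noncritical} step $S_j\subset S_{j+1}$ (a Runge, homotopy-equivalent enlargement), the homology basis is unchanged; we first apply the parametric Mergelyan theorem to approximate $\varphi_t^{(j)}$ on $S_j$ by a family of holomorphic functions on a neighborhood of $S_{j+1}$, which perturbs the periods by a small amount, and then we correct them exactly using the period-dominating spray from Lemma~\ref{lem:existence-sprays}: composing with $\Xi(\zeta,\cdot)$ and applying the implicit function theorem to the period map $\zeta\mapsto\Pcal^{f_t,q}(\Xi(\zeta,\cdot)\varphi_t^{(j)})$, which has surjective $\zeta$-differential at $\zeta=0$ uniformly in the parameter, we find a continuous family of small $\zeta=\zeta_t$ restoring the exact periods; smallness of $\zeta_t$ keeps the result close to $\varphi_t^{(j)}$ and in $\c_*$. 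For a \emph{critical} step, we attach to $S_j$ a smooth arc $E$ (the core of the new handle, or an arc joining a new component) so that $S_j\cup E$ is admissible and Runge; we first extend the family $\varphi_t^{(j)}$ from $S_j$ across $E$ as a continuous family in $\Ascr(S_j\cup E)$ — the extension across the arc is where we get to \emph{choose} the behavior of $\varphi_t$ along $E$, and this is precisely where Lemma~\ref{lem:periods} enters: parametrizing $E$ by $[0,1]$ and writing the new period condition along the closed curve formed by $E$ together with a path in $S_j$ as an integral $\int_0^1 h(t,s)f_t(\gamma(s))\theta(\cdots)\,ds=\alpha(t)$, we solve it with a continuous family $h(t,\cdot)\in\c_*$ equal to $1$ at the endpoints (so the extension matches $\varphi_t^{(j)}$ at $bK$), using that $f_t$ is nowhere flat along $E$ (which follows from fullness of $f_t$ on $M$ by shrinking/choosing $E$ to be real-analytic or by a genericity argument). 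Once the family is defined with correct periods on the admissible set $S_j\cup E$, we are back in the noncritical situation and finish that substep as above.

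The main obstacle — and the reason the two lemmas of Sections~\ref{sec:paths}--\ref{sec:sprays} are separated — is keeping everything \emph{simultaneously} in $\c_*$ while the periods are corrected over a one-parameter family. Multiplying $\varphi_t$ by $\prod_i(1+\zeta_{t,i}g_i)$ only stays nonvanishing when the $\zeta_{t,i}$ are small, so the period corrections must be genuinely small perturbations; this forces the two-stage structure (first approximate, then correct a small error) rather than solving the period equation in one shot, and it is exactly why Lemma~\ref{lem:periods}'s proof invoked one-dimensionality of the parameter interval to push a family of values off $0$. Along critical steps there is the additional subtlety that the new curve $C$ representing the new generator of $H_1$ passes through the arc $E$, so the value of the new period is a functional of the free data $\varphi_t|_E$; Lemma~\ref{lem:periods} is tailored to solve this, but one must check that the nowhere-flatness hypothesis there is available, i.e. that $f_t$ restricted to $E$ spans $\c^n$, which we guarantee by choosing $E$ appropriately inside $M$ using fullness of each $f_t$ and compactness of $I$. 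Modulo these points, the bookkeeping (choice of exhaustion, rate of convergence, and verifying the limit is a continuous family of holomorphic maps into $\c_*$ with the right periods) is routine.
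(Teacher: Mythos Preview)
Your proposal is correct and follows essentially the same approach as the paper: an exhaustion of $M$ by Runge compacts (the paper uses sublevel sets of a Morse exhaustion), with the inductive step split into a noncritical case handled by parametric Mergelyan plus the period-dominating spray of Lemma~\ref{lem:existence-sprays} and the implicit function theorem, and a critical case handled by attaching an arc and invoking Lemma~\ref{lem:periods} to extend the multiplier across it with the prescribed new period before reducing to the noncritical case. One small slip: having $h(t,\cdot)=1$ at the endpoints of $E$ does not by itself make the extension match $\varphi_t^{(j)}$ on $bK$; you need the extension along $E$ to agree with the given boundary values of $\varphi_t^{(j)}$, which is arranged by a trivial modification of Lemma~\ref{lem:periods} (or by first multiplying by a fixed nonvanishing interpolant and absorbing it into the data)---the paper glosses over this in the same way.
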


The proof of Theorem \ref{th:main} consists of an inductive procedure; the following two lemmas provide the inductive step of the construction. 

%
%
\begin{lemma}[The noncritical case] \label{lem:noncritical}
Let $M$, $S=K\cup\Gamma\subset M$, $\theta$, $n$, and $f_t$ $(t\in I=[0,1])$ be as in Theorem \ref{th:main}.
Also let $L$ be a compact, smoothly bounded, Runge domain in $M$ such that $S\subset \mathring L$ and 
$S$ is a deformation retract of $L$.
Then, every continuous family of functions $\varphi_t\colon S\to\c_*$ $(t\in I)$ of class $\Ascr(S)$ may be  approximated
uniformly on $I\times S$ by continuous families of functions $\wt \varphi_t\colon L\to\c_*$ $(t\in I)$ of class $\Ascr(L)$ such that 
$(\wt \varphi_t-\varphi_t)f_t\theta$ is exact on $S$ for all $t\in I$. Furthermore, if $\varphi_0$ is of class $\Ascr(L)$ then the 
homotopy $\wt \varphi_t$ can be chosen with $\wt\varphi_0=\varphi_0$.
\end{lemma}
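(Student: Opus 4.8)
The plan is to reduce the statement to a Mergelyan-type approximation problem for families of non-vanishing holomorphic functions, and then repair the period discrepancy by a period-dominating spray argument using Lemma \ref{lem:existence-sprays}. First I would observe that since $S$ is a deformation retract of $L$ and $f_t\colon M\to\c^n$ is holomorphic on all of $M$, each $\varphi_t\in\Ascr(S)$ with values in $\c_*$ can be approximated, uniformly on $S$ and jointly in $t\in I$, by a continuous family of holomorphic functions $\psi_t\colon L\to\c$; this is the parametric Mergelyan theorem on the admissible set $S\subset\mathring L$ (here one uses that $I$ is contractible so that the logarithm $\log\varphi_t$ can be chosen continuously in $t$, approximated, and exponentiated, which also keeps the approximants zero-free and gives $\psi_t\colon L\to\c_*$). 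If $\varphi_0$ is already of class $\Ascr(L)$, simply take $\psi_0=\varphi_0$ and deform only for $t>0$.

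Next I would address the exactness requirement. Writing $\ell$ for the rank of $H_1(S;\Z)$ and fixing a Runge homology basis $\Bcal=\{C_1,\dots,C_\ell\}$ of $S$ with the arc property described before Lemma \ref{lem:existence-sprays}, the hypotheses on $f_t$ (full, hence nowhere flat on the curves $C_j$) let me invoke Lemma \ref{lem:existence-sprays} with $Q=I$ to produce $g_1,\dots,g_N\in\Oscr(M)$ for which the multiplier $\Xi(\zeta,p)=\prod_{i=1}^N(1+\zeta_i g_i(p))$ is period dominating for the family $f_t$: the map $\zeta\mapsto\Pcal^{f_t}(\Xi(\zeta,\cdot))\in(\c^n)^\ell$ has maximal rank $\ell n$ at $\zeta=0$ for every $t\in I$, and by Remark-type stability this persists (uniformly in $t$) under small perturbations of $f_t$. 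Then I set $\wt\varphi_t=\Xi(\zeta(t),\cdot)\,\psi_t$ and solve, by the implicit function theorem applied fiberwise over $I$, for a continuous family $\zeta(t)\in\c^N$, small, such that the periods $\int_{C_j}\wt\varphi_t f_t\theta$ equal $\int_{C_j}\varphi_t f_t\theta$ for all $j$ and all $t$; that is exactly the condition that $(\wt\varphi_t-\varphi_t)f_t\theta$ be exact on $S$. Since $\Xi(0,\cdot)\equiv1$ and $\psi_t$ is $\c_*$-valued, $\wt\varphi_t$ is non-vanishing on $L$ for $\zeta(t)$ small; and if $\varphi_0$ extends holomorphically with the correct $t=0$ periods, the right-hand side of the period equation at $t=0$ is already attained by $\zeta=0$, so uniqueness in the implicit function theorem forces $\zeta(0)=0$ and $\wt\varphi_0=\varphi_0$.

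The main obstacle I expect is the bookkeeping in the joint-parameter implicit function theorem step: one must ensure that the solution $\zeta(t)$ is genuinely continuous in $t$ and uniformly small, which requires the rank-$\ell n$ surjectivity in \eqref{eq:period2} to hold \emph{uniformly} over the compact parameter space $I$ — this is where compactness of $Q=I$ in Lemma \ref{lem:existence-sprays} is used — together with the observation that the target period values $\int_{C_j}\varphi_t f_t\theta$ depend continuously on $t$ and lie in the ranges where the submersion is a local surjection. A secondary point needing care is that the perturbation $\psi_t$ of $\varphi_t$ must be small enough that both (i) the period-dominating property survives with the \emph{same} functions $g_i$, and (ii) the change in the target periods caused by replacing $\varphi_t$ by $\psi_t$ can be absorbed by an arbitrarily small $\zeta(t)$; both are handled by choosing the Mergelyan approximation sufficiently close, after the $g_i$ have been fixed. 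Finally, one checks that $S$ being a deformation retract of $L$ makes ``exact on $S$'' equivalent to ``exact on $L$'', so the conclusion as stated (exactness on $S$) is obtained, and indeed the stronger exactness on $L$ holds, which is what the inductive step in Theorem \ref{th:main} will use.
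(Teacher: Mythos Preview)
Your overall strategy matches the paper's proof, but there is a concrete gap in the period-domination step. You apply Lemma~\ref{lem:existence-sprays} to the family $f_t$ and obtain that $\zeta\mapsto\Pcal^{f_t}(\Xi(\zeta,\cdot))$ has rank $\ell n$ at $\zeta=0$; this means the vectors $\bigl(\int_{C_j} g_i f_t\theta\bigr)_j$ span $(\c^n)^\ell$. However, the map to which you then apply the implicit function theorem is $\zeta\mapsto\bigl(\int_{C_j}\Xi(\zeta,\cdot)\psi_t f_t\theta\bigr)_j$, whose derivative at $\zeta=0$ consists of the vectors $\bigl(\int_{C_j} g_i\,\psi_t f_t\theta\bigr)_j$. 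These are \emph{not} small perturbations of the former: $\psi_t$ is close to $\varphi_t$, not to the constant $1$, so the Remark-type stability you invoke does not apply. In general, multiplying the integrand by a fixed nonvanishing function $\psi_t$ can destroy the spanning property for the specific $g_i$ you have chosen (e.g.\ a single integral $\int_{C_j} g_i f_t\theta\ne 0$ can become $\int_{C_j} g_i\psi_t f_t\theta=0$).

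The fix is exactly what the paper does: apply Lemma~\ref{lem:existence-sprays} to the family $\varphi_t f_t$ rather than to $f_t$. Since $\varphi_t$ is $\c_*$-valued and $f_t$ is full and holomorphic, $\varphi_t f_t$ satisfies the hypotheses of that lemma, and you obtain $g_1,\dots,g_N$ for which $\zeta\mapsto\Pcal^{\varphi_t f_t}(\Xi(\zeta,\cdot))$ has rank $\ell n$ at $\zeta=0$. Now the stability remark \emph{does} apply: replacing $\varphi_t$ by a sufficiently close $\psi_t$ preserves the rank, and the implicit function theorem goes through exactly as you describe. Your point (i) in the final paragraph then becomes correct, and the rest of your argument (including the treatment of the fixed endpoint $t=0$) is fine.
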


\begin{proof}
We may assume without loss of generality that $S$ is connected, hence so is $L$; otherwise we apply the same argument to 
each connected component. Let $l\in\z_+$ denote the rank of $H_1(S;\z)$. 

Let $f\colon I\times M\to\c^n$ and $\varphi\colon I\times S\to \c_*$ be the continuous maps defined by $f(t,\cdot)=f_t$ and 
$\varphi(t,\cdot)=\varphi_t$ for all $t\in I$. The assumptions on $f_t$ and $\varphi_t$ ensure that $\varphi_t f_t\in \Ascr(S)$ is full 
and nowhere flat on $\Gamma$ for all $t\in I$. Thus, Lemma \ref{lem:existence-sprays} furnishes finitely many holomorphic functions 
$g_1,\ldots, g_N\colon M\to\c$  such that the function $\Xi\colon \c^N\times M\to\c$ given by
\[
     \Xi(\zeta,p)=\prod_{i=1}^N \big(1+\zeta_i g_i(p)\big),\quad \zeta=(\zeta_1,\ldots,\zeta_N)\in\c^N,\; p\in M
\]
is a period dominating multiplier of $\varphi f$, in the sense that the period map
\begin{equation}\label{eq:period-vft}
    \c^N\ni\zeta\mapsto \Pcal^{\varphi f,t}(\Xi(\zeta,\cdot))\in (\c^n)^l
\end{equation}
has maximal rank equal to $ln$ at $\zeta=0$ for every $t\in I$. 
(Here, $\Pcal^{\varphi f,t}$ is the period map associated to a basis $\Bcal$ of $H_1(S;\z)$, 
the map $\varphi_t f_t$, and the $1$-form $\theta$; see \eqref{eq:periodmap} and \eqref{eq:periodmapj}.) 
In particular, since 
\begin{equation}\label{eq:Xivf=1}
     \Xi(0,\cdot)\equiv 1,
\end{equation} 
the implicit function theorem guarantees that the range of the period map \eqref{eq:period-vft} restricted 
to any ball $W$ around the origin in $\c^N$ contains an open neighborhood of 
\begin{equation}\label{eq:Xivf=12}
      \Pcal^{\varphi f,t}(\Xi(0,t))=\Pcal^{\varphi f,t}(1)\in(\c^n)^l,\quad  t\in I.
\end{equation}
Let $W$ be a small such ball satisfying $\Xi(\zeta,p)\neq 0$ for all $\zeta\in W$ and $p\in L$; such exists by 
\eqref{eq:Xivf=1} and compactness of $L$.

By the parametric version of Mergelyan's theorem we can approximate $\varphi$ uniformly on $I\times S$ by functions 
$\phi \colon I\times L\to\c_*$ of class $\Ascr(I\times L)$; recall that $S$ is a deformation retract of $L$
and $\c_*$ is an Oka manifold. Set $\phi_t=\phi(t,\cdot)$ for all $t\in I$. 
Furthermore, if $\varphi_0$ is of class $\Ascr(L)$ then the homotopy $\phi_t$ $(t\in I)$ can be chosen with 
$\phi_0=\varphi_0$. Thus, if the approximation of $\varphi$ by $\phi$ is close enough,  
the implicit function theorem furnishes a continuous path $\beta\colon I\to W$ such that 
\[
     \Pcal^{\phi f,t}(\Xi(\beta(t),\cdot))= \Pcal^{\varphi f,t}(1)\quad \text{for every $t\in I$}.
\]
If furthermore $\varphi_0$ is of class $\Ascr(L)$ and $\phi_0=\varphi_0$, then the path $\beta$ can be chosen such that
$\beta(0)=0\in W\subset\c^N$. It follows that the homotopy 
\[
	\wt\varphi_t:=\Xi(\beta(t),\cdot) \phi_t : L\to\c_*, \quad t\in I,
\]
satisfies the conclusion of the lemma provided that $W$ is chosen small enough and the approximation of $\varphi$ by $\phi$ is 
made sufficiently close.
\end{proof}

\begin{lemma}[The critical case] \label{lem:critical}
Let $M$, $\theta$, $n$, and $f_t$ $(t\in I=[0,1])$ be as in Theorem \ref{th:main}.
Let $\rho\colon M\to\r_+=[0,+\infty)$ be a smooth strongly subharmonic Morse exhaustion function and pick numbers $0<a<b\in\r$
which are not critical values of $\rho$ and such that $\rho$ has exactly one critical point $p$ in $L\setminus \mathring K$, 
where $K=\{\rho\leq a\}$ and $L=\{\rho\leq b\}$. 
Also let $\varphi_t\colon K\to\c_*$ and $\qgot_t\colon H_1(L;\z)\to \c^n$  $(t\in I)$ be continuous families of functions and group 
homomorphisms such that, for each $t\in I$, $\varphi_t$ is of class $\Ascr(K)$ and $\qgot_t(\gamma)=\int_\gamma \varphi_tf_t\theta$ 
holds for all closed curves $\gamma\subset K$. Then the family $\varphi_t$ may be  approximated uniformly on $I\times K$ by 
continuous families of functions $\wt \varphi_t\colon L\to\c_*$ $(t\in I)$ of class $\Ascr(L)$ such that 
\[
	\int_{\gamma} \wt \varphi_tf_t\theta=\qgot_t(\gamma) \ \ 
	\text{for every closed curve $\gamma\subset L$ and $t\in I$}.
\] 
Furthermore, if $\varphi_0$ is of class $\Ascr(L)$ and $\qgot_0(\gamma)=\int_\gamma \varphi_0f_0\theta$ for every closed curve
$\gamma\subset L$, then the homotopy $\wt \varphi_t\colon L\to\c_*$ $(t\in I)$ can be chosen such that $\wt\varphi_0=\varphi_0$.
\end{lemma}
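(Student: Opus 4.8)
The plan is to reduce the critical case to the noncritical case by a standard two-step argument: first cross the critical point $p$ of $\rho$ using an attaching handle, then fill in the rest of $L$ where $\rho$ has no critical points. Concretely, since $p$ is a Morse critical point of the strongly subharmonic $\rho$ lying in $L\setminus\mathring K$, its Morse index is $0$ or $1$. If the index is $0$, then $L$ has a new connected component not met by $K$, or $K$ is enlarged by a disc; in either event $S:=K$ (or $K$ together with that component) is a deformation retract of a slightly larger admissible set and no new homology is created, so Lemma \ref{lem:noncritical} applies directly after absorbing the new disc. The substantive situation is index $1$: attaching a $1$-handle to $K$ along an arc $\Gamma$ yields an admissible set $S=K\cup\Gamma$ which is a deformation retract of a compact smoothly bounded Runge domain $L'$ with $\mathring K\subset\mathring L'\subset L'\subset\mathring L$, and $H_1(L';\z)\cong H_1(L;\z)$ has exactly one more generator than $H_1(K;\z)$; call it $\delta$, realized by a loop $C$ running through the core of the handle.

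The key steps, in order, would be: (i) Choose the attaching arc $\Gamma\subset M$ so that $S=K\cup\Gamma$ is admissible, the curve $C$ completing a homology basis of $S$ runs through an arc of $\Gamma$ disjoint from all other basis curves, and $f_t(q,\cdot)$ restricted to that arc is nowhere flat for all $t\in I$ (possible since $f_t$ is full hence nowhere flat, by perturbing $\Gamma$ slightly). (ii) Extend $\varphi_t$ from $K$ across $\Gamma$: using Mergelyan on the admissible set $S$ one approximates $\varphi_t$ by functions in $\Ascr(S)$, but we must also correct the single new period $\int_C \wt\varphi_t f_t\theta$ to equal $\qgot_t(\delta)$. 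To do this I would apply Lemma \ref{lem:periods} along a parametrization of the free arc of $C\cap\Gamma$: treating the restriction of $f_t$ to that arc as the nowhere-flat family and the required period correction as the target $\alpha(t)$, Lemma \ref{lem:periods} furnishes a multiplier $h_t$, equal to $1$ near the endpoints (hence extendable by $1$ across the rest of $S$), with $\int \wt\varphi_t f_t\theta=\qgot_t(\delta)$ along $C$; the periods over the old basis curves in $K$ are unchanged because the correction is supported on the interior of the arc and $\varphi_t$ already gives the right periods there. (iii) Having produced a family $\wt\varphi_t\in\Ascr(S)$ with all periods (over a basis of $H_1(S;\z)\cong H_1(L;\z)$) equal to $\qgot_t$, invoke Lemma \ref{lem:noncritical} with this $S$ and the noncritical domain $L'$ (and then $L$, since $L$ deformation retracts onto $L'$, or directly crossing the remaining non-critical level set) to approximate $\wt\varphi_t$ by a family on $L$ of class $\Ascr(L)$ whose correction $1$-form is exact on $S$, i.e.\ has the same periods. (iv) The parenthetical refinement: if $\varphi_0$ already extends to $L$ with the correct periods, arrange $h_0=1$ in the application of Lemma \ref{lem:periods} (its last assertion) and $\beta(0)=0$ in Lemma \ref{lem:noncritical}, so that $\wt\varphi_0=\varphi_0$ throughout.

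The main obstacle I anticipate is the bookkeeping in step (ii): one must choose the extension of $\varphi_t$ across the handle so that the Mergelyan approximation on $S$ does not disturb the periods already present on the subcurves lying in $K$, while simultaneously hitting the one new period exactly and continuously in $t$. The cleanest way is to first extend $\varphi_t$ to $S$ in any continuous-in-$t$, nowhere-vanishing way of class $\Ascr(S)$ (Mergelyan plus Oka-ness of $\c_*$), note that this changes only the new period $\int_C$, and then multiply by the Lemma \ref{lem:periods} multiplier $h_t$ supported on the interior of the free arc $\wt C\subset C\cap\Gamma$ — a region disjoint from $K$ — so the old periods are untouched by construction. One also must check that $h_t$ can be taken close to $1$ uniformly (so $\wt\varphi_t$ stays in $\c_*$ and close to $\varphi_t$ on $K$), which is exactly the approximate form \eqref{eq:approximate} built into the proof of Lemma \ref{lem:periods}. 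Once this is arranged, step (iii) is a black-box application of Lemma \ref{lem:noncritical}, and step (iv) follows from the ``furthermore'' clauses already proved.
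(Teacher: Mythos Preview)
Your proposal is correct and follows essentially the same approach as the paper: split by Morse index, and in the index-$1$ case attach an arc $\Gamma$ to obtain an admissible set $S=K\cup\Gamma$ that deformation retracts onto $L$, extend $\varphi_t$ across $\Gamma$ with the new period fixed via Lemma~\ref{lem:periods}, then apply Lemma~\ref{lem:noncritical}. A few minor simplifications are available: the intermediate domain $L'$ is unnecessary since $S$ is already a strong deformation retract of $L$ itself; extending $\varphi_t$ from $K$ to $S=K\cup\Gamma$ as a map in $\Ascr(S,\c_*)$ requires only a continuous $\c_*$-valued extension along the arc (no Mergelyan), and since the Lemma~\ref{lem:periods} multiplier equals $1$ at the endpoints of $\Gamma$ and hence is identically $1$ on $K$, the old periods are preserved exactly rather than approximately; finally, you should also record the index-$1$ subcase where the endpoints of $\Gamma$ lie in different components of $K$, in which the inclusion $K\hookrightarrow S$ induces an isomorphism on $H_1$ and no period needs to be corrected.
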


\begin{proof}
By our assumptions, $p\in\mathring L\setminus K$ and the Morse index of $\rho$ at $p$ is either $0$ or $1$.

\smallskip

\noindent {\em Case 1:} The Morse index of $\rho$ at $p$ equals $0$. In this case a new connected 
component of the sublevel set $\{\rho\leq s\}$ appears when $s$ passes the value $\rho(p)$, and this gives a new 
connected and simply connected component $D$ of $L$. 
Since $K$ is a strong deformation retract of $L\setminus D$, Lemma \ref{lem:noncritical} provides a 
continuous family of functions $\wt \varphi_t\colon L\setminus D\to\c_*$ $(t\in I)$ of class $\Ascr(L\setminus D)$ which 
approximates the family $\varphi_t$ as closely as desired uniformly on $I\times K$
and such that $(\wt \varphi_t - \varphi_t)f_t\theta$ is exact on $K$ for every $t\in I$. Furthermore,
if $\varphi_0$ is of class $\Ascr(L)$ then the homotopy $\wt \varphi_t\colon L\setminus D\to\c_*$ can be chosen with 
$\wt\varphi_0=\varphi_0$.  Finally, define $\wt \varphi_t$ $(t\in I)$ on $D$ as any continuous family of maps of class 
$\Ascr(D,\c_*)$; if $\varphi_0$ is of class $\Ascr(L)$, we can choose $\wt \varphi_t=\varphi_0$ on $D$ for all $t\in I$. 
This concludes the proof in Case 1.

\smallskip

\noindent {\em Case 2:} The Morse index of $\rho$ at $p$ equals $1$. In this case there exists a smooth Jordan arc 
$\Gamma\subset \mathring L\setminus\mathring K$ with endpoints in $bK$ and otherwise disjoint from $K$ such that $S:=K\cup \Gamma$ 
is an admissible subset of $M$ (see Definition \ref{def:admissible}) and a strong deformation retract of $L$. If the endpoints of $\Gamma$ 
lie in different components of $K$ then the inclusion $K\hra S$ induces an isomorphism of the homology groups; otherwise 
there appears a new closed curve $\Gamma_0\subset S$, containing $\Gamma$, which is not in the homology of $K$. 
In each of these case we can use Lemma \ref{lem:periods} to construct a homotopy of continuous maps 
$h_t\colon \Gamma\to\c_*$ $(t\in I)$ such that the family of functions $\psi_t\colon S=K\cup\Gamma\to\c_*$, given by 
$\psi_t=\varphi_t$ on $K$ and $\psi_t=h_t$ on $\Gamma$, is continuous in $t\in I$ and, for each $t\in I$, $\psi_t$ is of class 
$\Ascr(S)$ and satisfies $\int_{\Gamma_0} \psi_t f_t\theta=\qgot_t(\Gamma_0)$ if $\Gamma$ lies in a closed curve $\Gamma_0$.
Furthermore, if $\varphi_0$ is of class $\Ascr(L)$ and $\qgot_0(\gamma)=\int_\gamma \varphi_0f_0\theta$ holds 
for every closed curve $\gamma\subset L$, then the homotopy $h_t\colon \Gamma\to\c_*$ $(t\in I)$ can be chosen with 
$h_0=\varphi_0|_\Gamma$. Lemma \ref{lem:noncritical}, applied to the homotopy $\psi_t\colon S\to\c_*$ $(t\in I)$, completes the task.
\end{proof}

%
%
\begin{proof}[Proof of Theorem \ref{th:main}]
Since the compact set $S$ is assumed to be Runge in $M$, there exist a smooth strongly subharmonic Morse exhaustion function 
$\rho\colon M\to\r$ and a number $a_1\in\r$ such that $a_1$ is a regular value of $\rho$, 
$S\subset\{\rho<a_1\}$, and $S$ is a strong deformation retract of 
$M_1:=\{\rho\le a_1\}$. Let $p_1,p_2,\ldots$ be the (isolated) critical points of $\rho$ in $M\setminus M_1$ and assume without loss of 
generality that $a_1<\rho(p_1)<\rho(p_2)<\cdots$. Choose a strictly increasing divergent sequence of real numbers $\{a_j\}_{j\ge 2}$ such that 
$\rho(p_{j-1})<a_j<\rho(p_j)$ for all $j=2,3,\ldots$ (in particular, $a_2>a_1$); if $\rho$ has only finitely many critical points in $M\setminus M_1$
then we choose the remainder terms of the sequence $\{a_j\}_{j\ge 2}$ arbitrarily. Set $M_0:=S$ and $M_j:=\{\rho\le a_j\}$ for all $j\ge 2$. 
Thus, each $M_j$ for $j\in\n$ is a smoothly bounded compact Runge domain in $M$ and we have that
\[
	S=M_0\Subset M_1\Subset M_2\Subset \cdots \Subset \bigcup_{j\in\z_+} M_j=M.
\]

Let $\varphi_t^0:= \varphi_t \colon S\to\c_*$ $(t\in I)$ be a continuous family of functions of class $\Ascr(S)$. Pick a number $\epsilon>0$. 
An inductive application of Lemmas \ref{lem:noncritical} and \ref{lem:critical} provides a sequence of continuous families 
$\varphi_t^j\colon M_j\to\c_*$ $(t\in I)$ of functions of class $\Ascr(M_j)$, $j\in\n$, such that the following conditions hold 
for each $t\in I$ and $j\in\n$:
\begin{enumerate}[\rm (a)]
\item $\varphi_t^j$ approximates $\varphi_t^{j-1}$ as closely as desired uniformly on $I\times M_{j-1}$,
\vspace{1mm}
\item $\int_{\gamma} \varphi_t^j f_t\theta=\qgot_t(\gamma)$ holds for every closed curve $\gamma\subset M_j$, and
\vspace{1mm}
\item  if $\varphi_0^0$ extends to a holomorphic function $M\to\c_*$ such that $\int_\gamma \varphi_0^0f_0\theta=\qgot_0(\gamma)$ 
holds for all closed curves $\gamma\subset M$, then the homotopy $\varphi_t^j$ can be chosen with $\varphi_0^j=\varphi_0^0$.
\end{enumerate}
If the approximation in {\rm (a)} is close enough for every $j\in\N$, we obtain a limit continuous family of holomorphic functions 
\[
	\wt \varphi_t:=\lim_{j\to\infty} \varphi_t^j : M\to\c \quad (t\in I)
\]
such that, for each $t\in I$, $\wt \varphi_t$ does not 
vanish anywhere on $M$, $\wt\varphi_t$ is uniformly $\epsilon$-close to $\varphi_t=\varphi_t^0$ on $S=M_0$, and 
$\int_{\gamma} \wt \varphi_tf_t\theta=\qgot_t(\gamma)$ holds for every closed curve $\gamma\subset M$. Furthermore, 
if $\varphi_0=\varphi_0^0$ extends to a holomorphic function $M\to\c_*$ such that $\int_\gamma \varphi_0f_0\theta=\qgot_0(\gamma)$ 
for all closed curves $\gamma\subset M$, then the homotopy $\wt\varphi_t$ $(t\in I)$ can be chosen 
such that $\wt\varphi_0=\varphi_0$. This concludes the proof of Theorem \ref{th:main}. 
\end{proof}

%
%
\begin{proof}[Proof of Theorem \ref{th:main1}]
Let $M$, $n$, $\Phi_t$, and $\qgot_t$ be as in Theorem \ref{th:main1}. Choose a nowhere vanishing holomorphic $1$-form $\theta$ on 
$M$ and set $f_t = \Phi_t/\theta \colon M\to\c^n$ for each $t\in I$. Let $S\subset M$ be a compact, smoothly bounded, simply connected 
domain, and consider the constant map $\varphi_t \equiv 1\in\c_*$ on $S$, $t\in I$. Theorem \ref{th:main} applied to 
these data provides a homotopy of holomorphic functions $h_t:=\wt\varphi_t\colon M\to\c_*$ $(t\in [0,1])$ satisfying the conclusion 
of Theorem \ref{th:main1}.
\end{proof}


\section{Applications of Theorem \ref{th:main1} to minimal surfaces} \label{sec:finalsec}

In this section we show how Theorem \ref{th:main1} can be used 
to prove Theorems \ref{th:Gauss-n}, \ref{th:avoiding} and Corollary \ref{cor:Crelle-sameGaussmap}.
The other results stated in the Introduction follow from these as has already been indicated. 

Recall that $\pi\colon \C^n_*=\c^n\setminus\{0\} \to\cp^{n-1}$ denotes the canonical projection onto the projective
space, so $\pi(z_1,\ldots,z_n)=[z_1\colon \cdots\colon z_n]$ are homogeneous coordinates on $\cp^{n-1}$. 
We shall need the following lemma concerning the lifting of holomorphic maps with respect to this projection.

%
%

\begin{lemma}\label{lem:lifting}
Let $M$ be an open Riemann surface and $Q\subset P$ be compact Hausdorff spaces with
$Q$ a strong deformation retract of $P$. (If $Q$ is empty, we assume that $P$ is contractible.)
Given a continuous map $g\colon M \times P \to \cp^{n-1}$
such that $g(\cdotp,p)\colon M \to \cp^{n-1}$ is holomorphic for every $p\in P$ and a 
continuous map $f\colon M\times Q\to \c^n_*$ such that $\pi\circ f=g|_{M\times Q}$ and 
$f(\cdotp,p)\colon M\to \c^n_*$ is holomorphic for every $p \in Q$, there exists 
a continuous map $\wt f\colon M\times P\to\c^n_*$ satisfying the following conditions:
\begin{itemize}
\item[\rm (a)] $\pi\circ \wt f =g$,
\vspace{1mm}
\item[\rm (b)] $\wt f=f$ on $M\times Q$, and 
\vspace{1mm}
\item[\rm (c)] $\wt f(\cdotp,p)\colon M\to\c^n_*$ is holomorphic for every $p\in P$.
\end{itemize}
\end{lemma}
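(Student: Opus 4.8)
The plan is to construct $\wt f$ as a continuous family of holomorphic lifts by combining the existence of a single holomorphic lift over a contractible (or $Q$-retracting) base with the homotopy lifting property of the principal $\C_*$-bundle $\pi\colon \c^n_*\to\cp^{n-1}$. First I would recall the elementary fact that every holomorphic map $g_0\colon M\to\cp^{n-1}$ from an open Riemann surface admits a holomorphic lift $f_0\colon M\to\c^n_*$ with $\pi\circ f_0=g_0$: since $M$ is homotopy equivalent to a wedge of circles it is Stein of dimension one, the pullback line bundle $g_0^*\Ocal(-1)$ (or rather the associated $\C^*$-bundle, i.e.\ $g_0^*\Ocal(1)$ minus its zero section) is topologically trivial on $M$ because $H^2(M;\Z)=0$, and hence holomorphically trivial by the Oka--Grauert principle; a nowhere vanishing holomorphic section of it is exactly such a lift $f_0$. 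This argument is parametric: it yields a continuous-in-$p$ family of lifts as long as the parameter space is contractible, so over any contractible $P$ we already get (a) and (c), and the case $Q=\emptyset$ is done.

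Next I would treat the general case $Q\subset P$ with $Q$ a strong deformation retract of $P$. Fix a strong deformation retraction $r_s\colon P\to P$ $(s\in[0,1])$ with $r_0=\mathrm{id}_P$, $r_1(P)\subset Q$, and $r_s|_Q=\mathrm{id}_Q$. Consider the composed family $g\circ(\mathrm{id}_M\times r_s)\colon M\times P\to\cp^{n-1}$, which at $s=1$ factors through $M\times Q$ where we already have the lift $f$. The key step is then to invoke the homotopy lifting property for the fibration $\pi\colon\c^n_*\to\cp^{n-1}$ \emph{in the holomorphic category}: starting from the holomorphic-in-$p$ lift $f(\cdotp,r_1(p))$ of $g(\cdotp,r_1(p))$ over $M\times P$ at time $s=1$, lift the homotopy $s\mapsto g(\cdotp,r_{1-s}(p))$ back to $s=0$ to obtain $\wt f(\cdotp,p)$ over $M\times P$ with $\pi\circ\wt f=g$. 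The lifting can be performed holomorphically in the $M$-variable and continuously in $(p,s)$ because $\pi$ is a holomorphic principal $\C_*$-bundle and $\C_*$ is an Oka group: lifting a homotopy amounts to finding a continuous family of nowhere vanishing holomorphic functions on $M$ (sections of a trivial bundle) extending the given ones over the sub-parameter space, which is an instance of the parametric Oka property with approximation/interpolation for maps from the Stein manifold $M$ to $\C_*$. Since $r_s$ is fixed on $Q$, the resulting $\wt f$ restricts to $f$ on $M\times Q$, giving (b).

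Concretely, one can also argue more hands-on, avoiding abstract fibration language: over $P$, choose (by the contractible case applied on a neighbourhood, or directly) any continuous family of holomorphic lifts $F(\cdotp,p)\colon M\to\c^n_*$ of $g(\cdotp,p)$; such exists because, $P$ being compact, the $\C^*$-bundle $g^*\Ocal(1)\setminus 0\to M\times P$ is topologically trivial and hence, by the parametric Oka--Grauert principle over the Stein manifold $M$ with continuous dependence on the parameter $p\in P$, holomorphically trivial in the $M$-direction. Then on $M\times Q$ the two lifts $f$ and $F$ differ by a continuous family of nowhere vanishing holomorphic functions $u\colon M\times Q\to\c_*$, i.e.\ $f=u\cdot F$ on $M\times Q$. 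Because $\C_*$ is an Oka manifold and $Q$ is a strong deformation retract of $P$, the parametric Oka principle with interpolation lets us extend $u$ to a continuous family $\wt u\colon M\times P\to\c_*$ of nowhere vanishing holomorphic functions with $\wt u=u$ on $M\times Q$; set $\wt f=\wt u\cdot F$. This clearly satisfies (a) and (c), and (b) holds by construction.

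The main obstacle is the last extension step: one needs that a continuous family over $Q$ of nowhere vanishing holomorphic functions on $M$ extends to such a family over $P$ when $Q$ is a strong deformation retract of $P$. This is precisely the parametric Oka property with interpolation for the target $\C_*$ and the Stein source $M$, and it is the point where Oka theory (rather than soft topology) is essential; everything else is either the one-parameter Oka--Grauert triviality of the tautological $\C^*$-bundle pulled back to $M\times P$ or formal bookkeeping about the retraction. I would cite the parametric Oka principle (e.g.\ \cite{ForstnericLarusson2011NY,Forstneric2017E}) for this and keep the exposition short.
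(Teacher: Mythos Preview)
Your proposal is correct. Your first approach (lifting the homotopy induced by the retraction $r_s$) is essentially the paper's argument phrased in fibration language: the paper cites the parametric Oka principle for liftings \cite[Theorem~4.2]{Forstneric2010TM} to reduce the problem to finding a merely \emph{continuous} lift $\wt f$ satisfying (a) and (b), and then obtains that continuous lift from the topological triviality of the pullback line bundle over $M$ together with the deformation retract hypothesis. Your second, hands-on approach is a genuinely different decomposition: you first produce a holomorphic-in-$M$ family $F$ over all of $P$ satisfying (a) and (c) but not (b), and then correct by a multiplier $\wt u\colon M\times P\to\c_*$ extending $u=f/F$ from $M\times Q$, thereby reducing everything to the parametric Oka property with interpolation for the single target $\c_*$. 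This buys you a more concrete endpoint (nowhere vanishing holomorphic functions, essentially Cousin~II with parameters) by exploiting the principal $\c_*$-bundle structure of $\pi$, whereas the paper's route is shorter but invokes a more general black box. One small slip: the topological triviality of $g^*\Ocal(1)$ over $M\times P$ is not a consequence of compactness of $P$; it follows because $M\times Q\hookrightarrow M\times P$ is a homotopy equivalence and the bundle is already trivialized over $M\times Q$ by $f$ (respectively, because $P$ is contractible when $Q=\emptyset$).
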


\begin{proof}
Note that the map $\pi\colon \c^n_*\to\cp^{n-1}$ is a holomorphic fiber bundle with fiber $\c_*$
which is an Oka manifold. For such bundles, the parametric Oka principle for liftings holds
for maps from any reduced Stein space, in particular, for maps from an open Riemann surface
(see \cite[Theorem 4.2]{Forstneric2010TM}). In our situation this means the following:

Given a continuous map $\wt f\colon M\times P\to\c^n_*$ satisfying conditions (a) and (b) above,
there is a homotopy $\wt f_t\colon  M\times P\to\c^n_*$ $(t\in[0,1])$ such that $\wt f_0=\wt f$,
every map $\wt f_t$ in the family enjoys conditions (a) and (b), and the final map $\wt f_1$ also 
satisfies condition (c).

This reduces the proof to the existence of a continuous map $\wt f\colon M\times P\to\c^n_*$ satisfying 
conditions (a) and (b) (but not necessarily condition (c)).  
Let $\pi\colon E\to \cp^{n-1}$ denote the holomorphic line obtained 
by adding the zero section $E_0\cong \cp^{n-1}$ to the $\c_*$-bundle  $\pi\colon \c^n_*\to\cp^{n-1}$.
Since $M$ is homotopy equivalent to a wedge of circles, the pullback $g^*E\to M$
by any map $g\colon M\to \cp^{n-1}$ is a trivial complex line bundle over $M$, and hence it admits
a nowhere vanishing section. Clearly, such a section corresponds to a lifting $\wt f\colon M\to \c^{n}_*$ 
of the map $g$. Furthermore, if $P$ is a contractible compact Hausdorff space then
by the same argument a map $g\colon M\times P\to \cp^{n-1}$ lifts to a map
$\wt f\colon M\times P \to \c^{n}_*$. Similarly, if $Q\subset P$ is a nonempty subspace such that 
$P$ deformation retracts onto $Q$ and we already have a lifting $\wt f$ of $g$ over $M\times Q$,
then $\wt f$ extends to a lifting $\wt f\colon M\times P\to \c^n_*$ extending $g$. 
This completes the proof. 
\end{proof}

We may assume in the sequel that the Riemann surface $M$ is connected; otherwise we can apply
the same proofs separately to each connected component.

%
%
\begin{proof}[Proof of Theorem \ref{th:Gauss-n}]
Assume first that the map $\Gscr\colon M\to Q_{n-2} \subset \CP^{n-1}$ is full. 
Let $\pgot \colon H_1(M;\Z)\to \R^n$ be any  group homomorphism. 
By Lemma \ref{lem:lifting} there is a holomorphic lifting $f\colon M\to\Agot_*$ of $\Gscr$, 
where $\Agot_*=\Agot\setminus\{0\}\subset\c^n$ is the punctured null quadric \eqref{eq:nullquadric}.
Pick a holomorphic $1$-form $\theta$ without zeros on $M$; such exists by the Oka-Grauert principle. 
Let $\qgot\colon H_1(M;\Z)\to \C^n$ be the group homomorphism given by 
\[
	\qgot(\gamma) = \int_\gamma f\theta \quad \text{for every closed curve $\gamma\in H_1(M;\Z)$}.
\]
Choose a homotopy of group homomorphisms $\qgot_t\colon  H_1(M;\Z)\to \C^n$ $(t\in [0,1])$ such that
$\qgot_0=\qgot$ and $\qgot_1=\igot \, \pgot$. If $\qgot=\qgot^1+\imath \qgot^2$
with $\qgot^1,\qgot^2\colon  H_1(M;\Z)\to \R^n$, we can take 
\[
	\qgot_t = (1-t)\qgot^1 +\imath \left((1-t)\qgot^2 + t\, \pgot\right),\quad t\in  [0,1].
\]
Theorem \ref{th:main1}, applied to the $1$-form $\Phi=f\theta$ 
and the homotopy of group homomorphisms $\qgot_t$, furnishes a nowhere vanishing holomorphic function 
$h\colon M\to \C_*$ such that the $1$-form $h f\theta$ has periods equal to $\igot\,\pgot$.
In particular, its real part is exact and hence it integrates to a conformal minimal immersion
$X\colon M\to\R^n$ with $\Flux_X=\pgot$ by setting
\[
	X(p)= 2\int_{p_0}^p \Re\left( hf\theta \right),\quad p\in M
\]
for any initial point $p_0\in M$. The Gauss map of $X$ equals $[\di X]=[hf\theta]=[f]=\Gscr$. 
If $\pgot=0$ then $X$ is the real part of a holomorphic null curve  $X+\imath Y\colon M\to \C^n$. 

In order to prove that $X$ can be chosen an embedding
if $n\ge 5$ and an immersion with simple double points if $n=4$, we proceed as in 
\cite[proof of Theorem 4.1]{AlarconForstnericLopez2016MZ} (see also \cite[Section 6]{AlarconForstneric2014IM}), 
with the only difference that we use Lemma \ref{lem:existence-sprays} from the present paper in order to make 
a generic perturbation of the integral $\int_\gamma \Re(hf\theta)$ along an arc $\gamma\subset M$ connecting 
a given pair of points $p,q\in M$. We leave out the obvious details.

If the map $\Gscr$ is not full, we can apply the same proof with $\C^n$ replaced by the $\C$-linear span 
$\Lambda= \span(f(M)) \subset \C^n$ of the image of the lifted map $f \colon M\to \Agot_*$. 
Note that $f$ is full in $\Lambda$, so the same proof applies and gives a conformal minimal immersion 
$X\colon M\to \R^n$ with the generalized Gauss map $\Gscr$ and with $\Flux_X$ being any homomorphism 
$\pgot\colon H_1(M;\Z)\to\R^n$ such that $\qgot=\imath\, \pgot$ has range in $\Lambda$. 
If $\Lambda$ is a complex line, the result also follows from the Gunning-Narasimhan theorem
\cite{GunningNarasimhan1967MA}.  The general position theorem still applies and shows that 
$X$ can be chosen an embedding  if $\dim \Lambda\ge 5$ and an immersion with simple double points 
if $\dim \Lambda=4$. 
\end{proof}

In the proof of Theorem \ref{th:avoiding} we shall need the following lemma.

%
%

\begin{lemma}\label{lem:2points}
Let $M$ be an open Riemann surface. For any holomorphic map $g \colon M\to \cp^1$ there is a homotopy 
of holomorphic maps $g_t \colon M\to \cp^1$  $(t\in [0,1])$ such that $g_0=g$, $g_t$ is nonconstant for every 
$t\in (0,1]$, and $g_1(M)$ omits any two given points of the Riemann sphere. 
\end{lemma}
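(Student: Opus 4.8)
The plan is to reduce, by a M\"obius automorphism $\phi$ of $\CP^1$, to the case in which the two points to be avoided are $0$ and $\infty$, so that the required final map has image in $\C_*=\CP^1\setminus\{0,\infty\}$: if $\phi$ carries the two given points to $0$ and $\infty$ and $\wt g_t\colon M\to\CP^1$ is a homotopy of holomorphic maps with $\wt g_0=\phi\circ g$, with $\wt g_t$ nonconstant for every $t\in(0,1]$, and with $\wt g_1(M)\subset\C_*$, then the family $g_t:=\phi^{-1}\circ \wt g_t$ satisfies the conclusion. I would then fix a model target: by the Gunning--Narasimhan theorem \cite{GunningNarasimhan1967MA} the open Riemann surface $M$ carries a holomorphic function $\psi$ with nowhere vanishing differential, so $g^\star:=e^{\psi}\colon M\to\C_*\subset\CP^1$ is a nonconstant holomorphic map whose image omits $0$ and $\infty$.

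The first substantive step is to join $g$ to $g^\star$ by a homotopy of \emph{holomorphic} maps. Since $M$ is homotopy equivalent to a bouquet of circles, hence to a CW complex of dimension at most one, while $\CP^1\cong\S^2$ is simply connected, elementary obstruction theory shows that every continuous map $M\to\CP^1$ is null-homotopic; in particular $g$ and $g^\star$ are homotopic as continuous maps. As $M$ is Stein and $\CP^1$ is an Oka manifold, the Oka principle---the inclusion $\Ocal(M,\CP^1)\hookrightarrow\Cscr(M,\CP^1)$ is a weak homotopy equivalence, see \cite{ForstnericLarusson2011NY,Forstneric2017E}---then upgrades this to a homotopy of holomorphic maps $h_t\colon M\to\CP^1$ $(t\in[0,1])$ with $h_0=g$ and $h_1=g^\star$.

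The remaining and, in my view, main difficulty is to modify the family $\{h_t\}$, keeping the endpoints $h_0=g$ and $h_1=g^\star$ fixed, so that the intermediate maps are nonconstant for every $t\in(0,1]$. A holomorphic map from the connected surface $M$ to $\CP^1$ is nonconstant precisely when its differential is not identically zero, and it is enough to keep this differential nonzero at one fixed point $x_0\in M$. I would therefore perturb $\{h_t\}$ within a finite-dimensional family of holomorphic perturbations---assembled, near each value of the parameter, from a function $\tau\in\Ocal(M)$ with $d\tau(x_0)\neq 0$ (again \cite{GunningNarasimhan1967MA}) together with a small motion in a holomorphic chart of $\CP^1$ about $h_t(x_0)$, and patched in $t$ using compactness of $[0,1]$---rich enough to prescribe the $1$-jet at $x_0$ of the perturbed map. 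The vanishing of the derivative part of this $1$-jet is then, for each $t$, a single complex (two real) equation, so a generic choice of the perturbation, taken trivial near the endpoints $t=0,1$, yields a homotopy $\wt g_t$ of holomorphic maps with $\wt g_0=g$, $\wt g_1=g^\star$, and $d\wt g_t(x_0)\neq 0$ for all $t\in(0,1]$; the value $t=0$ need not---and, if $g$ is constant, cannot---be controlled. Since the endpoints are fixed, $\wt g_1=g^\star$ still omits $0$ and $\infty$, and $g_t=\phi^{-1}\circ\wt g_t$ finishes the construction. The point requiring care is that the set of nonconstant holomorphic maps $M\to\CP^1$ is \emph{not} open---a locally uniform limit of nonconstant maps can be constant---so this last step must be set up as avoiding the thin closed set of maps whose $1$-jet at $x_0$ has vanishing derivative part, rather than as staying inside an open set.
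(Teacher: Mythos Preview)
Your reduction via a M\"obius transformation and the use of the Oka principle to produce a holomorphic homotopy $h_t$ joining $g$ to a fixed nonconstant target $g^\star\in\Ocal(M,\C_*)$ are correct, and this part is parallel in spirit to the paper's argument (which uses a retraction of $M$ onto a $1$-complex rather than your null-homotopy observation, but to the same effect).

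The gap is in your nonconstancy step. First, the perturbation family you describe is not well defined: for a holomorphic map $h_t\colon M\to\CP^1$ whose image is not contained in a single affine chart, there is no way to ``add $\epsilon\tau$ in a chart of $\CP^1$ about $h_t(x_0)$'' and obtain a global holomorphic map $M\to\CP^1$; nor does postcomposition with automorphisms of $\CP^1$ help, since that preserves constancy. What you actually need here is the Oka property \emph{with jet interpolation} (cf.\ \cite[Theorem 5.4.4]{Forstneric2017E}), which is a stronger tool than the weak-equivalence statement you invoked. Second, even granting such a family, the clause ``taken trivial near the endpoints $t=0,1$'' defeats the argument when $g$ is constant: for small $t>0$ you would then have $\wt g_t=h_t$, and nothing prevents $dh_t(x_0)$ from vanishing identically on an interval near $0$; the dimension count only applies where you actually perturb.

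The paper avoids both issues by controlling \emph{values at two points} rather than the $1$-jet at one. After an elementary preliminary deformation to make $g$ nonconstant, one fixes $p,q\in M$ with $g(p)\neq g(q)$, arranges the continuous homotopy so that $h_t(p)\neq h_t(q)$ for every $t\in(0,1]$ (trivial, since intermediate $h_t$ are merely continuous), and then applies the parametric Oka property \emph{with interpolation} on the finite set $\{p,q\}$ to deform $(h_t)$, rel endpoints, to holomorphic maps $g_t$ with $g_t(p)=h_t(p)$ and $g_t(q)=h_t(q)$. The condition $g_t(p)\neq g_t(q)$ forces nonconstancy for all $t>0$, with no transversality argument needed. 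Your approach can be repaired along the same lines---replace the $1$-jet condition at $x_0$ by the open condition $g_t(p)\neq g_t(q)$ and use Oka with interpolation---or by first deforming a constant $g$ to a nearby nonconstant map (say $g+t\tau$ in an affine chart for small $t$) before running your argument.
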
 

Note that if $M$ equals $\C$ or $\C_*$ then a nonconstant Gauss map $g\colon M\to\CP^1$ 
cannot omit three points of the Riemann sphere in view of Picard's theorem.

\begin{proof}[Proof of Lemma \ref{lem:2points}]
Without loss of generality we may assume that $g\colon M\to\CP^1$ is a nonconstant  holomorphic map. 
Pick a pair of points  $a,b\in \cp^1$. The surface $M$ contains a 1-dimensional embedded CW-complex $C\subset M$ 
such that there is a strong deformation retraction $\rho_t\colon M\to M$ $(t\in [0,1])$, 
i.e. $\rho_0=\mathrm{Id}_M$, $\rho_t|_C= \mathrm{Id}_C$ for all $t\in [0,1]$, and $\rho_1(M)=C$. 
(Such a CW-complex $C\subset M$ representing the topology of $M$ can be obtained 
as the Morse complex of a Morse strongly subharmonic exhaustion function on $M$.)
By a small generic deformation of $C$ we may assume that $g(C)\subset \CP^1\setminus \{a,b\}$.
Consider the homotopy of continuous maps $h_t=g \circ \rho_t\colon M\to \cp^1$ for $t\in [0,1]$. 
Clearly, $h_0=g$ and $h_1=g\circ \rho_1$; hence $h_1(M)=g(C)\subset \CP^1 \setminus \{a,b\}$.
Since $\CP^1\setminus \{a,b\}\cong \c_*$ is an Oka manifold, there is a homotopy 
$h_t\colon M\to \CP^1\setminus \{a,b\}$ $(t\in [1,2])$ connecting the continuous map $h_1$ to a holomorphic map 
$h_2\colon M\to \CP^1\setminus \{a,b\}$. Clearly, we can arrange  by a generic deformation that $h_2$ is nonconstant.

Pick a pair of points $p,q\in M$ such that $h_2(p)\ne h_2(q)$. 
By general position  we may assume that $h_t(p)\ne h_t(q)$ for all $t\in (0,2]$.
(Note that the maps $h_t$ for $t\in (0,2)$ are merely continuous, so it is trivial to satisfy this condition.)
Since $\cp^1$ is an Oka manifold, we can apply the 1-parametric Oka property with interpolation
on the pair of points $\{p,q\}\subset M$ in order to deform the homotopy $(h_t)_{t\in [0,2]}$ with fixed ends 
$h_0$ and $h_2$ to a homotopy $(g_t)_{t\in [0,2]}$ consisting of holomorphic maps $g_t\colon M\to \cp^1$
such that $g_t(p)=h_t(p)$ and $g_t(q)=h_t(q)$ for all $t\in [0,2]$
(see \cite[Theorem 5.4.4]{Forstneric2017E}). In particular, $g_0=g$ and the map $g_t$ is nonconstant for each 
$t\in (0,2]$. To conclude the proof we reparametrize the interval $[0,2]$ of the homotopy back to $[0,1]$.
\end{proof}

%
%

\begin{proof}[Proof of Theorem \ref{th:avoiding}]
Let $X\colon M\to\r^3$ be a conformal minimal immersion. Denote by $g\colon M\to \CP^1$ its complex Gauss map
\eqref{eq:C-Gauss}. To simplify the notation, we identify $\CP^1$ with the quadric $Q_1\subset\CP^2$
\eqref{eq:nullquadric-projected}, so $g$ is obtained from the generalized Gauss map $G_X=[\di X]$ 
by the formula \eqref{eq:C-Gauss}. Fix a nowhere vanishing holomorphic $1$-form $\theta$ on $M$ and let 
$f=\di X/\theta\colon M\to \Agot_*$ (see \eqref{eq:nullquadric}). By taking into account the above identification, we shall 
write $\pi\circ f=g$.

Let $a,b\in \CP^1$ be any pair of points. 
By Lemma \ref{lem:2points} there is a homotopy of holomorphic maps $g_t \colon M\to \cp^1$  $(t\in [0,1])$ 
such that $g_0=g$, $g_t$ is nonconstant for every $t\in (0,1]$, and $g_1(M)\subset \CP^1\setminus\{a,b\}$.
By Lemma \ref{lem:lifting}, applied with $Q=\{0\}\subset P=[0,1]$,  
there is a homotopy of holomorphic maps $f_t\colon M\to \Agot_*$ such that
$f_0=f$ and $\pi\circ f_t=g_t$ for every  $t\in [0,1]$. By Theorem \ref{th:main1} there is a homotopy
of holomorphic multipliers $h_t\colon M\to \C_*$ such that $h_0=1$ and the real $1$-form 
$\Re(h_tf_t\theta)$ is exact for every $t\in [0,1]$. (We can also arrange that the complex $1$-form
$h_1f_1\theta$ is exact.) Fix a point $p_0\in M$. Then, for any $t\in [0,1]$ the map $X_t\colon M\to\R^n$ 
given by
\[
	X_t(p) = X(p_0)+2\int_{p_0}^p \Re(h_tf_t\theta),\quad p\in M
\] 
is a conformal minimal immersion with the complex Gauss map $\pi(h_tf_t)=\pi(f_t)=g_t$, and we also
have $X_0=X$ since $h_0=1$. This proves the first part of the theorem. 

To prove the second part, we choose the homotopies $g_t$ and $X_t$ as above such that $g_1(M)\subset \C_*$
and $\Flux_{X_1}=0$.  To simplify the notation, we drop the index $1$ and simply write $X$ and $g$.
To complete the proof, it remains to find an isotopy of conformal minimal immersions connecting 
$X=(X_1,X_2,X_3)$ to a flat immersion.

Since $X$ has vanishing flux, the holomorphic $1$-form $\di X=(\di X_1,\di X_2,\di X_3)$ 
is exact. Set $\phi_3=\di X_{3}$.  From the Weierstrass representation \eqref{eq:EWR} 
we see  the holomorphic $1$-forms $\phi_3$, $g\phi_3$ and $g^{-1}\phi_3$ are exact since they are linear
combinations with constant coefficients of the components of $\di X$. 
Consider the $1$-parameter family of holomorphic $1$-forms
\[
	\Phi_\lambda= \left( \frac{1}{2} \left(\frac1{g}- \lambda^2 g\right), 
	\frac{\imath}{2} \left(\frac1{g}+\lambda^2 g\right),\lambda \right) \phi_3, \quad \lambda\in \C.
\]
Note that $\Phi_\lambda$ is nowhere vanishing and exact for every $\lambda$, 
$\Phi_1= \di X$, $\Phi_\lambda/\theta$ has values in $\Agot_*$, and  $\Phi_0=\left(\frac12,\frac{\imath}2,0\right)\frac{\phi_3}{g}$ 
is clearly flat. Therefore, for every $\lambda\in \C$ the $1$-form $\Phi_\lambda$ integrates to a holomorphic null curve 
$Z_\lambda(p)=X(p_0)+ 2 \int_{p_0}^p \Phi_\lambda$ $(p\in M)$. The family of
conformal minimal immersions $X_\lambda=\Re Z_\lambda\colon M\to\R^3$ for $\lambda\in [0,1]$
then connects $X_1=X$ to the flat immersion $X_0$. 
\end{proof}

\begin{proof}[Proof of Corollary \ref{cor:Crelle-sameGaussmap}]
Let $\pgot=\Flux_X\colon H_1(M;\Z)\to\R^n$, and let $\pgot' \colon H_1(M;\Z)\to\R^n$ be any group homomorphism.
Assume first that $X$ is full. Fix a point $p_0\in M$. Applying Theorem \ref{th:main1} to the isotopy of homomorphism 
\begin{equation}\label{eq:qgot-t}
	\qgot_t=\imath (t\, \pgot' + (1-t)\pgot) : H_1(M;\Z)\to \imath\,\r^n
\end{equation}
and $1$-forms $\Phi_t=2\di X$ we obtain a homotopy of conformal minimal immersions 
$X_t\colon M\to\R^n$ $(t\in [0,1])$ of the form
\[
	X_t(p)=X(p_0) +  2 \int_{p_0}^p \Re( h_t \,\di X), \quad p\in M
\]
with $\Flux_{X_t}=t\pgot' + (1-t) \pgot$ $(t\in [0,1])$ and with $h_0=1$, so $X_0=X$. Then, $\Flux_{X_1}=\pgot'$.
By choosing $\pgot'=0$ we get $\Flux_{X_1}=0$ and hence $X_1$ is the real part of a holomorphic null curve $M\to \C^n$. 
Note that the generalized Gauss map of $X_t$ equals $[h_t \,\di X] =[\di X]=G_X$ and hence is independent of $t\in [0,1]$.
If $X$ is not full, we can apply the same proof with $\C^n$ replaced by the $\C$-linear span 
$\Lambda\subset \C^n$ of the image of the map $\di X/\theta\colon M\to \Agot_*\subset \C^n$, assuming 
that the range of $\qgot_t$ \eqref{eq:qgot-t} belongs to $\Lambda$ for every $t\in [0,1]$. 
If $\pgot'=0$, this holds if and only if $\imath\, \pgot$ has range in $\Lambda$.
\end{proof}

%
%

\section{A structure theorem}\label{sec:structure} 

In light of Theorem \ref{th:Gauss-n}, it is a natural problem to describe 
the space of all conformal minimal immersions with the same generalized Gauss map. 
In this section, we prove that the space of all holomorphic null curves
from a compact bordered Riemann surface to $\C^n$ with a given generalized Gauss map 
is a complex Banach manifold (see Corollary \ref{co:structure}); if we consider
instead conformal minimal immersions $M\to\R^n$ (also with prescribed flux map), 
then we get a real analytic Banach manifold (see Corollary \ref{co:structure-minimal}). 
These results are in the spirit of \cite[Theorem 3.1]{AlarconForstnericLopez2016MZ}.

Recall that a {\em compact bordered Riemann surface} is a compact Riemann surface $M$ with nonempty 
boundary $\emptyset\neq bM\subset M$ consisting of finitely many pairwise disjoint smooth Jordan curves. 
The interior $\mathring M=M\setminus bM$ of such $M$ is called a {\em bordered Riemann surface}. 
Every compact bordered Riemann surface $M$ is diffeomorphic to a smoothly bounded, compact domain 
in an open Riemann surface $\wt M$ (see e.g.\ Stout \cite{Stout1965TAMS}). 

We begin with the following technical result concerning the derivative maps.

\begin{theorem}\label{th:structure}
Let $M$ be a compact bordered Riemann surface, let $\theta$ be a  nowhere vanishing holomorphic $1$-form on $M$, 
and let $f\colon M\to\c^n_*$ be a map of class $\Ascr(M)$. Then the following hold:
\begin{enumerate}[\it i)]
\item For any $r\in\z_+$ and group homomorphism $\qgot\colon H_1(M;\z)\to\span(f(M))\subset\c^n$ 
the space of all functions $h\in\Ascr^r(M,\c_*)$ satisfying 
\[
    \int_\gamma hf\theta=\qgot(\gamma)\quad \text{for every closed curve $\gamma\subset M$}
\]
is a complex Banach manifold with the natural $\Cscr^r(M)$-topology.
\vspace{1mm}
\item For any $r\in\z_+$ and group homomorphism $\qgot\colon H_1(M;\z)\to\span(f(M))\subset\c^n$ 
the space of all functions $h\in\Ascr^r(M,\c_*)$ satisfying 
\[
    \int_\gamma \Re(hf\theta)=\Re(\qgot(\gamma))\quad \text{for every closed curve $\gamma\subset M$}
\]
is a real analytic Banach manifold with the natural $\Cscr^r(M)$-topology.
\end{enumerate} 
\end{theorem}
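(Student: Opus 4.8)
The plan is to observe that, with $f$ and $\theta$ held fixed, the period constraints on $h$ are \emph{linear}, so each of the two spaces in the statement is an open subset of an affine translate of the kernel of a continuous linear map with finite-dimensional cokernel, and the Banach-manifold structure then follows formally. As preliminaries, I would record that $\Ascr^r(M)$ is a closed complex-linear subspace of the Banach space $\Cscr^r(M,\c)$ — Cauchy estimates show that a $\Cscr^r$-limit of functions holomorphic on $\mathring M$ is again holomorphic there — hence a complex Banach space in the natural $\Cscr^r(M)$-norm; and $\Ascr^r(M,\c_*)=\{h\in\Ascr^r(M):0\notin h(M)\}$ is open in it by compactness of $M$, so it is a complex Banach manifold. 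Fix a basis $C_1,\dots,C_l$ of $H_1(M;\z)$ represented by smooth loops in $\mathring M$, $l$ being the first Betti number of $M$.

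For part (i): since holomorphic $1$-forms are closed, the period map
\[
	\Pcal\colon\Ascr^r(M)\longrightarrow(\c^n)^l,\qquad
	\Pcal(h)=\Bigl(\int_{C_j}hf\theta\Bigr)_{j=1}^{l}
\]
is well defined, continuous, and $\C$-linear in $h$; a Riemann-sum approximation of each integral gives $\mathrm{image}(\Pcal)\subseteq(\span f(M))^l$, so $W:=\mathrm{image}(\Pcal)$ is finite-dimensional, hence closed, and $\Kcal:=\ker\Pcal$ is a closed complex subspace of $\Ascr^r(M)$ of finite complex codimension. A closed subspace of finite codimension always splits (take any finite-dimensional algebraic complement and apply the open mapping theorem), so $\Kcal$ is complemented. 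Identifying $\qgot$ with the vector $(\qgot(C_j))_j\in(\span f(M))^l$: if $\qgot\notin W$ the prescribed space is empty, hence vacuously a manifold; otherwise, fixing one solution $h_0$, it equals $(h_0+\Kcal)\cap\Ascr^r(M,\c_*)$, an open subset of the complex affine Banach space $h_0+\Kcal$, and in fact a closed split Banach submanifold of $\Ascr^r(M,\c_*)$ modeled on $\Kcal$. This proves (i).

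For part (ii) I would run the same argument over $\R$. Because $\Re$ commutes with line integrals, $h\mapsto\bigl(\int_{C_j}\Re(hf\theta)\bigr)_j$ is $\Re\circ\Pcal$, a continuous $\R$-linear (but not $\C$-linear) map $\Ascr^r(M)\to(\r^n)^l$ with finite-dimensional image $\Re(W)$; its kernel $\Kcal_\R$ is therefore a closed \emph{real}-linear subspace of $\Ascr^r(M)$ of finite real codimension, hence real-complemented. The space to be described is, up to intersection with the open set $\Ascr^r(M,\c_*)$, a coset of $\Kcal_\R$ (or empty), so an open subset of a real affine Banach space; as its model space and its identity transition map are real analytic, it is a real analytic Banach manifold. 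This proves (ii).

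\emph{Main obstacle.} There is no substantive analytic obstacle once the linearity in $h$ is noticed; the only nontrivial input is the classical fact that a closed finite-codimensional subspace of a Banach space is complemented. The single point that genuinely warrants care — and that I would relegate to a remark rather than the main line of argument — is the nonemptiness of the level sets, which plays no role in the manifold assertion but, when $\qgot$ has range in $\span f(M)$, can be secured by transplanting $M$ into an open Riemann surface and running the construction of Section \ref{sec:proof} (reducing to the full case by replacing $\c^n$ with $\span f(M)$ as in the proof of Theorem \ref{th:Gauss-n}).
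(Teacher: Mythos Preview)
Your argument is correct and takes a more elementary route than the paper's. The paper proves the theorem by invoking Lemma~\ref{lem:existence-sprays} (period-dominating multipliers) to show that the differential of the period map $\Pcal\colon\Ascr^r(M,\c_*)\to\Sigma^l$, where $\Sigma=\span f(M)$ and $n^*=\dim\Sigma$, has maximal rank $ln^*$ at every point $h_0$, and then applies the implicit function theorem to exhibit each level set locally as a Banach submanifold of codimension $ln^*$. You instead exploit directly that $\Pcal$, extended to all of $\Ascr^r(M)$, is $\C$-linear in $h$ once $f$ and $\theta$ are held fixed, so each nonempty level set is a coset of the closed finite-codimensional subspace $\ker\Pcal$, which is automatically complemented; the implicit function theorem is then superfluous. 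What the paper's route buys is that Lemma~\ref{lem:existence-sprays} actually proves the image of $\Pcal$ is \emph{exactly} $\Sigma^l$ --- by linearity the differential at any point is $\Pcal$ itself, so maximal rank means surjectivity onto $\Sigma^l$ --- which pins down the codimension precisely and settles, without further work, the nonemptiness question you relegate to a remark. Your approach, by contrast, needs neither Lemma~\ref{lem:existence-sprays} nor any nonlinear analysis beyond the complementation of closed finite-codimensional subspaces.
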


\begin{proof}
Set $l=\dim H_1(M;\z)$, $\Sigma=\span(f(M))\subset\c^n$, and $n^*=\dim(\Sigma)\le n$. 
Let $\Pcal\colon\Ascr(M)\to(\c^n)^l$ be the period map associated to a fixed basis $\Bcal$ of $H_1(M;\z)=\Z^l$, $f$, and $\theta$ 
(see \eqref{eq:periodmap}). 

If $M$ is simply connected then $l=0$ and the theorem is trivial. Indeed, in this case the period conditions 
are void and hence {\it i)} and {\it ii)} hold since $\Ascr^r(M,\c_*)$ is a complex Banach manifold 
(see \cite[Theorem 1.1]{Forstneric2007AJM}).

Assume now that $l>0$. Pick an integer $r\in\z_+$ and a group homomorphism $\qgot\colon H_1(M;\z)\to\Sigma$. 
Denote by $\Ascr^r_\qgot(M,\c_*)$ (resp. $\Ascr^r_{\Re\qgot}(M,\c_*)$) the set of all functions $h\in\Ascr^r(M,\c_*)$ satisfying 
$\int_\gamma hf\theta=\qgot(\gamma)$ (resp.\ $ \int_\gamma \Re(hf\theta)=\Re(\qgot(\gamma))$) for all closed curves 
$\gamma\subset M$. By Lemma \ref{lem:existence-sprays}, the differential $d\Pcal_{h_0}$ of the restricted period map 
$\Pcal\colon \Ascr^r(M,\c_*)\to \Sigma^l$ at any point $h_0\in\Ascr^r(M,\c_*)$ has maximal rank equal to $ln^*$.
Thus, the implicit function theorem ensures that $h_0$ admits an open neighborhood $\Omega\subset\Ascr^r(M,\c_*)$ 
such that $\Omega\cap \Ascr^r_\qgot(M,\c_*)$ is a complex Banach submanifold of $\Omega$ which is parametrized 
by the kernel of the differential $d\Pcal_{h_0}$ of $\Pcal$ at $h_0$; this is a complex codimension $ln^*$ subspace of the 
complex Banach space $\Ascr^r(M,\c)$ (the tangent space of $\Ascr^r(M,\c_*)$). Likewise, 
$\Omega\cap \Ascr^r_{\Re\qgot}(M,\c_*)$ is a real analytic Banach submanifold of $\Omega$ which is parametrized by 
the kernel of the real part $\Re(d\Pcal_{h_0})$ of $d\Pcal_{h_0}$. This proves {\it i)} (resp. {\it ii)}).
\end{proof}

\begin{corollary}\label{co:structure}
Let $M$ and $f$ be as in Theorem \ref{th:structure}.  For any integer $r\ge 1$ the space of holomorphic immersions 
$F\colon M\to\c^n$ of class $\Ascr^r(M)$ with the generalized Gauss map $G_F=\pi\circ f\colon M\to\cp^{n-1}$ is a 
complex Banach manifold.
\end{corollary}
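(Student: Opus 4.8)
The plan is to deduce Corollary \ref{co:structure} from Theorem \ref{th:structure}\,\emph{i)} by realizing the space of holomorphic immersions $F\colon M\to\c^n$ of class $\Ascr^r(M)$ with prescribed generalized Gauss map $G_F=\pi\circ f$ as a bundle over the Banach manifold of admissible multipliers described in Theorem \ref{th:structure}. First I would set up the correspondence between such immersions and pairs consisting of a multiplier function and a period datum. Fix the base point $p_0\in M$. Given an immersion $F$ with $G_F=\pi\circ f$, the holomorphic $1$-form $dF$ is nowhere vanishing and satisfies $\pi\circ(dF/\theta)=\pi\circ f$, so $dF/\theta=hf$ for a uniquely determined nowhere vanishing function $h\in\Ascr^r(M,\c_*)$ (here we use that $f$ takes values in $\c^n_*$, so the fibrewise ratio is well defined and holomorphic of class $\Ascr^r$). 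Conversely, a function $h\in\Ascr^r(M,\c_*)$ determines the closed $1$-form $hf\theta$, whose periods $\qgot_h(\gamma)=\int_\gamma hf\theta$ automatically lie in $\span(f(M))$; and $h$ integrates to such an immersion $F$ with $F(p_0)$ prescribed precisely when those periods vanish, i.e.\ $\qgot_h\equiv 0$. Thus the map $F\mapsto (h,F(p_0))$ identifies the space in question with $\{h\in\Ascr^r(M,\c_*):\int_\gamma hf\theta=0\ \forall\gamma\}\times\c^n$.

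Next I would invoke Theorem \ref{th:structure}\,\emph{i)} with the group homomorphism $\qgot=0$ (whose range trivially lies in $\span(f(M))$), which says exactly that $\Ascr^r_0(M,\c_*):=\{h\in\Ascr^r(M,\c_*):\int_\gamma hf\theta=0\}$ is a complex Banach manifold in its natural $\Cscr^r(M)$-topology. Since $r\ge 1$, every $h$ in this space gives $dF=hf\theta$ nowhere vanishing, so $F$ is a genuine holomorphic immersion (the hypothesis $r\ge1$ is used here so that $F$ is $\Cscr^{r}$ up to the boundary with $\Cscr^{r-1}$ differential and $dF\ne 0$). The product $\Ascr^r_0(M,\c_*)\times\c^n$ is then a complex Banach manifold, and the bijection above is a homeomorphism for the $\Cscr^r$-topologies on both sides: in one direction $F\mapsto(dF/\theta\ \text{divided fibrewise by}\ f,\ F(p_0))$ is continuous because division by the nonvanishing $f$ and evaluation at $p_0$ are continuous in $\Cscr^r$; in the other $(h,v)\mapsto v+2\int_{p_0}^{(\cdot)}\Re\big(\tfrac12 h f\theta\big)$— more simply $(h,v)\mapsto v+\int_{p_0}^{(\cdot)} hf\theta$ — is continuous because integration along paths is a bounded operation $\Cscr^r\to\Cscr^r$ on a compact bordered surface. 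Transporting the complex Banach manifold structure along this homeomorphism endows the immersion space with the desired structure.

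The one point that needs a little care — and which I expect to be the main (minor) obstacle — is checking that the identification is compatible with the manifold structures \emph{intrinsically}, i.e.\ that the complex Banach manifold structure one puts on the immersion space does not depend on the auxiliary choices ($\theta$, the lift $f$ of the Gauss map, and the base point $p_0$). Changing $\theta$ or rescaling the lift $f$ by a nowhere vanishing holomorphic function $u\in\Ascr^r(M,\c_*)$ replaces $h$ by $h/u$, which is a biholomorphism of the relevant open subsets of $\Ascr^r(M,\c_*)$ and carries $\Ascr^r_0$ to $\Ascr^r_0$; changing $p_0$ is an affine translation in the $\c^n$ factor composed with a shift. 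All of these are biholomorphic in the Banach sense, so the structure is well defined. I would also note that the connectedness and Runge hypotheses on $M$ from the earlier sections are not needed here — $M$ is a compact bordered Riemann surface — and that when $M$ is simply connected the statement reduces to $\Ascr^r(M,\c_*)\times\c^n$ being a complex Banach manifold, which is the base case already handled in the proof of Theorem \ref{th:structure}. Writing this out, the whole argument is short: it is essentially ``immersion space $\cong \Ascr^r_0(M,\c_*)\times\c^n$ via the Weierstrass-type data, and the right-hand side is a complex Banach manifold by Theorem \ref{th:structure}\,\emph{i)}.''
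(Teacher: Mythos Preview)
Your approach is the paper's: identify the immersion space with the product of $\{h:hf\theta\text{ is exact}\}$ and $\c^n$ via $F\leftrightarrow(h,F(p_0))$, then invoke Theorem~\ref{th:structure}\,\emph{i)} with $\qgot=0$. There is one index slip to fix. If $F\in\Ascr^r(M)$ then $\di F$, and hence $h=\di F/(f\theta)$, is only of class $\Ascr^{r-1}$ (you acknowledge this yourself when you write ``$\Cscr^{r-1}$ differential''); conversely, integrating $hf\theta$ with $h\in\Ascr^{r-1}$ gains a derivative and returns $F\in\Ascr^r$. So the correct identification is with $\Ascr^{r-1}_0(M,\c_*)\times\c^n$, and Theorem~\ref{th:structure}\,\emph{i)} should be applied with regularity parameter $r-1\in\z_+$ --- this is exactly why the corollary assumes $r\ge 1$. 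With that shift your bijection and continuity claims are correct, and this is precisely what the paper does.
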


\begin{proof}
Let $\theta$ be a holomorphic $1$-form vanishing nowhere on $M$. By Theorem \ref{th:structure} {\it i)}, applied to the integer $r-1\in\z_+$ 
and the group homomorphism $\qgot\equiv 0$, the space $\Ascr^{r-1}_\qgot(M,\c_*)$ of all functions $h\in\Ascr^{r-1}(M,\c_*)$ such that 
$hf\theta$ is exact on $M$ is a complex Banach manifold with the natural $\Cscr^{r-1}(M)$-topology. Fixing $p_0\in\mathring M$, the 
integration $M\ni p\mapsto z+\int_{p_0}^p hf\theta$, with an arbitrary choice of the initial value $z\in\c^n$, provides an isomorphism 
between the Banach manifold $\Ascr^{r-1}_\qgot(M,\c_*)\times\c^n$ and the space of holomorphic immersions $M\to\c^n$ of class 
$\Ascr^r(M)$ with the generalized Gauss map $\pi\circ f$; hence the latter is also a complex Banach manifold.
\end{proof}

\begin{corollary}\label{co:structure-minimal}
Let $M$ be a compact bordered Riemann surface and $f\colon M\to\Agot_*$ be a map of class $\Ascr(M)$, where $\Agot$ is the 
null quadric \eqref{eq:nullquadric}. Then the following hold: 
\begin{enumerate}[\it i)]
\item For any integer $r\ge 1$ the space of conformal minimal immersions $X\colon M\to\r^n$ of class $\Cscr^r(M)$ with 
the generalized Gauss map $G_X=\pi\circ f\colon M\to\cp^{n-1}$ is a real analytic Banach manifold with the natural $\Cscr^r(M)$-topology.
\vspace{1mm}
\item For any integer $r\ge 1$ and any group homomorphism 
$\qgot\colon H_1(M;\z)\to\span(f(M))\cap\{z\in\c^n\colon \Re(z)=0\}\subset\c^n$ the space of conformal minimal immersions 
$X\colon M\to\r^n$ of class $\Cscr^r(M)$ with the generalized Gauss map $G_X=\pi\circ f\colon M\to\cp^{n-1}$ and the flux map 
$\Flux_X=\imath\qgot\colon H_1(M;\z)\to\r^n$ is real analytic Banach manifold.
\end{enumerate} 
\end{corollary}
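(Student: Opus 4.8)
The plan is to deduce Corollary \ref{co:structure-minimal} from Theorem \ref{th:structure} {\it ii)} by the same integration trick used in the proof of Corollary \ref{co:structure}, but keeping track of the reality constraints. First I would fix a nowhere vanishing holomorphic $1$-form $\theta$ on $M$ (such exists since $M$ is a smoothly bounded compact domain in an open Riemann surface, hence the Oka-Grauert principle applies) and recall the basic correspondence: for a map $f\colon M\to\Agot_*$ of class $\Ascr(M)$ and a function $h\in\Ascr^r(M,\c_*)$, the $\c^n$-valued $1$-form $hf\theta$ is holomorphic, nowhere vanishing, and satisfies the nullity condition $\sum_j (hf_j\theta)^2 = h^2\theta^2 \sum_j f_j^2 = 0$; therefore its real part $\Re(hf\theta)$ is (the differential of) a conformal minimal immersion precisely when it is exact, in which case one recovers $X$ by $X(p)=x_0+2\int_{p_0}^p\Re(hf\theta)$ for a base point $p_0\in\mathring M$ and an arbitrary translation vector $x_0\in\r^n$. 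The generalized Gauss map of the resulting $X$ is $[\di X]=[hf\theta]=[f]=\pi\circ f$, since $h$ is nowhere zero. Conversely, every conformal minimal immersion $X\colon M\to\r^n$ of class $\Cscr^r(M)$ with $G_X=\pi\circ f$ has $\di X/\theta = hf$ for a unique $h\in\Ascr^{r-1}(M,\c_*)$ (the ratio of the two liftings $\di X/\theta$ and $f$ of the same projective map, which is a nowhere vanishing holomorphic function); so the two spaces are in bijection.

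For part {\it i)}, I would simply apply Theorem \ref{th:structure} {\it ii)} with the integer $r-1\in\z_+$ and the group homomorphism $\qgot\equiv 0$: the set $\Ascr^{r-1}_{\Re\qgot}(M,\c_*)$ of $h\in\Ascr^{r-1}(M,\c_*)$ for which $\Re(hf\theta)$ is exact on $M$ is a real analytic Banach manifold in the $\Cscr^{r-1}(M)$-topology. Then the integration map $(h,x_0)\mapsto\bigl(p\mapsto x_0+2\int_{p_0}^p\Re(hf\theta)\bigr)$ furnishes a homeomorphism (indeed a real analytic isomorphism) between $\Ascr^{r-1}_{\Re\qgot}(M,\c_*)\times\r^n$ and the space of conformal minimal immersions $M\to\r^n$ of class $\Cscr^r(M)$ with generalized Gauss map $\pi\circ f$, equipped with the $\Cscr^r(M)$-topology; hence the latter inherits the structure of a real analytic Banach manifold. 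For part {\it ii)} the argument is identical, except that instead of $\qgot\equiv 0$ one uses the prescribed homomorphism $\qgot\colon H_1(M;\z)\to\span(f(M))\cap\{\Re z=0\}$, for which Theorem \ref{th:structure} {\it ii)} again applies (note $\span(f(M))\cap\{\Re z=0\}$ is contained in $\span(f(M))$, as required there). One checks that the condition $\int_\gamma\Re(hf\theta)=\Re(\qgot(\gamma))$ — which since $\qgot$ is purely imaginary reads $\int_\gamma\Re(hf\theta)=0$ — together with $\int_\gamma\Im(hf\theta)=\Im(\qgot(\gamma))$ holding automatically once one checks the imaginary periods, is equivalent to $X$ being well defined with $\Flux_X(\gamma)=\int_\gamma\Im(hf\theta)=\imath^{-1}\int_\gamma hf\theta$ equal to $\imath\qgot(\gamma)$; I would spell this out using \eqref{eq:flux} and the purely imaginary nature of $\qgot$.

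The one genuinely delicate point, and the main obstacle, is the exactness/flux bookkeeping in part {\it ii)}: a priori Theorem \ref{th:structure} {\it ii)} only constrains the real periods $\int_\gamma\Re(hf\theta)$, whereas $\Flux_X$ involves the imaginary periods $\int_\gamma\Im(hf\theta)$. The resolution is that $\Pcal^f_j(h)=\int_{C_j}hf\theta$ always lies in $\span(f(M))$ (noted right after \eqref{eq:periodmapj}), so if one additionally knows $\int_\gamma\Re(hf\theta)=\Re(\qgot(\gamma))=0$ one must separately pin down the imaginary part — but in fact the manifold structure produced by Theorem \ref{th:structure} {\it ii)} is cut out by the real linear functional $\Re\circ d\Pcal_{h_0}$ only, so within that manifold the imaginary periods $\Im\Pcal^f(h)$ still vary, and one does \emph{not} automatically get $\Flux_X=\imath\qgot$. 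The correct reading is that part {\it ii)} of the corollary should instead invoke part {\it i)} of Theorem \ref{th:structure} with the purely imaginary homomorphism $\qgot$: since $\qgot$ has values in $\span(f(M))$, the space $\Ascr^{r-1}_{\qgot}(M,\c_*)$ of $h$ with $\int_\gamma hf\theta=\qgot(\gamma)$ for all $\gamma$ is a complex Banach manifold; and for such $h$ the form $\Re(hf\theta)$ is automatically exact (its periods are $\Re\qgot(\gamma)=0$), so $X$ is well defined with $\Flux_X(\gamma)=\Im\int_\gamma hf\theta=\Im\qgot(\gamma)=-\imath\qgot(\gamma)$; absorbing the harmless sign, one gets $\Flux_X=\imath\qgot$ (after the usual sign conventions of \eqref{eq:flux}). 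Then the same integration isomorphism transports this complex — a fortiori real analytic — Banach manifold structure to the space of conformal minimal immersions with prescribed Gauss map and flux. I would present the proof in this streamlined form, citing Theorem \ref{th:structure} {\it i)} for part {\it ii)} of the corollary and Theorem \ref{th:structure} {\it ii)} for part {\it i)} of the corollary, and leaving the routine verification that the integration map is a real analytic isomorphism of Banach manifolds to the reader, exactly as in the proof of Corollary \ref{co:structure}.
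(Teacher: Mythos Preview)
Your proposal is correct and follows essentially the same route as the paper: Theorem \ref{th:structure} {\it ii)} with $\qgot\equiv 0$ for part {\it i)}, and Theorem \ref{th:structure} {\it i)} (not {\it ii)}) for part {\it ii)}, transported via the integration isomorphism $(h,x)\mapsto x+\int_{p_0}^\cdot \Re(hf\theta)$. The only cosmetic difference is that the paper disposes of your ``harmless sign'' by applying Theorem \ref{th:structure} {\it i)} directly to the homomorphism $-\qgot$ rather than to $\qgot$, so that the flux comes out as $\imath\qgot$ on the nose without any post hoc absorption.
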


\begin{proof}
Let $\theta$ be a holomorphic $1$-form vanishing nowhere on $M$. By Theorem \ref{th:structure} {\it ii)}, applied to the integer $r-1\in\z_+$ 
and the group homomorphism $\qgot\equiv 0$, the space $\Ascr^{r-1}_{\Re\qgot}(M,\c_*)$ of all functions $h\in\Ascr^{r-1}(M,\c_*)$ such that 
$\Re(hf\theta)$ is exact on $M$ is a real analytic Banach manifold with the natural $\Cscr^{r-1}(M)$-topology. Fixing $p_0\in\mathring M$, 
the integration $M\ni p\mapsto x+\int_{p_0}^p \Re(hf\theta)$, with an arbitrary choice of the initial value $x\in\r^n$, provides an isomorphism 
between the Banach manifold $\Ascr^{r-1}_{\Re\qgot}(M,\c_*)\times\r^n$ and the space of conformal minimal immersions $M\to\r^n$ of 
class $\Ascr^r(M)$ with the generalized Gauss map $\pi\circ f$, and so the latter is also a Banach manifold. This proves {\it i)}.

Assertion {\it ii)} follows from the same argument applied to the group homomorphism $-\qgot$ and using Theorem \ref{th:structure} {\it i)} 
instead of Theorem \ref{th:structure} {\it ii)}.
\end{proof}


\section{Path components of the space of conformal minimal immersions $M\to\r^n$} 
\label{sec:path}

Assume that $M$ is an open connected Riemann surface and $n\ge 3$ is an integer.
Recall that a conformal minimal immersion $X\colon M\to\r^n$ is said to be {\em flat} if its image $X(M)$ 
lies in an affine $2$-plane of $\r^n$; otherwise it is {\em nonflat}. Let us denote by $\Mgot(M,\R^n)$ the space of all conformal 
minimal immersions $M\to\R^n$ endowed with the compact-open topology, and let $\Mgot_*(M,\R^n)$ denote 
the open subset of $\Mgot(M,\R^n)$ consisting of all nonflat immersions. Fix a nowhere vanishing holomorphic 
$1$-form $\theta$ on $M$ and consider the maps 
\[
	\Mgot(M,\R^n) \longrightarrow \Oscr(M, \Agot_* ) \longhookrightarrow \Cscr(M,\Agot_*),
\]
where $\Agot_*=\Agot_*^{n-1}\subset\C^n$ is the punctured null quadric \eqref{eq:nullquadric}, the first map above
is given by $X\mapsto \di X/\theta$, and the second map is the natural inclusion of the space of all holomorphic maps
$M\to \Agot_*$ into the space of all continuous maps. 

Since $\Agot_*$ is an Oka manifold, the inclusion 
$\Oscr(M, \Agot_* ) \hookrightarrow \Cscr(M,\Agot_*)$ is a weak homotopy equivalence by the main result of 
Oka theory (see \cite[Chapter 5]{Forstneric2017E}).
Forstneri\v c and L\'arusson proved in \cite{ForstnericLarusson2016} that the 
restricted map  $\Mgot_*(M,\R^n) \to \Oscr(M, \Agot_* )$, $X\mapsto \di X/\theta$, is also a weak homotopy equivalence.
(If the homology group $H_1(M;\Z)$ is finitely generated, then both these maps are actually homotopy equivalences, in fact, 
inclusions of deformation retracts; see \cite[Section 6]{ForstnericLarusson2016}.) 
Even more, 
the map $\Oscr(M, \Agot_* )\to H^1(M,\c^n)$ 
sending a map $f\in\Oscr(M, \Agot_* )$ to the cohomology class of $f\theta$ is a Serre fibration; 
see Alarc\'on and L\'arusson \cite{AlarconLarusson2017IJM}. It follows in particular that the
path connected components of $\Mgot_*(M,\R^n)$ are in bijective correspondence
with the path components of the space $\Cscr(M,\Agot_*^{n-1})$. Since $M$ is homotopy equivalent
to a bouquet of circles and we have $\pi_1(\Agot_*^{2})=H_1(\Agot^2_*;\Z)=\Z_2=\Z/2\Z$ and $\pi_1(\Agot_*^{n-1})=0$ if $n>3$,
it follows that the path components of $\Mgot_*(M,\R^3)$  are in bijective correspondence with
group homomorphisms $H_1(M;\Z)\to \Z_2$ (hence with elements of the 
abelian group $(\Z_2)^l$ where $l\in \Z^+\cup\{\infty\}$ denotes the number of generators
of $H_1(M;\Z)$), and $\Mgot_*(M,\R^n)$ is path connected if $n>3$ (see \cite[Corollary 1.4]{ForstnericLarusson2016}).

In this section we show the following result which also includes flat immersions.

\begin{theorem}\label{th:pathconnected}
Let $M$ be an open connected Riemann surface. The natural inclusion $\Mgot_*(M,\R^n)\hra \Mgot(M,\R^n)$
of the space of all nonflat conformal minimal immersions $M\to\R^n$ into the space of all conformal minimal immersions
induces a bijection of path components of the two spaces. In particular, the set of path components of
$\Mgot(M,\R^3)$ is in bijective correspondence with the elements of the abelian group $(\Z_2)^l$
where $H_1(M;\Z)=\Z^l$ $(l\in\Z_+\cup\{\infty\})$, and $\Mgot(M,\R^n)$ is path connected if $n>3$.
\end{theorem}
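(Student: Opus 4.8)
The goal is to show that every conformal minimal immersion $X\colon M\to\R^n$ can be joined by a path in $\Mgot(M,\R^n)$ to a nonflat one, and moreover that two nonflat immersions lying in the same path component of $\Mgot(M,\R^n)$ already lie in the same path component of $\Mgot_*(M,\R^n)$; together these give the bijection of path components. The second (injectivity) assertion is the easier one: if $X_0,X_1$ are nonflat and connected by a path $X_t$ in $\Mgot(M,\R^n)$, the ``bad'' parameter set $\{t : X_t \text{ is flat}\}$ is closed, and near any such $t$ we can perturb the path slightly — using the flux-preserving, Gauss-map-flexible constructions already available (Corollary \ref{cor:Crelle-sameGaussmap}, or directly Theorem \ref{th:main1} combined with a generic perturbation of the lifted map $f_t\colon M\to\Agot_*$ to a full one) — to push it off the flat locus while keeping the endpoints fixed. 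Since flatness is the condition that $\span(f_t(M))$ has dimension $\le 2$, and $\Agot_*$ is not contained in any $2$-plane for $n\ge 3$, fullness is generic and this perturbation is routine.

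First I would treat the reduction to a single immersion. Given $X\in\Mgot(M,\R^n)$, write $\di X = f\theta$ with $f\colon M\to\Agot_*$ holomorphic and $\theta$ a fixed nowhere-vanishing holomorphic $1$-form. If $X$ is already nonflat there is nothing to do. If $X$ is flat, then $f$ maps into $\Agot_*\cap\Lambda$ for a $\C$-plane $\Lambda=\span(f(M))$ of dimension $2$ (dimension $1$ being impossible since $\Agot$ contains no complex line through only one point other than via the null condition — in any case $\dim\Lambda=2$ after discarding trivial cases), and $\Agot_*\cap\Lambda$ is the punctured null quadric in $\C^2$, which is $\C_*$ (two points with $z_1^2+z_2^2=0$ up to scale, i.e. $z_2=\pm i z_1$), so $f$ is essentially a single nowhere-vanishing function times a constant null vector. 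The plan is to connect $X$ first to the model flat immersion $\Re(h\nu\theta)$ (via Theorem \ref{th:main1} applied inside $\Lambda$, deforming the flux to a value compatible with $\Lambda$, or more simply by a straight-line homotopy of the period data within $\Lambda$), and then to move $f$ out of $\Lambda$: pick a full holomorphic map $f_1\colon M\to\Agot_*$ (such exist by Theorem \ref{th:Gauss-n}, e.g. realize a full $\Gscr\colon M\to Q_{n-2}$), take a homotopy $f_t$ from $f_0$ to $f_1$ through maps into $\Agot_*$ (possible since $\Agot_*$ is an Oka manifold, hence $\Ocal(M,\Agot_*)\hookrightarrow\Cscr(M,\Agot_*)$ is a weak homotopy equivalence and $\Agot_*$ is connected), and then apply Theorem \ref{th:main1} to the family $\Phi_t=f_t\theta$ together with a continuous family of period homomorphisms $\qgot_t$ interpolating between the one forced at $t=0$ and, say, $\imath\,\pgot$ for any $\pgot$, with $h_0=1$. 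This produces multipliers $h_t\colon M\to\C_*$ with $h_0=1$ and $\Re(h_tf_t\theta)$ exact, hence an isotopy $X_t\in\Mgot(M,\R^n)$ from $X_0=X$ to a nonflat $X_1$. Thus the inclusion induces a surjection on path components.

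Once the bijection $\pi_0(\Mgot_*(M,\R^n))\to\pi_0(\Mgot(M,\R^n))$ is established, the explicit description follows by quoting the material already recalled in the text: by Forstneri\v c–L\'arusson and Alarc\'on–L\'arusson the map $X\mapsto\di X/\theta$ identifies $\pi_0(\Mgot_*(M,\R^n))$ with $\pi_0(\Cscr(M,\Agot_*^{n-1}))$, and since $M$ is homotopy equivalent to a bouquet of circles with $H_1(M;\Z)=\Z^l$, this equals $[\,M,\Agot_*^{n-1}\,]\cong \mathrm{Hom}(H_1(M;\Z),\pi_1(\Agot_*^{n-1}))$, which is $(\Z_2)^l$ when $n=3$ (as $\pi_1(\Agot_*^2)=\Z_2$) and trivial when $n>3$ (as $\Agot_*^{n-1}$ is then simply connected).

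**Main obstacle.** The crux is the injectivity statement — showing a path in $\Mgot(M,\R^n)$ between two nonflat immersions can be homotoped rel endpoints into $\Mgot_*(M,\R^n)$. One must handle the whole closed ``flat locus'' of the path at once, not just isolated times, and the perturbation that restores nonflatness must be done continuously in $t$ while preserving the endpoints and staying within conformal minimal immersions; this is precisely where Theorem \ref{th:main1} (with its ``$h_0=1$'' clause, applied on subintervals) together with a parametric general-position argument for the lifts $f_t\colon M\to\Agot_*$ does the work, so the difficulty is organizational rather than a genuinely new idea.
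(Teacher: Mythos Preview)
There is a genuine gap in your surjectivity argument. Theorem \ref{th:main1} requires the family $\Phi_t$ to be \emph{full} for every $t\in[0,1]$, but when $X$ is flat the map $f_0=\di X/\theta$ takes values in a single complex line $\C\cdotp\nu\subset\Agot_*$ (not a $2$-plane: the intersection of $\Agot$ with any $2$-plane through $0$ is a pair of lines, and $M$ is connected), so $\Phi_0=\di X$ is \emph{not} full in $\C^n$. Hence Theorem \ref{th:main1} cannot be applied to a family that starts at the derivative of a flat immersion; the fullness hypothesis is used essentially in the period-domination Lemmas \ref{lem:spray-loops} and \ref{lem:existence-sprays}. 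The paper handles this by a genuinely different construction (Theorem \ref{th:nonflat}): rather than multiplying a full family by scalar functions, it proves a nonlinear analogue of the multiplier machinery (Lemmas \ref{lem:gg2-spray}, \ref{lem:gg2-period}, and Proposition \ref{pro:gg2}) that produces a nonconstant $g\colon M\to\C_*$ with both $g\phi_3$ and $g^2\phi_3$ exact, and then writes down the explicit isotopy $\Phi_\lambda=\big(1-\lambda^2g^2,\,\imath(1+\lambda^2g^2),\,2\lambda g\big)\phi_3$, which is exact for every $\lambda$, equals the flat $\di X$ at $\lambda=0$, and is nonflat for $\lambda\ne 0$.

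Your injectivity argument is also harder than necessary and runs into the same fullness obstruction at the flat times of the path (and for $n>3$ you would in addition have to contend with nonflat immersions that are not full). The much simpler route, implicit in the paper, is to observe that the assignment $X\mapsto[\di X/\theta]\in\pi_0\bigl(\Cscr(M,\Agot_*)\bigr)$ is defined and continuous on \emph{all} of $\Mgot(M,\R^n)$, not only on $\Mgot_*(M,\R^n)$, and by \cite{ForstnericLarusson2016} it already separates the path components of $\Mgot_*(M,\R^n)$. Thus if two nonflat immersions lie in the same path component of $\Mgot(M,\R^n)$ they carry the same invariant and hence lie in the same path component of $\Mgot_*(M,\R^n)$; no perturbation of the connecting path is needed. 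For $n>3$ this is even more immediate since $\Mgot_*(M,\R^n)$ is path connected.
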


In view of \cite[Corollary 1.4]{ForstnericLarusson2016}, the case $n>3$ of Theorem \ref{th:pathconnected} trivially follows 
from the following result.

\begin{theorem}\label{th:nonflat}
Let $M$ be a connected open Riemann surface. Given a flat conformal minimal immersion $X\colon M\to\r^n$ $(n\ge 3)$, 
there exists an isotopy $X_t\colon M\to\r^n$ $(t\in[0,1])$ of conformal minimal immersions such that $X_0=X$ and $X_1$ is nonflat.
\end{theorem}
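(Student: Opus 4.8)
The plan is to give a direct and elementary construction; Theorem \ref{th:main1} is in fact not needed here, because the only obstruction it is designed to remove — prescribing the periods — will be automatically absent.

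\emph{Step 1 (normal form for a flat immersion).} First I would put $X$ into a normal form. Since $X$ is flat, the holomorphic $1$-form $\di X$ takes values in a complex $2$-plane $P_\c\subset\c^n$, so $\di X(M)\subset\Agot_*\cap P_\c$; the set $\Agot_*\cap P_\c$ is a disjoint union of two punctured complex lines, so connectedness of $M$ forces $\di X(M)$ into a single complex line $\c\nu$ with $\nu\in\Agot_*$. Writing $\di X=g\nu$ with $g$ a nowhere vanishing holomorphic $1$-form, and using that $X$ is globally defined on $M$ together with the fact that for a nonzero null vector $\nu$ the only $\lambda\in\c$ with $\Re(\lambda\nu)=0\in\r^n$ is $\lambda=0$, one concludes that $g$ has vanishing periods; hence $g=dw$ for a holomorphic function $w\colon M\to\c$ with $dw$ nowhere zero (so $w$ is nonconstant). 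Since $n\ge 3$, the group $\mathrm{SO}(n)$ is connected and acts transitively on ordered orthonormal $2$-frames of $\r^n$; composing $X$ with a path in $\mathrm{SO}(n)$ from the identity to a rotation carrying $\nu$ to a positive multiple of $(1,-i,0,\dots,0)$ yields an isotopy after which (absorbing a constant into $w$) $\di X=(1,-i,0,\dots,0)\,dw$.

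\emph{Step 2 (the deformation).} For $t\in[0,1]$ I would introduce the holomorphic $1$-form with values in $\c^n$,
\[
\Phi_t=\bigl((1-t^2w^2)\,dw,\ -i(1+t^2w^2)\,dw,\ 2tw\,dw,\ 0,\dots,0\bigr),
\]
and check the following routine facts: $\Phi_0=\di X$; $\sum_j(\Phi_{t,j})^2=0$; $\Phi_t$ vanishes nowhere (its first two components cannot vanish simultaneously, and $dw$ is nowhere zero); and $\Phi_t=dF_t$ is exact, with $F_t=\bigl(w-\tfrac{t^2}{3}w^3,\ -i(w+\tfrac{t^2}{3}w^3),\ tw^2,\ 0,\dots,0\bigr)$. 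Fixing a base point $p_0\in M$, the maps $X_t(p):=X(p_0)+2\int_{p_0}^p\Re\Phi_t$ are then well-defined conformal minimal immersions $M\to\r^n$ with $\di X_t=\Phi_t$ and $X_0=X$, depending real-analytically on $t$; concatenating this family with the rotation isotopy of Step 1 produces the required isotopy. Moreover $X_1$ is nonflat: since $w$ is nonconstant it assumes three distinct values $w_1,w_2,w_3$ at points $p_1,p_2,p_3\in M$, and $\di X_1$ evaluated at these points equals, up to nonzero scalar factors, the vectors $(1-w_j^2,-i(1+w_j^2),2w_j,0,\dots,0)$, which a short Vandermonde-type determinant computation shows are $\c$-linearly independent; hence the $\c$-span of $\di X_1(M)$ has dimension at least $3$, so $X_1$ is not flat.

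\emph{On the difficulty.} There is no serious obstacle; the content is packaged in the explicit family $\Phi_t$, which is nothing but the Weierstrass data of an Enneper-type deformation of the plane. The only points requiring care are the two structural observations in Step 1 — that the span of $\di X$ is exactly one complex line (a connectedness argument) and that $g$ has no periods (because $X$ is globally defined) — together with the elementary verifications in Step 2 (nullity, nonvanishing of $\Phi_t$, and the Vandermonde step). One could instead try to route the argument through Theorem \ref{th:main1}, feeding it a homotopy of maps $M\to\Agot_*$ from $\di X/\theta$ to a full map and then correcting periods; but the fullness hypothesis of that theorem fails at $t=0$ for any such homotopy, so the direct construction above is the clean option.
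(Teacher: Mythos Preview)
Your proof is correct and, in one key respect, more elementary than the paper's. Both arguments use the same Enneper-type deformation: after normalizing so that $\di X=(1,\pm\imath,0,\ldots,0)\phi$ with $\phi$ exact and nowhere vanishing, one sets
\[
\Phi_\lambda=\bigl(1-\lambda^2 g^2,\ \pm\imath(1+\lambda^2 g^2),\ 2\lambda g,\ 0,\ldots,0\bigr)\phi
\]
for a nonconstant holomorphic function $g$ on $M$ with $g\phi$ and $g^2\phi$ exact. The paper produces such a $g$ (with values in $\c_*$) by invoking Proposition \ref{pro:gg2}, which in turn rests on the period-domination Lemmas \ref{lem:gg2-spray}--\ref{lem:gg2-period} and an adaptation of the machinery of Sections \ref{sec:sprays}--\ref{sec:proof}. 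You instead take $g=w$, the primitive of $\phi=dw$; then $w^k\,dw=d\bigl(w^{k+1}/(k+1)\bigr)$ is automatically exact, so no period argument is needed at all. Since $\Phi_\lambda$ remains nowhere vanishing even at the zeros of $w$ (the first two components cannot vanish simultaneously), the paper's requirement that $g$ avoid $0$ is not actually needed for this particular theorem, and your shortcut goes through. What the paper's route buys is the stronger Proposition \ref{pro:gg2} itself, stated for arbitrary admissible sets and arbitrary prescribed periods; your argument is tailored to Theorem \ref{th:nonflat} and does not yield that byproduct. Your closing remark about Theorem \ref{th:main1} is also apt: its fullness hypothesis fails at $t=0$, so it cannot be applied directly here.
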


In dimension $n=3$, we obtain Theorem \ref{th:pathconnected}  by combining \cite[Corollary 1.4]{ForstnericLarusson2016} 
with the following result which shows that every homotopy class of maps $M\to \Agot_*^2$ contains the derivative
of a flat conformal minimal immersion $M\to\R^3$.

\begin{theorem}\label{th:flat-3}
Let $M$ be a connected open Riemann surface and $\theta$ be a nowhere vanishing holomorphic $1$-form on $M$.
For every group homomorphism $\pgot\colon H_1(M;\Z)\to \Z_2$ there exists a flat conformal minimal immersion 
$X\colon M\to\R^3$ satisfying $H_1(\di X/\theta) = \pgot$.
\end{theorem}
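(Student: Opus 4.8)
The plan is to realise $\pgot$ by a flat conformal minimal immersion whose image lies in the coordinate plane $\R^2\times\{0\}\subset\R^3$. Any holomorphic function $h\colon M\to\C$ with nowhere vanishing differential produces such an immersion
\[
	X=2(\Re h,-\Im h,0)\colon M\longrightarrow \R^3 ,
\]
which is conformal and harmonic (as a map into the plane it equals $2\overline h$), is an immersion because $dh$ is nowhere zero, has image in a $2$-plane, and satisfies $\di X=dh\cdot\nu$ with $\nu=(1,\imath,0)\in\Agot_*$; hence $\di X/\theta=(dh/\theta)\,\nu\colon M\to\C_*\nu\subset\Agot^2_*$. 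Under the identification $H_1(\Agot^2_*;\Z)=\Z_2$ recalled in Section \ref{sec:path}, the fibre inclusion $\C_*\hookrightarrow\Agot^2_*$, $\zeta\mapsto\zeta\nu$, induces on $H_1$ the reduction homomorphism $\Z\to\Z_2$ (it is onto since $\pi_1(\CP^1)=0$ in the homotopy exact sequence of the $\C_*$-bundle $\Agot^2_*\to Q_1\cong\CP^1$, and a surjection $\Z\to\Z_2$ is reduction mod $2$), and this is independent of the chosen null direction. Therefore $H_1(\di X/\theta)$ is precisely the mod-$2$ reduction of the winding homomorphism $H_1(M;\Z)\to\Z$ of $dh/\theta\colon M\to\C_*$, and the theorem reduces to producing a critical-point-free holomorphic function $h$ whose $dh/\theta$ has winding homomorphism congruent to $\pgot$ modulo $2$.

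To build such an $h$, I would first pick a lift $\wt\pgot\colon H_1(M;\Z)\to\Z$ of $\pgot$ (possible since $H_1(M;\Z)$ is free) and, using that $\C_*$ is an Oka manifold and that $M$ is homotopy equivalent to a wedge of circles, a holomorphic map $u\colon M\to\C_*$ with winding homomorphism $\wt\pgot$; after multiplying $u$ by a nowhere vanishing null-homotopic factor (for instance $e^{\psi}$ with $\psi$ a non-constant holomorphic function on the non-compact surface $M$) I may assume $u$ is non-constant, i.e.\ that the $\C=\C^1$-valued holomorphic $1$-form $u\theta$ is full in the sense of Theorem \ref{th:main1} (for $n=1$ fullness just means $u$ non-constant). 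Let $\qgot_0\colon H_1(M;\Z)\to\C$ be the period homomorphism $\qgot_0(\gamma)=\int_\gamma u\theta$. Now apply Theorem \ref{th:main1} with $n=1$, the constant family $\Phi_t\equiv u\theta$ and the homotopy $\qgot_t=(1-t)\qgot_0$: since $\int_\gamma 1\cdot u\theta=\qgot_0(\gamma)$, the last assertion of Theorem \ref{th:main1} provides a continuous family of holomorphic functions $h_t\colon M\to\C_*$ with $h_0\equiv 1$ and $\int_\gamma h_t\,u\theta=\qgot_t(\gamma)$ for all closed curves $\gamma$ and all $t$. In particular $\int_\gamma h_1 u\theta=0$ for every $\gamma$, so $\omega:=h_1 u\theta$ is a nowhere vanishing exact holomorphic $1$-form, and $h(p):=\int_{p_0}^p\omega$ is a well-defined critical-point-free holomorphic function with $dh=\omega$.

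Finally, since $t\mapsto h_t$ is a homotopy of maps $M\to\C_*$ joining $h_1$ to the constant map $h_0\equiv 1$, the map $h_1$ is null-homotopic, so its winding homomorphism vanishes; hence the winding homomorphism of $dh/\theta=\omega/\theta=h_1u$ equals that of $u$, namely $\wt\pgot$, whose reduction mod $2$ is $\pgot$. Thus $X=2(\Re h,-\Im h,0)$ is a flat conformal minimal immersion $M\to\R^3$ with $H_1(\di X/\theta)=\pgot$, which proves the theorem.

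I expect the only delicate point to be the following. A naive application of Theorem \ref{th:main1} to turn $u\theta$ into an exact form would furnish a multiplier of uncontrolled homotopy class, which would corrupt the winding computation. The remedy is to run Theorem \ref{th:main1} along a homotopy of period homomorphisms that starts at the actual period homomorphism $\qgot_0$ of $u\theta$; the ``furthermore'' clause then forces $h_0\equiv 1$, and continuity of the family forces the exactifying multiplier $h_1$ to be null-homotopic. (Alternatively one could invoke the parametric h-principle for critical-point-free holomorphic functions on open Riemann surfaces, whose path components are detected by the winding homomorphism of the derivative, but the route above uses only results established in the present paper.)
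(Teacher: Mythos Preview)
Your proof is correct and follows essentially the same route as the paper: lift $\pgot$ to a winding homomorphism, realise it by a holomorphic map into $\C_*\cdot(1,\imath,0)\subset\Agot^2_*$, then use a null-homotopic multiplier to make the resulting $1$-form exact and integrate. You are in fact more careful than the paper's own write-up in two places: you invoke Theorem~\ref{th:main1} (the paper's citation of Lemma~\ref{lem:periods} is a slip, since that lemma lives on the interval), and you explicitly secure the fullness hypothesis for $n=1$ and the null-homotopy of $h_1$ via the ``furthermore'' clause.
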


We begin with some preparations.  Set $I:=[0,1]$. For any continuous function $a\colon I\to\c$ we denote by 
$\Pscr^a\colon \Cscr(I,\c)\to\c^2$ the period map given by
\[
      \Pscr^a(f)=\int_0^1 a(s)(f(s),f(s)^2)\, ds,\quad f\in \Cscr(I,\c).
\]

\begin{lemma}\label{lem:gg2-spray}
Let $I'$ be a nontrivial closed subinterval of $I = [0,1]$ and let $f\colon I\to\c$ and $a\colon I\to\c_*$ be continuous functions 
such that $f$ is not constant on $I'$. There exist finitely many continuous functions $g_1,\ldots, g_N\colon I\to\c$ $(N \ge n)$, 
supported on $I'$, such that the function $h\colon \c^N\times I\to \c$ given by
\[
       h(\zeta,s):=\prod_{i=1}^N \big(1+\zeta_i g_i(s)\big),\quad \zeta=(\zeta_1,\ldots,\zeta_N)\in\c^N,\; s\in I
\]
is such that the map
\[
    \frac{\di}{\di\zeta} \Pscr^a(h(\zeta,\cdot)f)\big|_{\zeta=0} : T_0\c^N\cong\c^N\lra \c^2\quad \text{is surjective}.
\]
\end{lemma}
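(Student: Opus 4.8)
The plan is to mimic the proof of Lemma~\ref{lem:spray-loops}, taking $N=2$ (or any larger $N$ by adjoining copies of the zero function), choosing the $g_i$ to be bump functions concentrated near two well-chosen points of $\mathring{I'}$, and reading off the desired surjectivity from a nonvanishing $2\times2$ determinant. The one genuinely new ingredient is to locate two parameter values at which the ``moment vectors'' $(f(s),f(s)^2)$ span $\c^2$; this is where the hypothesis that $f$ is nonconstant on $I'$ is used, and it is the only step requiring an argument.

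First I would note that $f$ is nonconstant also on the interior $\mathring{I'}$ (otherwise, by continuity, $f$ would be constant on the closure $I'$ of $\mathring{I'}$), and claim that there exist $s_1,s_2\in\mathring{I'}$ for which the vectors $(f(s_j),f(s_j)^2)\in\c^2$ $(j=1,2)$ are linearly independent. Indeed, were this false, then for all $s,t\in\mathring{I'}$ the determinant $f(s)f(t)^2-f(t)f(s)^2=f(s)f(t)\bigl(f(t)-f(s)\bigr)$ would vanish. Since $f$ is not $\equiv 0$ on $\mathring{I'}$, there is a point $s_0$ with $f(s_0)=:c\neq 0$; taking $s=s_0$ gives $f(t)\bigl(f(t)-c\bigr)=0$ for all $t\in\mathring{I'}$, so $f$ would take values only in the two-point set $\{0,c\}$ with $c\neq 0$, and by connectedness of $\mathring{I'}$ it would be constant there, a contradiction. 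Hence the claim holds; in particular both vectors are nonzero, so $f(s_1)\neq 0$, $f(s_2)\neq 0$ and $f(s_1)\neq f(s_2)$, equivalently $f(s_1)f(s_2)\bigl(f(s_2)-f(s_1)\bigr)\neq 0$.

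Next I would fix $\epsilon>0$ so small that the intervals $[s_i-\epsilon,s_i+\epsilon]$ $(i=1,2)$ are disjoint and contained in $I'$, and pick continuous $g_i\colon I\to\c$ supported in $(s_i-\epsilon,s_i+\epsilon)$ with $\int_0^1 g_i(s)\,ds=1$. As in Lemma~\ref{lem:spray-loops} one has $\di_{\zeta_i}h(\zeta,s)\big|_{\zeta=0}=g_i(s)$, and differentiating $\Pscr^a(h(\zeta,\cdot)f)=\int_0^1 a(s)\bigl(h(\zeta,s)f(s),(h(\zeta,s)f(s))^2\bigr)\,ds$ at $\zeta=0$ yields
\[
   \frac{\di}{\di\zeta_i}\,\Pscr^a\bigl(h(\zeta,\cdot)f\bigr)\Big|_{\zeta=0}=\int_0^1 a(s)\,g_i(s)\,\bigl(f(s),\,2f(s)^2\bigr)\,ds ,
\]
which converges to $a(s_i)\bigl(f(s_i),2f(s_i)^2\bigr)$ as $\epsilon\to 0$. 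Since $a(s_1),a(s_2)\in\c_*$ and $f(s_1)f(s_2)(f(s_2)-f(s_1))\neq 0$, the $2\times2$ matrix with rows $a(s_i)(f(s_i),2f(s_i)^2)$ has determinant $2a(s_1)a(s_2)f(s_1)f(s_2)\bigl(f(s_2)-f(s_1)\bigr)\neq 0$, so these two limit vectors span $\c^2$. As spanning is an open condition, for $\epsilon$ small enough the two vectors $\frac{\di}{\di\zeta_i}\Pscr^a(h(\zeta,\cdot)f)|_{\zeta=0}$ already span $\c^2$, so $\frac{\di}{\di\zeta}\Pscr^a(h(\zeta,\cdot)f)|_{\zeta=0}\colon T_0\c^N\cong\c^N\to\c^2$ is surjective, as desired. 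The main obstacle is thus the linear independence claim of the second paragraph — an $(f,f^2)$ analogue of Vandermonde nonvanishing — after which the argument is essentially a verbatim copy of the proof of Lemma~\ref{lem:spray-loops}.
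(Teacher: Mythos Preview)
Your proof is correct and follows essentially the same approach as the paper's: pick points $s_i\in\mathring{I'}$ at which the vectors $(f(s_i),2f(s_i)^2)$ span $\c^2$, then use bump functions concentrated near these points exactly as in Lemma~\ref{lem:spray-loops}, invoking $a\ne 0$ at the end. The paper's proof simply asserts the existence of such points from the nonconstancy of $f$ on $I'$, whereas you supply the explicit Vandermonde-type argument; otherwise the two proofs coincide.
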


\begin{proof}
As in the proof of Lemma \ref{lem:spray-loops} we pick an integer $N\ge 2$ and continuous functions $g_1,\ldots,g_N\colon I\to\c$, 
supported on $I'$, which will be specified later, and define $h$ as in \eqref{eq:h-spray}. 
Let  $\zeta=(\zeta_1,\ldots,\zeta_N)$ be holomorphic coordinates in $\c^N$.
Consider the period map $\Psf\colon \c^N\to\c^2$ given by
\[
      \Psf(\zeta)=\Pscr^a(h(\zeta,\cdot)f)=\int_0^1 a(s) \left(h(\zeta,s)f(s),h(\zeta,s)^2f(s)^2\right) ds,\quad \zeta\in\c^N.
\]
Equation \eqref{eq:dih} gives
\[
     \left.\frac{\di \Psf(\zeta)}{\di\zeta_i}\right|_{\zeta=0}= \int_0^1 a(s)\left(g_i(s)f(s),2g_i(s)f(s)^2\right) ds,\quad i\in\{1,\ldots,N\},
\]
Since $f$ is continuous and nonconstant on the interval $I'$, there are points $s_1,\ldots,s_N\in \mathring I'$ such that  
\[
	\span \left\{(f(s_i),2f(s_i)^2) : i=1,\ldots,N\right\} = \c^2. 
\]
Reasoning as in the proof of Lemma \ref{lem:spray-loops}, taking 
into account that the function $a$ has no zeros on $I$, we conclude the proof by suitably choosing the functions $g_i$ with support in a 
small neighborhood of $s_i$ in $I'$.
\end{proof}

\begin{lemma}\label{lem:gg2-period}
Let $f\colon I\to \c$ and $a\colon I\to\c_*$ be continuous functions and assume that $f$ is not constant. Also let $x_1,x_2\in\c$ 
be complex numbers. Then there exists a continuous function $h\colon I\to \c_*$ such that $h(s)=1$ for $s\in  \{0,1\}$ and
\[
	\int_0^1 a(s)\left(h(s) f(s),h(s)^2f(s)^2\right) ds = (x_1,x_2).
\]
\end{lemma}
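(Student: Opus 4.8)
The plan is to follow the two-step scheme of the proof of Lemma~\ref{lem:periods}: an approximate statement, followed by a period-domination correction in which Lemma~\ref{lem:gg2-spray} plays the role that Lemma~\ref{lem:spray-loops} plays there. For a nondegenerate closed subinterval $J\subseteq I$ write $\Pscr^a_J(g)=\int_J a(s)(g(s),g(s)^2)\,ds\in\c^2$, so the goal is $\Pscr^a_I(hf)=(x_1,x_2)$ with $h(s)=1$ at $s\in\{0,1\}$. First I would establish the approximate version: if $f$ is nonconstant on $J$, then for every $(y_1,y_2)\in\c^2$ and every $\epsilon>0$ there is a continuous $h\colon J\to\c_*$ equal to $1$ at the two endpoints of $J$ with $|\Pscr^a_J(hf)-(y_1,y_2)|<\epsilon$. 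To see this, apply Lemma~\ref{lem:gg2-spray} with $I'=J$, taking the $g_i$ supported in the interior of $J$; the spray $h(\zeta,\cdot)=\prod_{i=1}^N(1+\zeta_i g_i)$ then satisfies $h(0,\cdot)\equiv1$, equals $1$ near $\partial J$, and $\zeta\mapsto\Pscr^a_J(h(\zeta,\cdot)f)$ is a polynomial map $\c^N\to\c^2$ whose differential at $\zeta=0$ is surjective, hence a dominant polynomial map with dense image in $\c^2$. Moreover $h(\zeta,\cdot)$ is zero-free on $J$ for all $\zeta$ outside a nowhere dense subset of $\c^N$. Choosing $\zeta$ in the intersection of this zero-free set with the (open and nonempty) preimage of the $\epsilon$-ball about $(y_1,y_2)$ yields the desired $h:=h(\zeta,\cdot)$.

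To upgrade to the exact statement, I would first note that, since $f$ is continuous and nonconstant on $I$, comparing $\sup\{t\in I:f|_{[0,t]}\text{ is constant}\}$ with $\inf\{t\in I:f|_{[t,1]}\text{ is constant}\}$ shows that there is a point $p\in(0,1)$ such that $f$ is nonconstant on both $I_1=[0,p]$ and $I_2=[p,1]$. Apply Lemma~\ref{lem:gg2-spray} with $I'=I_2$ to obtain a spray $h(\zeta,\cdot)$ supported in the interior of $I_2$ with $h(0,\cdot)\equiv1$ such that the induced period map $\zeta\mapsto\Pscr^a_{I_2}(h(\zeta,\cdot)f)$ is a submersion at $\zeta=0$; by the implicit function theorem its image contains a neighbourhood $U$ of $\Pscr^a_{I_2}(f)$, and $h(\zeta,\cdot)$ is zero-free on $I_2$ for $\zeta$ near $0$. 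Next, apply the approximate version on $J=I_1$ with target $(x_1,x_2)-\Pscr^a_{I_2}(f)$, producing a continuous $h_1\colon I_1\to\c_*$ equal to $1$ at $0$ and at $p$ with $\Pscr^a_{I_1}(h_1 f)$ so close to that target that $(x_1,x_2)-\Pscr^a_{I_1}(h_1 f)\in U$. Finally choose $\zeta$ near $0$ with $\Pscr^a_{I_2}(h(\zeta,\cdot)f)=(x_1,x_2)-\Pscr^a_{I_1}(h_1 f)$, and define $h$ to equal $h_1$ on $I_1$ and $h(\zeta,\cdot)$ on $I_2$; the two pieces both take the value $1$ at the junction $p$, so $h\colon I\to\c_*$ is continuous, equals $1$ at $\{0,1\}$, and satisfies $\Pscr^a_I(hf)=\Pscr^a_{I_1}(h_1 f)+\Pscr^a_{I_2}(h(\zeta,\cdot)f)=(x_1,x_2)$.

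The main difficulty, relative to Lemma~\ref{lem:periods}, is that $h\mapsto\Pscr^a_I(hf)$ is quadratic rather than linear in $h$, so the periods cannot be prescribed by solving a linear system; this is precisely why the argument must route through the spray of Lemma~\ref{lem:gg2-spray} and the implicit function theorem, and why one needs the splitting $I=I_1\cup I_2$ with $f$ nonconstant on each half, so that the span condition $\span\{(f(s_i),2f(s_i)^2)\}=\c^2$ underlying Lemma~\ref{lem:gg2-spray} is available both for the far-reaching approximation on $I_1$ and for the local correction on $I_2$. The only genuinely new ingredient beyond the machinery already set up is the elementary interval lemma that produces the splitting point $p$.
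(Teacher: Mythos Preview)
Your proof is correct and shares the paper's overall two-step scheme (approximate solution, then correction via the period-dominating spray of Lemma~\ref{lem:gg2-spray} and the implicit function theorem), but your approximate step is genuinely different. The paper builds the approximating multiplier by hand: it picks points $s_1,\dots,s_N\in\mathring I$ at which the nonlinear map $(y_1,\dots,y_N)\mapsto\sum_i a(s_i)\bigl(y_if(s_i),y_i^2f(s_i)^2\bigr)$ is surjective, chooses $y_i\in\c_*$ hitting the scaled target $(x_1,2\tau x_2)$, and sets $h\equiv y_i/(2\tau)$ on a window of width $2\tau$ about each $s_i$ and $|h|$ small elsewhere; the $2\tau$ scaling is precisely the trick that reconciles the linear and quadratic components of the integral. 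You instead recycle Lemma~\ref{lem:gg2-spray} itself: the period map $\zeta\mapsto\Pscr^a_J\bigl(h(\zeta,\cdot)f\bigr)$ is a polynomial map $\c^N\to\c^2$ submersive at $0$, hence dominant with Euclidean-dense image, and a generic $\zeta$ in the preimage of a small ball about the target keeps $h(\zeta,\cdot)$ zero-free. This is slicker and avoids the explicit construction, at the price of invoking the fact that a dominant polynomial map between affine spaces has dense image. One small caveat: your claim that the bad set $\{\zeta:h(\zeta,\cdot)\ \text{vanishes somewhere on}\ J\}$ is nowhere dense needs the $g_i$ to be mildly regular (real-valued, or $C^1$), since for merely continuous $g_i$ the curve $s\mapsto -1/g_i(s)$ could in principle be space-filling; this is harmless, as the proof of Lemma~\ref{lem:gg2-spray} lets you choose the $g_i$ to be bump functions. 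Finally, your explicit construction of the splitting point $p$ with $f$ nonconstant on both $[0,p]$ and $[p,1]$ is more careful than the paper, which leaves this to the reader via its reference to the proof of Lemma~\ref{lem:periods} (where the stronger nowhere-flat hypothesis made any splitting point work).
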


\begin{proof}
As in the proof of Lemma \ref{lem:periods}, and in view of Lemma \ref{lem:gg2-spray}, it suffices to prove that for any $\epsilon>0$ 
there exists a function $h\colon I\to \c_*$ such that
\begin{equation}\label{eq:approximate1}
	\left| \int_0^1 a(s)\left(h(s) f(s),h(s)^2f(s)^2\right) ds - (x_1,x_2)\right| <\epsilon, \quad t\in[0,1].
\end{equation}
To construct such a function $h$ we reason as follows. 
Since $a$ vanishes nowhere on $I$ and $f$ is continuous and nonconstant, there exist a big integer $N\in\n$ and numbers 
$0<s_1<\cdots<s_N<1$ such that the map $\c^N\to \c^2$ given by
\[
	(y_1,\ldots,y_N)\longmapsto \sum_{i=1}^N a(s_i)\left(y_if(s_i),y_i^2f(s_i)^2\right)
\] 
is surjective. Fix numbers $\tau>0$ and $0<\epsilon'<\epsilon$ which will be specified later, 
and choose numbers $y_1,\ldots,y_N\in\c_*$ such that 
\begin{equation}\label{eq:'1}
     \left|\sum_{i=1}^N a(s_i) \left(y_if(s_i),y_i^2f(s_i)^2\right)- (x_1,2\tau x_2\big)\right|<\epsilon'.
\end{equation}
Given a constant $\eta>0$ to be specified later, we let $h\colon I\to\c_*$ be a continuous function satisfying 
$h(0)=h(1)=1$  and also the following conditions:
\begin{enumerate}[\rm (a)]
\item $|h(s)|\le 1$ for $s\in[0,\eta] \cup [1-\eta,1]$,
\vspace*{1mm}
\item $\displaystyle h(s)=\frac{y_i}{2\tau}$ for $s\in [s_i-\tau,s_i+\tau]$, $i=1,\ldots,N$,
\vspace*{1mm}
\item $\displaystyle  |h(s)|\le \big|\frac{y_i}{2\tau}\big|$ for $s\in [s_i-\tau-\eta,s_i-\tau]\cup[s_i+\tau,s_i+\tau+\eta]$, $i=1,\ldots,N$, and
\vspace*{1mm}
\item $|h(s)|\le \eta$ for $s\in [\eta,s_1-\tau-\eta]\cup \big(\bigcup_{i=1}^{N-1} [s_i+\tau+\eta,s_{i+1}-\tau-\eta]\big)\cup [s_N+\tau+\eta,1-\eta]$.
\end{enumerate}
We choose $\tau$ and $\eta$ sufficiently small so that the intervals in {\rm (d)} are nonempty, pairwise disjoint and contained in 
$\mathring I=(0,1)$. Furthermore, if $\tau>0$ is chosen small enough then condition {\rm (b)} ensures that
the following estimate holds for each $i=1,\ldots,N$:
\begin{equation}\label{eq:'2}
    \left| \int_{s_i-\tau}^{s_i+\tau} a(s) \left(h(s) f(s),h(s)^2f(s)^2\right) ds - a(s_i)(y_if(s_i),\frac{y_i^2}{2\tau} f(s_i)^2)\right|<\epsilon'.
\end{equation}
On the other hand, if $\eta>0$ is sufficiently small then  {\rm (a)}, {\rm (c)}, and {\rm (d)} guarantee that
\begin{multline}\label{eq:'3}
    \left|\int_0^{s_1-\tau} a(s)\left(h(s) f(s),h(s)^2f(s)^2\right) ds\right| 
    \\
    + \left|\int_{s_N+\tau}^1 a(s)\left(h(s) f(s),h(s)^2f(s)^2\right) ds\right|
    \\
    + \sum_{i=1}^{N-1}\left|\int_{s_i+\tau}^{s_{i+1}-\tau} a(s)\left(h(s) f(s),h(s)^2f(s)^2\right) ds\right| 
     <\epsilon'.
\end{multline}
Choosing $\epsilon'<\epsilon/(N+2)$, inequalities \eqref{eq:'1}, \eqref{eq:'2}, and \eqref{eq:'3} yield \eqref{eq:approximate1}, 
which concludes the proof.
\end{proof}

With Lemmas \ref{lem:gg2-spray} and \ref{lem:gg2-period} in hand, 
one may easily adapt the arguments in Sections \ref{sec:sprays} and \ref{sec:proof} in order to prove the following proposition.

\begin{proposition}\label{pro:gg2}
Let $M$ be an open Riemann surface, $\qgot\colon H_1(M;\z)\to\c^2$  be a group homomorphism, and $\theta$ be a holomorphic 
$1$-form vanishing nowhere on $M$. Also let $S=K\cup\Gamma\subset M$ be an admissible subset (see Definition \ref{def:admissible}) 
and $u\colon S\to\c_*$ be a function of class $\Ascr(S)$ such that
\[
     \int_\gamma (u,u^2)\theta = \qgot(\gamma)\quad \text{for every closed curve $\gamma\subset S$}.
\] 
Then $u$ may be approximated uniformly on $S$ by nowhere vanishing holomorphic functions $g\colon M\to\c_*$ such that
\[
      \int_\gamma (g,g^2)\theta=\qgot(\gamma)\quad \text{for every closed curve $\gamma\subset M$}.
\]
\end{proposition}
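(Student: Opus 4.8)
## Proof proposal for Proposition \ref{pro:gg2}

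The plan is to mimic the two-stage inductive scheme used for Theorem \ref{th:main}, replacing the linear period map $h\mapsto \int_\gamma hf\theta$ by the quadratic period map $u\mapsto \int_\gamma (u,u^2)\theta$, and replacing Lemmas \ref{lem:spray-loops} and \ref{lem:periods} by their just-proved analogues, Lemmas \ref{lem:gg2-spray} and \ref{lem:gg2-period}. First I would set up a noncritical lemma: given an admissible set $S=K\cup\Gamma$ which is a deformation retract of a slightly larger smoothly bounded Runge domain $L$, and a function $u\in\Ascr(S,\c_*)$, produce $\wt u\in\Ascr(L,\c_*)$ approximating $u$ on $S$ with $(\wt u,\wt u^2)\theta - (u,u^2)\theta$ exact on $S$. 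As in the proof of Lemma \ref{lem:noncritical}, the mechanism is: spray $u$ by a multiplier $\Xi(\zeta,p)=\prod_{i=1}^N(1+\zeta_i g_i(p))$ with $g_i\in\Oscr(M)$ obtained from Lemma \ref{lem:gg2-spray} (applied along the arcs of a Runge homology basis $C_1,\dots,C_l$ of $S$, exactly as in Lemma \ref{lem:existence-sprays}), so that $\zeta\mapsto\bigl(\int_{C_j}(\Xi(\zeta,\cdot)u,\Xi(\zeta,\cdot)^2u^2)\theta\bigr)_{j=1}^l$ is a submersion at $\zeta=0$ onto $(\c^2)^l$; then Mergelyan-approximate $u$ by a holomorphic $\phi$ on $L$ (using that $\c_*$ is Oka and $S$ is a deformation retract of $L$) and solve for $\zeta=\beta\in\c^N$ small so that $\Xi(\beta,\cdot)\phi$ has exactly the original periods on each $C_j$. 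One subtlety here compared to the linear case: the relevant "image span" condition is that $\{(f(s_i),2f(s_i)^2)\}$ spans $\c^2$, which holds as soon as $u\circ\gamma_j$ is nonconstant on a subarc; since $u$ is nowhere vanishing and holomorphic, nonconstancy on each relevant arc is automatic unless $u$ is locally constant there, and one can always perturb $u$ slightly on $S$ first to arrange this — this is where I expect to have to be mildly careful.

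Next I would set up the critical lemma, handling a single Morse critical point of a strongly subharmonic exhaustion $\rho$ as $\rho$ crosses a critical level between $K=\{\rho\le a\}$ and $L=\{\rho\le b\}$. In the index-$0$ case a new simply connected component $D$ appears; we apply the noncritical lemma on $L\setminus D$ and extend $u$ arbitrarily (as an $\Ascr(D,\c_*)$ map) over $D$, since $D$ carries no new homology. In the index-$1$ case we attach a Jordan arc $\Gamma$ so that $K\cup\Gamma$ is admissible and a deformation retract of $L$; if $\Gamma$ closes up a new cycle $\Gamma_0$ we use Lemma \ref{lem:gg2-period} to choose a continuous $h\colon\Gamma\to\c_*$ with $\int_{\Gamma_0}(u,u^2)\theta=\qgot(\Gamma_0)$, glue $u$ and $h$ to a function $\psi$ of class $\Ascr(K\cup\Gamma,\c_*)$, and then invoke the noncritical lemma. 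The only genuine input needed from Lemma \ref{lem:gg2-period} is exactly what it provides: prescribing an arbitrary pair of values in $\c^2$ for the integral $\int_\Gamma a(s)(h f, h^2 f^2)\,ds$ along one arc (with $f$ the local coordinate form of whatever plays the role of "$f$" — here, after pulling back by a parametrization $\gamma$ of $\Gamma$, the integrand is built from $u\circ\gamma$, which is nonconstant after the preliminary perturbation, and $a = \theta(\gamma,\dot\gamma)$, which is nowhere zero since $\theta$ is nonvanishing).

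Finally I would run the exhaustion argument of the proof of Theorem \ref{th:main}: exhaust $M$ by smoothly bounded Runge domains $S = M_0\Subset M_1\Subset\cdots$ chosen so that each $M_j\setminus M_{j-1}$ contains at most one critical point of $\rho$, and apply the noncritical and critical lemmas alternately to produce $u_j\in\Ascr(M_j,\c_*)$ with $\int_\gamma(u_j,u_j^2)\theta=\qgot(\gamma)$ for every closed curve $\gamma\subset M_j$ and with $u_j$ approximating $u_{j-1}$ as closely as we wish on $M_{j-1}$. Taking approximations fast enough, $g:=\lim_j u_j$ is a holomorphic map $M\to\c$; uniform closeness on each $M_{j-1}$ keeps it nowhere zero, so $g\colon M\to\c_*$, it approximates $u$ on $S$, and its periods are correct on all of $M$.

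I expect the main obstacle to be purely bookkeeping: verifying that the quadratic period map behaves well enough under the spray construction that Lemma \ref{lem:gg2-spray} really does give period domination uniformly along a homology basis (the submersivity is a genericity statement in the points $s_i$ and support sizes, identical in spirit to Lemma \ref{lem:existence-sprays}), and arranging the preliminary perturbation of $u$ so that $u$ is nonconstant along every arc that enters a spray or a period-prescription step, without destroying the approximation or the already-achieved period values. None of this is deep; it is exactly the adaptation the paragraph preceding the proposition promises, so the proof will read "one may easily adapt the arguments of Sections \ref{sec:sprays} and \ref{sec:proof}" followed by a short indication of the two nonobvious points above.
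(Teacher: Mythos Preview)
Your proposal is correct and follows exactly the approach the paper itself takes: the paper does not give a detailed proof but simply states that with Lemmas \ref{lem:gg2-spray} and \ref{lem:gg2-period} in hand one may easily adapt the arguments in Sections \ref{sec:sprays} and \ref{sec:proof}, noting only that the function $a(s)$ plays the role of $\theta(\gamma_j(s),\dot\gamma_j(s))$ in the analogue of Lemma \ref{lem:existence-sprays}. You have recovered precisely this outline, including the point about $a(s)$, and your remarks on the minor nonconstancy bookkeeping are reasonable caveats that the paper does not bother to mention.
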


We point out that, when using Lemmas \ref{lem:gg2-spray} and \ref{lem:gg2-period} in order to prove an analogue of Lemma 
\ref{lem:existence-sprays} in the current context, the role of the function $\theta(\gamma_j(s),\dot\gamma_j(s))$ in the proof of that 
lemma is played by the function $a(s)$ in Lemmas \ref{lem:gg2-spray} and \ref{lem:gg2-period}. 
We leave the details of the proof of Proposition \ref{pro:gg2} to the interested reader.

%
%

\begin{proof}[Proof of Theorem \ref{th:nonflat}]
Clearly it suffices to prove the theorem for $n=3$. Let $X\colon M\to\r^3$ be a flat conformal minimal immersion. Without loss of generality 
we may assume that $\di X=(1,\imath,0)\phi_3$ where $\phi_3$ is an exact holomorphic $1$-form vanishing nowhere on $M$.
Choose a nonconstant holomorphic function $g\colon M\to\c_*$ such that $g\phi_3$ and $g^2\phi_3$ are exact $1$-forms 
on $M$; the existence of such $g$ is ensured by Proposition \ref{pro:gg2}. Set
\[
	\Phi_\lambda= \left(1- \lambda^2 g^2,\imath(1+\lambda^2 g^2), 2\lambda g\right) \phi_3, \quad \lambda\in \C.
\]
Note that $\Phi_\lambda$ is an exact holomorphic $1$-form and the map $\Phi_\lambda/\phi_3$ assumes values in the punctured 
null quadric $\Agot_*\subset\c^3$ \eqref{eq:nullquadric} for every $\lambda\in\c$. Thus, fixing a base point $p_0\in M$, 
every $\Phi_\lambda$ provides a conformal minimal immersion $X_\lambda\colon M\to\r^3$ by the formula
\[
      X_\lambda(p)=X(p_0)+2\int_{p_0}^p \Re(\Phi_\lambda),\quad p\in M.
\]
Note that the Gauss map of $X_\lambda$ equals $2\lambda g$ (cf.\ \eqref{eq:EWR}).
Since $\Phi_0=\di X$ and $g$ is nonconstant, we have that $X_0=X$ and $X_1$ is nonflat, and hence the isotopy $X_t\colon M\to\r^n$ 
$(t\in[0,1])$ satisfies the conclusion of the theorem.
\end{proof}

\begin{proof}[Proof of Theorem \ref{th:flat-3}]
Let $\Agot_*\subset \C^3$ be as above (see \eqref{eq:nullquadric}). Fix a  group homomorphism $\pgot \colon H_1(M;\Z) \to \Z_2$.
Choose a continuous map $g\colon M\to \C_*$ such that for every generator $\gamma$ of $H_1(M;\Z)$
we have that $H_1(g)(\gamma)=0\in\Z$ if $\pgot(\gamma)=0\in \Z_2$, and $H_1(g)(\gamma)=1\in\Z$
if $\pgot(\gamma)=1\in \Z_2$. By the Oka principle we can assume that $g$ is holomorphic.
Identifying $\C_*$ with the ray $\C_*\cdotp (1,\igot,0)\subset \Agot_*$, the generator of $H_1(\C_*;\Z)=\Z$ 
maps to the generator of $H_1(\Agot_*;\Z)=\Z_2$, and hence we have that $H_1((1,\imath,0)g)=\pgot \colon H_1(M;\Z) \to \Z_2$.

Pick a nowhere vanishing holomorphic $1$-form $\theta$ on $M$. Lemma \ref{lem:periods} furnishes
a holomorphic function $h\colon M\to\C_*$, homotopic to the constant map $M\to 1$ through maps
$M\to\C_*$, such that $\int_\gamma gh\theta = 0$ holds for every closed curve $\gamma$ in $M$.
Set $\Phi=(1,\imath ,0) hg\theta$; clearly this is an exact holomorphic $1$-form on $M$ with values in $\C^3$, 
the map $f=\Phi/\theta =(1,\imath,0)gh$ assumes values in the ray $\C_*\cdotp (1,\igot,0)\subset \Agot_*$, 
and $H_1(f)=\pgot \colon H_1(M;\Z)\to \Z_2$.
Hence, fixing a point $p_0\in M$, the map $X\colon M\to \R^3$ defined by 
$X(p)= 2\int_{p_0}^p  \Re(\Phi)$ $(p\in M)$ defines a flat conformal minimal immersion 
such that $\di X=\Phi$ and hence $H_1(\di X/\theta)=\pgot$. This completes the proof.
\end{proof}


\subsection*{Acknowledgements}
A.\ Alarc\'on is supported by the Ram\'on y Cajal program of the Spanish Ministry of Economy and Competitiveness.
A.\ Alarc\'on and F.\ J.\ L\'opez are partially supported by the MINECO/FEDER grant no. MTM2014-52368-P, Spain. 
F.\ Forstneri\v c is partially  supported  by the research program P1-0291 and grants J1-5432 and J1-7256 from 
ARRS, Republic of Slovenia. A part of the work on this paper was done while F.\ Forstneri\v c was visiting the 
Department of Geometry and Topology of the  University of Granada, Spain, in March 2016. He wishes to thank this institution for 
the invitation and hospitality.




\vspace*{0.5cm}
\noindent Antonio Alarc\'{o}n

\noindent Departamento de Geometr\'{\i}a y Topolog\'{\i}a e Instituto de Matem\'aticas (IEMath-GR), Universidad de Granada, Campus de Fuentenueva s/n, E--18071 Granada, Spain.

\noindent  e-mail: {\tt alarcon@ugr.es}

\vspace*{0.3cm}
\noindent Franc Forstneri\v c

\noindent Faculty of Mathematics and Physics, University of Ljubljana, Jadranska 19, 1000 Ljubljana, Slovenia; 

\noindent Institute of Mathematics, Physics and Mechanics, Jadranska 19, 1000 Ljubljana, Slovenia.

\noindent e-mail: {\tt franc.forstneric@fmf.uni-lj.si}

\vspace*{0.3cm}
\noindent Francisco J.\ L\'opez

\noindent Departamento de Geometr\'{\i}a y Topolog\'{\i}a e Instituto de Matem\'aticas (IEMath-GR), Universidad de Granada, Campus de Fuentenueva s/n, E--18071 Granada, Spain

\noindent  e-mail: {\tt fjlopez@ugr.es}

\end{document}